\definecolor{p}{rgb}{1, 0, 0}
\definecolor{xin}{rgb}{0,1,0}
\definecolor{pp}{rgb}{1, 0, 1}
\newtheorem{theorem}{Theorem}[section]
\newtheorem{corollary}[theorem]{Corollary} 
\newtheorem{lemma}[theorem]{Lemma}
\newtheorem{proposition}[theorem]{Proposition}
\newtheorem{exam}[theorem]{Example}
\newtheorem{example}[theorem]{Example}
\numberwithin{equation}{section}
\newtheorem{definition}[theorem]{Definition}
\newtheorem{remark}[theorem]{Remark}
\newcommand\blfootnote[1]{%
  \begingroup
  \renewcommand\thefootnote{}\footnote{#1}%
  \addtocounter{footnote}{-1}%
  \endgroup
}
\newcommand{\supp}{{\operatorname*{supp}}}
\newcommand{\es}{{\operatorname*{es}}}
\newcommand{\sspan}{{\operatorname*{span}}}
\newcommand{\Es}{{\operatorname*{Es}}}
\newcommand{\R}{\mathbb{R}}
\newcommand{\C}{\mathbb{C}}
\newcommand{\Z}{\mathbb{Z}}
\title[Simplices in Newton-Okounkov bodies and the Gromov width]{Simplices in Newton-Okounkov bodies and the Gromov width of coadjoint orbits}
\author{Xin Fang, Peter Littelmann, Milena Pabiniak}
\address{\newline Mathematisches Institut, Universit\"at zu K\"oln, Cologne, Germany} 
\email{xinfang.math@gmail.com}
\email{peter.littelmann@math.uni-koeln.de}
\email{pabiniak@math.uni-koeln.de}
\begin{document}
\maketitle
\begin{abstract}
 We give a uniform proof for the conjectured Gromov width of rational coadjoint orbits of all compact connected simple Lie groups, 
 by analyzing simplices in Newton-Okounkov bodies.
\end{abstract}

\blfootnote{\textup{2010} \textit{Mathematics Subject Classification}: \textup{ 53D05, 14D06, 14M15, 17B10}} 
\section{Introduction}
Let $\omega_{st}$ be the standard symplectic form on $\R^{2n}$. The non-squeezing theorem of Gromov affirms that a ball $B^{2n}(r)\subset (\R^{2n}, \omega_{st})$ 
cannot be symplectically embedded into $B^2(R)\times \R^{2n-2}\subset  (\R^{2n}, \omega_{st})$ unless $r\leq R$.
This result motivated the quest for the largest ball that could be symplectically embedded into a given symplectic manifold $(M, \omega)$.
The {\it Gromov width} of a $2n$-dimensional symplectic manifold $(M,\omega)$
is the supremum of the set of $a$'s such that the ball of {\it capacity} $a$ (radius $\sqrt{\frac{a}{\pi}}$),
$$ B^{2n}_a = \big \{ (x_1,y_1,\ldots,x_n,y_n) \in  \R^{2n} ; \ \pi \sum_{i=1}^n (x_i^2+y_i^2) < a \big 
\} \subset  (\R^{2n}, \omega_{st}), $$
 can be symplectically
embedded in $(M,\omega)$. 

In this article, we analyze the Gromov width of an important class of symplectic manifolds formed by the orbits of the coadjoint action of compact Lie groups.
Let $K$ be a compact Lie group, and  let $\mathfrak k^*$ be the dual of its Lie algebra $\mathfrak k$. 
Each orbit 
$\mathcal{O}\subset \mathfrak k^*$ of the coadjoint action of $K$ on $\mathfrak k^*$
is naturally equipped with the Kostant-Kirillov-Souriau symplectic form, $\omega^{KKS}$, defined by:
$$\omega^{KKS}_{\xi}(X^\#,Y^\#)=\langle \xi, [X,Y]\rangle,\;\;\;\xi \in \mathcal{O} \subset \mathfrak k^*,\;X,Y \in \mathfrak k,$$
where $X^\#,Y^\#$ are the vector fields on $\mathfrak k^*$ induced by $X,Y \in \mathfrak k$ via the coadjoint action of $K$. 
It has been conjectured that the Gromov width of a coadjoint orbit 
$(\mathcal{O}_{\lambda}, \omega^{KKS})$ of $K$,
through a point $\lambda$ in a positive Weyl chamber, is given by the formula \eqref{gw formula}.
The conjecture had been proved for many, but not all cases (see Section \ref{methods}). Moreover, it was unsatisfactory that the proofs for the lower bounds were different for each group. As the conjectured Gromov width can be expressed by one formula for all compact connected Lie groups, \eqref{gw formula}, one would like to have a uniform proof which works
for all these Lie groups. 
\par
The current article provides such a proof for all coadjoint orbits $\mathcal{O}_{\lambda}$ with $\lambda$ lying on some rational line, for any compact connected simple Lie group. 
We fix an ad-invariant inner product $\langle, \rangle$ on $\mathfrak k$ and use it to identify $\mathfrak k^*$ with $\mathfrak k$.
Recall that a coroot $\alpha^{\vee} \in \mathfrak k$ of a root $\alpha$ is defined to be $ \frac{2\alpha}{\langle \alpha, \alpha \rangle}$.
Our main theorem is the following:
\begin{theorem}\label{theorem lb gw}
Let $K$ be a compact connected simple Lie group.
The Gromov width of a coadjoint orbit $\mathcal{O}_\lambda$ through a point $\lambda$ lying on some rational line in $\mathfrak k^*$, equipped with the Kostant-Kirillov-Souriau symplectic form, is at least
\begin{equation}\label{gw formula}
\min\{\, \left|\left\langle \lambda,\alpha^{\vee} \right\rangle \right|;\  \alpha^{\vee} \textrm{ a  coroot and }\left\langle \lambda,\alpha^{\vee} \right\rangle \neq0\}.
 \end{equation}
\end{theorem}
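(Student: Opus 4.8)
The plan is to realise $\mathcal{O}_\lambda$ as a smooth projective variety carrying an integral K\"ahler form, to exhibit a large standard simplex inside a carefully chosen Newton--Okounkov body of that variety, and to turn that simplex into a symplectically embedded ball by means of a toric degeneration.

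First I would reduce to the case in which $\lambda$ is a dominant integral weight. Both the Gromov width and the quantity in \eqref{gw formula} are positively homogeneous of degree one in $\lambda$: for $c>0$ the identification $\mathcal{O}_\lambda\cong K/K_\lambda\cong\mathcal{O}_{c\lambda}$ carries $\omega^{KKS}$ on $\mathcal{O}_{c\lambda}$ to $c\,\omega^{KKS}$ on $\mathcal{O}_\lambda$, rescaling a symplectic form by $c$ rescales the Gromov width by $c$, and \eqref{gw formula} scales the same way. Since $\lambda$ lies on a rational line inside the closed positive Weyl chamber, $\lambda=c\mu$ for some dominant integral weight $\mu$ and some $c>0$, so it suffices to treat $\mu$. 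From now on $\lambda$ is a dominant integral weight. Set $G=K_{\mathbb C}$, let $G/P_\lambda$ be the generalised flag variety with $\mathcal{O}_\lambda\cong G/P_\lambda$, and let $L_\lambda$ be the ample line bundle on $G/P_\lambda$ with $\dim H^0(G/P_\lambda,L_\lambda^{\otimes k})=\dim V_{k\lambda}$, where $V_\mu$ is the irreducible $G$-module of highest weight $\mu$; classically $(\mathcal{O}_\lambda,\omega^{KKS})$ is then symplectomorphic to $G/P_\lambda$ with a K\"ahler form in the class $c_1(L_\lambda)$. Write $n:=\dim_{\mathbb C}G/P_\lambda$, which equals the number of positive roots $\alpha$ with $\langle\lambda,\alpha^\vee\rangle\ne0$.

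The heart of the proof is to pick a valuation $\nu$ on the function field $\mathbb{C}(G/P_\lambda)$, arising from a complete flag of subvarieties adapted to a well-chosen reduced word for the longest element $w_0$ of the Weyl group, so that the Newton--Okounkov body $\Delta=\Delta(G/P_\lambda,L_\lambda,\nu)\subset\R^n$ is the associated string polytope, and then to exhibit inside $\Delta$ the image, under an integral affine map of determinant $\pm1$, of the standard simplex
\[
\Delta_a=\Big\{x\in\R^n:\ x_j\ge 0,\ \textstyle\sum_{j=1}^{n}x_j\le a\Big\},\qquad a:=\min\{\langle\lambda,\alpha_i^\vee\rangle:\ \alpha_i\text{ simple},\ \langle\lambda,\alpha_i^\vee\rangle\ne 0\}.
\]
Concretely, one works at the vertex of $\Delta$ attached to the extremal weight vector and chooses $n$ suitable integral directions in which $\Delta$ extends; the directions indexed by the ``relevant'' simple roots $\alpha_i$ (those with $\langle\lambda,\alpha_i^\vee\rangle\ne0$) can each be followed for a distance $\langle\lambda,\alpha_i^\vee\rangle$, and the remaining ones then cost nothing beyond the constraint $\sum_j x_j\le a$ --- which is precisely why the size $a$ is forced. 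In type $A$ this is transparent from the Gelfand--Tsetlin description (already for $\mathrm{SL}_3$); the difficulty, and the reason a uniform argument is needed, is to carry this out in every Dynkin type.

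Finally, a string polytope is the moment polytope of a toric degeneration of $(G/P_\lambda,L_\lambda)$, and for such degenerations the work of Harada--Kaveh on toric degenerations and integrable systems, together with the standard ball embeddings into toric charts, gives the implication: if the Newton--Okounkov body contains an integral affine-unimodular copy of $\Delta_a$, then the Gromov width of the underlying variety is at least $a$. Applying this to the simplex produced above yields that the Gromov width of $(\mathcal{O}_\lambda,\omega^{KKS})$ is at least $a$. It remains to observe that every positive coroot is a nonnegative integer combination of simple coroots, so $\langle\lambda,\alpha^\vee\rangle\ne0$ forces $\langle\lambda,\alpha^\vee\rangle\ge\langle\lambda,\alpha_i^\vee\rangle$ for some simple $\alpha_i$ with $\langle\lambda,\alpha_i^\vee\rangle\ne0$; hence $a$ equals the minimum in \eqref{gw formula}, and the theorem follows. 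The main obstacle is the combinatorial step described above --- producing, uniformly in the Dynkin type, a unimodular simplex of the correct size $a$ inside the Newton--Okounkov body --- while the remaining steps are either formal or quotations of established results.
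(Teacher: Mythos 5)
Your overall strategy coincides with the paper's: reduce by homogeneity to a dominant integral weight, identify $(\mathcal{O}_\lambda,\omega^{KKS})$ with $(G/P,\omega_\lambda)$, find a unimodular simplex of size \eqref{gw formula} inside a Newton--Okounkov body, and invoke the toric-degeneration results of Harada--Kaveh/Kaveh to convert the simplex into ball embeddings. The scaling reduction is correct, and so is your observation that the minimum in \eqref{gw formula} is attained at a simple coroot (positive coroots are nonnegative integer combinations of simple coroots). However, there is a genuine gap: the central step --- producing, uniformly in Dynkin type and for possibly non-regular $\lambda$, a $GL(N,\Z)$-copy of $\mathfrak S^N(a)$ inside the chosen Newton--Okounkov body --- is only described heuristically (``work at the vertex attached to the extremal weight vector, follow the directions indexed by the relevant simple roots, the remaining directions cost nothing''), and you yourself flag it as the main obstacle. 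This is not a minor verification: the simplex must have all $N=|\Phi^+_P|$ edge directions of length $a$, not just those indexed by the (few) simple roots in $\supp(\lambda)$, and whether the polytope actually extends that far in every coordinate direction depends delicately on the chosen enumeration of $\Phi^+_P$; for a badly chosen reduced word or a degenerate $\lambda$ the claimed ``corner'' need not exist. Indeed, the paper's own alternative constructions based on reduced decompositions (convex orderings and cominuscule telescopes) only cover regular $\lambda$, and one of them excludes types $\tt G_2$, $\tt F_4$, $\tt E_8$, which shows that the string-polytope-at-a-vertex picture is not automatic.

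What the paper does to close exactly this gap is representation-theoretic rather than polytopal: it uses the valuation attached to a \emph{good ordering} $\underline{\beta}$ of $\Phi^+_P$ (compatible with the partial order on positive roots) and the identification $\Gamma_\lambda=\Es_P(\lambda)$ of the value semigroup with essential monomials. For a good ordering, each single root vector $F_{\beta_i}$ is essential for some fundamental representation $V(\varpi)$ with $\varpi\in\supp(\lambda)$ (Lemma \ref{hilfslemma1}); the semigroup property of the global essential monoid $\Es_P$ (Theorem \ref{global}) then shows that $k\mathbf{e}_1,\ldots,k\mathbf{e}_N$ are essential for $V(k\rho_P)$ and hence for $V(\lambda)$, since $\lambda=k\rho_P+\nu$ with $\nu$ dominant, where $k$ is precisely \eqref{gw formula}; convexity gives $\mathfrak S^N(k)\subset\Delta_\lambda(\underline{\beta})$ (Theorem \ref{Prop:goodordering}). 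Note also that the paper deliberately uses Kaveh's Theorem \ref{summary3}, which does not require finite generation of $\Gamma_{\lambda,\underline{\beta}}$, so it need not restrict to enumerations induced by reduced decompositions as your string-polytope route would. To complete your proposal you would have to either reproduce an argument of this essential-monomial type or prove, for every Dynkin type and every dominant (not necessarily regular) integral $\lambda$, the existence of a reduced word whose string polytope visibly contains the required corner simplex --- neither of which is supplied.
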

It has already been proved in \cite{CC} by Caviedes Castro that the Gromov width of $\mathcal{O}_\lambda$ cannot be greater than \eqref{gw formula}. 
Combining these results we immediately obtain:
\begin{corollary}\label{cor gw}
Let $K$ be a compact connected simple Lie group.
The Gromov width of a coadjoint orbit $\mathcal{O}_\lambda$ through a point $\lambda$ lying on some rational line, equipped with the Kostant-Kirillov-Souriau symplectic form, is given by \eqref{gw formula}.
\end{corollary}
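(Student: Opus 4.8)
The Corollary is immediate from Theorem~\ref{theorem lb gw} and the upper bound of \cite{CC}, so the plan is really to prove Theorem~\ref{theorem lb gw}, and I sketch that. First I would reduce to an integral weight: writing $\lambda=c\mu$ with $c\in\R_{>0}$ and $\mu$ a dominant integral weight (possible precisely because $\lambda$ lies on a rational line), one has $(\mathcal O_{c\mu},\omega^{KKS})\cong(\mathcal O_\mu,\,c\,\omega^{KKS})$ via $\xi\mapsto c\xi$; since both sides of \eqref{gw formula} scale linearly in $\lambda$ and $\mathrm{GW}(M,c\omega)=c\,\mathrm{GW}(M,\omega)$, it suffices to treat $\lambda$ dominant and integral. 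Then $\mathcal O_\lambda\cong G/P_\lambda$ is a complex (partial) flag variety with $G=K_{\C}$, the form $\omega^{KKS}$ is an integral Kähler form representing $c_1(\mathcal L_\lambda)$ for the Borel--Weil line bundle $\mathcal L_\lambda$, and $n:=\dim_{\C}\mathcal O_\lambda$ equals the number of positive roots $\alpha$ with $\langle\lambda,\alpha^\vee\rangle\neq 0$. Put $m:=\min\{\langle\lambda,\alpha^\vee\rangle:\alpha^\vee\text{ a coroot},\ \langle\lambda,\alpha^\vee\rangle>0\}$; the goal is $\mathrm{GW}(G/P_\lambda,\omega^{KKS})\geq m$.

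The strategy follows the two words in the title. \emph{Symplectic step.} I would isolate (in an earlier section) the mechanism: if a smooth projective variety $X$ with integral Kähler form $\omega\in c_1(L)$ admits a flat toric degeneration whose Newton--Okounkov body $\Delta\subset\R^n$ is a rational polytope and which is regular enough that the gradient-Hamiltonian / Harada--Kaveh construction yields a surjective continuous $\mu\colon X\to\Delta$ restricting to a completely integrable system on a dense open set with image $\supseteq\mathrm{int}\,\Delta$, then for every open simplex $\Sigma\subset\mathrm{int}\,\Delta$ carried to the standard simplex $\Sigma_a=\{x\in\R^n_{>0}:\sum_i x_i<a\}$ by an element of $\mathrm{AGL}_n(\Z)=\Z^n\rtimes\mathrm{GL}_n(\Z)$, the ball $B^{2n}_a$ embeds symplectically into $(X,\omega)$ (pull the standard toric ball-chart through action--angle coordinates). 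Hence $\mathrm{GW}(X,\omega)$ is at least the largest capacity of such an admissible simplex in $\Delta$. \emph{Combinatorial step.} Choose a valuation on $\bigoplus_{k\geq 0}H^0(G/P_\lambda,\mathcal L_\lambda^{\otimes k})$ adapted to $P_\lambda$ --- a string parametrization for a well-chosen reduced word, or an FFLV-type / birational-sequence valuation --- so that the Newton--Okounkov body $\Delta_\lambda$ is a rational polytope having a vertex $v$ at which it is unimodular, and show that the $n$ edges of $\Delta_\lambda$ through $v$ have primitive lattice directions $u_1,\dots,u_n$ forming a $\Z$-basis and lattice lengths $\ell_1,\dots,\ell_n$ all $\geq m$. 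The natural candidate for $v$ is the highest-weight vertex, where the opposite big cell is an affine space and the inequalities active at $v$ are coordinate inequalities $x_k\geq 0$ together with ``top'' inequalities of the shape $(\text{linear})\leq\langle\lambda,\alpha^\vee\rangle$; reading off the edge through $v$ cut out by all but one of these, its lattice length should come out to $\langle\lambda,\alpha^\vee\rangle$ for an appropriate positive root $\alpha$, hence $\geq m$. Then $\mathrm{conv}\{v,\,v+\ell_1u_1,\dots,v+\ell_nu_n\}^\circ$ is $\mathrm{AGL}_n(\Z)$-equivalent to $\mathrm{conv}\{0,\ell_1e_1,\dots,\ell_ne_n\}^\circ\supseteq\Sigma_{\min_i\ell_i}\supseteq\Sigma_m$, and the symplectic step gives $\mathrm{GW}\geq m$.

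The main obstacle is the combinatorial step, for two reasons. First, toric degenerations of flag varieties are singular in general, so the symplectic step cannot be applied at an arbitrary vertex: one must either exhibit a genuinely unimodular vertex for a Newton--Okounkov body that is available \emph{uniformly} in all types, or relax the symplectic lemma to tolerate the mild singularities occurring along the relevant faces. Second, the bound $\ell_i\geq m$ on \emph{all} $n$ edge-lengths at that vertex, together with the identification $\ell_i=\langle\lambda,\alpha_i^\vee\rangle$, must be established in a case-free way across the simply laced and non--simply laced types and for every dominant $\lambda$; this is where the structure theory of string / FFLV / birational-sequence polytopes has to do the work, and adapting the reduced word (or birational sequence) to the parabolic $P_\lambda$ is what one would use to make a single choice of vertex succeed simultaneously in all types.

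When $\lambda$ is non-regular (so $P_\lambda\supsetneq B$ and some $\langle\lambda,\alpha^\vee\rangle=0$) one works in the quotient lattice corresponding to $P_\lambda$, of dimension $n$ as above, and the minimum in \eqref{gw formula} is then automatically over the coroots not annihilated by $\lambda$; the two steps go through unchanged. Granting Theorem~\ref{theorem lb gw}, Corollary~\ref{cor gw} is then just the combination of its lower bound with the upper bound of \cite{CC}.
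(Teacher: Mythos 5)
For the Corollary itself your argument coincides with the paper's: Theorem~\ref{theorem lb gw} gives the lower bound, \cite{CC} gives the upper bound, and the two together give equality. Your reduction from rational to integral $\lambda$ by scaling is also exactly the paper's first step in the proof of Theorem~\ref{theorem lb gw}, and the overall architecture (Newton--Okounkov body of $\mathcal L_\lambda$ for a valuation adapted to $P_\lambda$, plus the Harada--Kaveh/Kaveh principle that an open simplex of size $a$ inside $\Delta_\lambda$ yields balls of capacity arbitrarily close to $a$) is the same as in Sections~\ref{section NObodies}--\ref{proof}.

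However, the part of your sketch that carries the actual content of the theorem --- producing, uniformly in all types, a simplex of size $m=\min\{\langle\lambda,\alpha^\vee\rangle\neq 0\}$ inside $\Delta_\lambda$ --- is left open: you yourself flag it as ``the main obstacle'' and the decisive identification of edge lengths with $\langle\lambda,\alpha^\vee\rangle$ at a unimodular vertex is only asserted (``should come out to''), not proved. The paper closes exactly this gap by a different, inequality-free mechanism: it fixes a \emph{good ordering} of $\Phi^+_P$ and works with essential monomials. Lemma~\ref{hilfslemma1} shows each $F_\beta$, $\beta\in\Phi^+_P$, is essential for some fundamental representation in $\supp(\lambda)$; the global monoid property of $\Es_P$ (Theorem~\ref{global}) together with $\lambda=k\rho_P+\nu$ then forces $k\mathbf e_1,\dots,k\mathbf e_N$ to lie in $\Gamma_{\lambda}(1)$, so convexity gives $\mathfrak S^N(k)\subset\Delta_\lambda$ with the simplex sitting at the origin (the highest-weight corner), where it is automatically standard --- no unimodularity of a vertex, no facet description, and no case analysis by type is needed. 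Likewise your concerns about singular toric degenerations and ``regular enough'' Harada--Kaveh data are unnecessary for the lower bound: the paper invokes Kaveh's Theorem~\ref{summary3}, which requires neither finite generation of $\Gamma_{\lambda,\underline\beta}$ nor any smoothness (smooth points of the polytope only enter for the side result, Proposition~\ref{prop gw attained}, via Corollary~\ref{cor action on smooth pts} and the cominuscule-telescope enumeration). So, as a blind plan, your proposal would still need the representation-theoretic input of Sections~\ref{essmon}--\ref{good orderings} (or a genuinely uniform polytope computation replacing it) before it yields Theorem~\ref{theorem lb gw}, and hence the Corollary.
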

\begin{exam}
Let us reformulate the result for $G=SU(n, \C)$. 
As usual, we identify $\mathfrak{su}(n)$
 with the set of $n \times n$ traceless Hermitian matrices, and denote by 
 $\epsilon_i$ the restriction to $\mathfrak{su}(n)$ of a linear map sending a matrix $A$ to $Trace\ (\overline{E}^T_{ii}A)=
 Trace\ (E_{ii}A)$, where $E_{ii}$ is the
 $n \times n$ matrix with $(i,i)$ entry equal to $1$ and all other entries equal to $0$.
Then $\{\epsilon_i-\epsilon_{j};\, i\neq j\}$ form a root system for the  
special unitary group $SU(n, \C)$. 
The above Corollary says that the Gromov width of a coadjoint orbit $\mathcal{O}_\lambda$ of $SU(n, \C)$, 
passing through a point 
$\lambda=\sum_{j=1}^n \lambda_j \epsilon_j \in \mathfrak{su}(n)^*$, $\lambda_1\geq \ldots \geq \lambda_n$, 
 lying on some rational line, is equal to 
$$\min \{|\lambda_i-\lambda_j|;\ i,j \in \{1,\ldots, n\},\ \lambda_i\neq \lambda_j\}.$$

To be more concrete, let us consider $K=SU(3)$, $\lambda= 2\epsilon_1 + \epsilon_2$ and the coadjoint orbit $\mathcal{O}_\lambda$ which, due to integrality of $\lambda$, admits an algebraic interpretation as a flag variety.
Then Theorem~\ref{theorem lb gw} predicts that the Gromov width is $1$. 
In Section \ref{section NObodies}
we use the theory of Newton-Okounkov bodies to get bounds for the Gromov width: the size of a simplex which can be embedded into the Newton Okounkov body associated to $\mathcal{O}_\lambda$ is a lower bound for the Gromov width for $\mathcal{O}_\lambda$ (Theorem~\ref{summary3}).
In this case the Newton-Okounkov body is the convex polytope in $\mathbb R^3$
defined by the equations (see \cite{FFL}):
$$
\{(x_1,x_2,x_3)\in \mathbb R^3; x_1,x_2,x_3\ge 0, x_1\le 1, x_2\le 1, x_1+x_2+x_3\le 2\}.
$$
The standard simplex of size $1$
$$
\Delta:=Conv\{(0,0,0),(1,0,0),(0,1,0),(0,0,1)\}.
$$
embeds into this Newton-Okounkov body.
Corollary~\ref{cor gw} implies 
that the Gromov width is equal to $1$. 
\end{exam} 
As a side result, we also observe that in certain cases the supremum in the definition of the Gromov width is attained.
\begin{proposition}\label{prop gw attained}
Let $K$ be a compact connected simple Lie group, not of type $\tt G_2, \tt F_4$ or $\tt E_8$,
and let $(\mathcal{O}_\lambda, \omega_\lambda^{KKS})$ be its generic coadjoint orbit, through a point $\lambda$ lying on some rational line in $\mathfrak k^*$.
Then there is a symplectic embedding of a ball of capacity  \eqref{gw formula} into $(\mathcal{O}_\lambda, \omega_\lambda^{KKS})$.
\end{proposition}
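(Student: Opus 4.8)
The plan is to show that when $\lambda$ is generic (regular), one can replace the \emph{open}-ball embedding coming from a half-open simplex of size \eqref{gw formula} inside the Newton--Okounkov body by an embedding of the \emph{closed} ball of the same capacity, exploiting extra room in the polytope. Recall from Section~\ref{section NObodies} that the lower bound in Theorem~\ref{theorem lb gw} is obtained by finding a simplex of size $a = $ \eqref{gw formula} inside a Newton--Okounkov body $\Delta(\mathcal O_\lambda)$ for a well-chosen valuation, and then invoking Theorem~\ref{summary3} to convert an embedded simplex of size $a$ into a symplectically embedded open ball of capacity $a$. The point is that for generic $\lambda$ the relevant Newton--Okounkov body is full-dimensional with nonempty interior around the chosen vertex, so the constructed simplex can be taken to lie in the \emph{interior} of $\Delta(\mathcal O_\lambda)$ except for the single vertex it shares with a facet; one then has slack to enlarge slightly and absorb the boundary into the interior, which is exactly the geometric input needed to upgrade an open-ball embedding to a closed-ball embedding.

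First I would recall the explicit construction of the embedded simplex used to prove Theorem~\ref{theorem lb gw}: it is a translate of $a\cdot\Delta_{\mathrm{std}}$, where $\Delta_{\mathrm{std}}=\mathrm{Conv}\{0,e_1,\dots,e_N\}$, placed with one vertex at a vertex $v$ of $\Delta(\mathcal O_\lambda)$ and edges along the coordinate directions. Second, I would verify that for a \emph{regular} (generic) $\lambda$ the polytope $\Delta(\mathcal O_\lambda)$ has full dimension $N=\dim_{\mathbb C}\mathcal O_\lambda/1$ and that the cone of $\Delta(\mathcal O_\lambda)$ at $v$ is simplicial and \emph{strictly} larger than the coordinate cone generated by $e_1,\dots,e_N$ scaled by $a$ --- equivalently, that the facets through $v$ meet the coordinate rays only at distance $>a$, or can be chosen so after a suitable change of valuation or unimodular coordinate change. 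This is where the hypothesis excluding $\tt G_2, \tt F_4, \tt E_8$ enters: in those three types every coroot is long/short in a way that forces the minimum in \eqref{gw formula} to be achieved ``tightly'' against a facet, leaving no slack, whereas in the remaining types the presence of two coroot lengths (or, in the simply-laced non-$\tt E_8$ cases, the structure of the weight lattice) guarantees the needed room. Third, having produced a \emph{closed} simplex $a\cdot\Delta_{\mathrm{std}}+v$ lying inside $\Delta(\mathcal O_\lambda)$ with only the vertex $v$ on $\partial\Delta(\mathcal O_\lambda)$, I would revisit the proof of Theorem~\ref{summary3} and observe that its ball-embedding construction (via toric degeneration and the standard ball--into--simplex embedding) actually produces an embedding of the \emph{closed} ball of capacity $a$ whenever the simplex of size $a$ is contained in the closure and its relative interior avoids the ``bad'' locus where the toric chart degenerates; genericity of $\lambda$ ensures the orbit is the regular one where this locus is a smooth divisor crossed only at the single vertex, which contributes measure zero and can be handled by the usual removable-singularity/extension argument for symplectic embeddings.

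The main obstacle I anticipate is the third step: controlling the \emph{closed} ball rather than the open one requires care at the single boundary vertex $v$, because the symplectic embedding of Theorem~\ref{summary3} is built from the moment-map/toric picture and a priori only the \emph{open} ball sits inside the preimage of the open simplex. I would handle this either (i) by a direct limiting argument --- embed closed balls of capacity $a-\varepsilon$ for all $\varepsilon>0$ and show the embeddings converge (in an appropriate sense, e.g. uniformly on compacta with derivative bounds) to an embedding of the closed ball of capacity $a$, using that the target near $v$ is an honest smooth symplectic manifold with no obstruction --- or (ii) by choosing the simplex strictly in the interior \emph{including} the vertex, which is possible precisely in the non-excluded types by the slack established in step two, so that the open-ball construction of Theorem~\ref{summary3} already yields a ball whose closure still lands inside $\mathcal O_\lambda$. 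Approach (ii) is cleaner and is what I would write up: the content of the Proposition then reduces to the combinatorial claim that for $K$ not of type $\tt G_2,\tt F_4,\tt E_8$ and $\lambda$ regular, the size-\eqref{gw formula} standard simplex embeds into $\Delta(\mathcal O_\lambda)$ with a neighborhood of its closure still inside, which I would check case by case using the explicit Newton--Okounkov bodies (string polytopes / FFLV polytopes) already invoked in Section~\ref{section NObodies}.
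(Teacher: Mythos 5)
Your central geometric claim is impossible, so approach (ii) cannot be repaired. You propose to place the size-\eqref{gw formula} simplex inside the Newton--Okounkov body so that (a neighborhood of) its \emph{closure} lies in the interior. But the closed simplex is compact, so if it sat in the interior of $\Delta_\lambda$ a whole neighborhood of it would too, and a slight dilation about its barycenter would produce an open simplex of size $a+\delta$ inside $\Delta_\lambda$; Theorem~\ref{summary3} would then give Gromov width at least $a+\delta$, contradicting the upper bound of Caviedes Castro \cite{CC}, which says the width equals $a=$ \eqref{gw formula} exactly. So there is never any ``slack'': a maximal-size simplex must touch the boundary, and the difficulty of attaining the supremum cannot be pushed into the interior. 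Your fallback (i) -- letting embeddings of capacity $a-\varepsilon$ converge -- is exactly the delicate point: there is no general compactness for sequences of symplectic ball embeddings and you supply no uniform bounds, so this is an unproved assertion. Your explanation of the excluded types is also off the mark: $\tt E_8$ is simply laced while $\tt B_n$, $\tt C_n$ have two root lengths and are covered, so ``tightness forced by coroot lengths'' cannot be the reason $\tt G_2$, $\tt F_4$, $\tt E_8$ are left out.

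The mechanism the paper uses is different and does not go through Theorem~\ref{summary3} at all. The exclusion of $\tt G_2$, $\tt F_4$, $\tt E_8$ is representation-theoretic: these are precisely the types for which one cannot order the simple roots so that each fundamental weight $\varpi_j$ is cominuscule for the Levi subalgebra $\mathfrak l_j$. For all other types the cominuscule telescope enumeration of Section~\ref{cominuscule telescopes} exists; it is induced by a reduced decomposition, so $\Gamma_\lambda$ is finitely generated and the Harada--Kaveh integrable system of Theorem~\ref{summary2} is available, and Theorem~\ref{Thm:cominuscule} gives $\mathfrak S^N(k)\subset\Delta_\lambda$ with $k$ equal to \eqref{gw formula}. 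Since the Newton--Okounkov body lies in the positive octant, this half-open simplex is exactly $\Delta_\lambda\cap H$ for an open half-space $H$, i.e.\ a genuine corner of the body; and because every point of a simplex is a smooth point of the polytope, Corollary~\ref{cor action on smooth pts} (the torus action extends over the preimage of the smooth points) yields an embedding of a ball of capacity exactly $k$. Theorem~\ref{summary3}, the only tool you invoke, applies without finite generation but only sees open simplices in the interior and hence only capacities $k-\varepsilon$; to fix your write-up you should replace your steps two and three by the existence of the cominuscule ordering, the corner identity $\mathfrak S^N(k)=\Delta_\lambda\cap H$, and Corollary~\ref{cor action on smooth pts}.
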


The paper is organized as follows:  After some comments (Section~\ref{methods})
about the history of the subject and the development of the mathematical tools related to the problem,
we recall (Section~\ref{section NObodies}) in more detail how Newton-Okounkov bodies can be used to
analyze the Gromov width. In Section~\ref{proof}, we give a proof of Theorem~\ref{theorem lb gw} up to a construction
of certain simplices embedded in Newton-Okounkov bodies for coadjoint orbits $\mathcal{O}_\lambda$
associated to integral weights. This is done in Section~\ref{good orderings}, after recalling Lie-theoretic
constructions of Newton-Okounkov bodies (Section~\ref{essmon}).
In Section~\ref{convex ordering} and ~\ref{cominuscule telescopes}  
we provide two alternative constructions of simplices embedded in Newton-Okounkov bodies. 
\vskip 3pt
\noindent
{\bf Acknowledgments.} The work of Xin Fang was partially supported by the Alexander von Humboldt Foundation.
The authors are grateful to the anonymous referees for their comments which improved the quality of this work.

\section{A bit of history and methods for finding lower bounds of the Gromov width}\label{methods}

Recall that every coadjoint orbit intersects a chosen positive Weyl chamber in a single point, providing a bijection between the coadjoint orbits and points in a positive Weyl chamber. Orbits intersecting the interior of a positive Weyl chamber are called {\it generic orbits}. They are of maximal dimension among coadjoint orbits of $K$, and are diffeomorphic to the quotient $K/S$, where $S$ is a maximal torus of $K$. 
Orbits intersecting a positive Weyl chamber at its boundary are called {\it degenerate orbits}.
For example, when $K=U(n,\C)$ is the unitary group, a coadjoint orbit can be identified with the set of Hermitian matrices with a fixed set of eigenvalues. The orbit is generic if all eigenvalues are different, and in this case it is diffeomorphic to the manifold of complete flags in $\C^n$.

Many cases of the conjecture about the Gromov width of coadjoint orbits had already been proved:
\begin{itemize}
\item
Karshon and Tolman in \cite{KT}, and independently Lu in \cite{LuGW}, proved the conjecture for complex Grassmannians (which are degenerate coadjoint orbits of $U(n,\C)$);
\item Zoghi in \cite{Z} proved it for generic indecomposable\footnote{A coadjoint orbit through a point $\lambda$ in the interior of a chosen positive Weyl chamber is called  indecomposable in \cite{Z} if there exists a simple positive root $\alpha$ such that for any positive root $\alpha'$ there exists a positive integer $k$ such that $\langle \lambda, \alpha' \rangle=k \langle \lambda, \alpha \rangle$. }  orbits of $U(n,\C)$;
\item Moreover, he showed that the formula \eqref{gw formula} gives an upper bound for the Gromov width of generic, indecomposable orbits of any compact connected Lie group;
\item In \cite{CC}, Caviedes Castro extended the above result about the upper bound by removing the generic and indecomposable assumptions. This concludes the proof of the upper bound part of the conjecture;
\item The third author showed in \cite{P} that the formula \eqref{gw formula} gives a lower bound of the Gromov width of (not necessarily generic) coadjoint orbits of
 $U(n,\C)$, $SO(2n,\C)$ and $SO(2n+1,\C)$. (The result about $SO(2n+1,\C)$ works only for orbits satisfying one mild technical condition: the point $\lambda$ of intersection of the orbit and a chosen positive Weyl chamber should not belong to a certain subset of one wall of the chamber; see \cite{P} for more details. In particular, all generic orbits satisfy this condition);
 \item Halacheva and the third author in \cite{HP} proved that the lower bound is given by  \eqref{gw formula} for generic orbits of the symplectic group $\text{Sp}(n)=U(n,\mathbb{H})$;
 \item Lane in \cite{L} proved that the lower bound is given by  \eqref{gw formula} for generic orbits of the exceptional group $G_2$;
 \item Additionally, some of the coadjoint orbits fall into the category of manifolds analyzed by Loi and Zuddas, and their Gromov widths are found in \cite{LZ}.
\end{itemize}
In particular, the Gromov width of coadjoint orbits of $E_6$, $E_7$, $E_8$, $F_4$ and of some coadjoint orbits of $SO(2n+1)$, $Sp(n)$ and $G_2$ was not known before we proved Theorem \ref{theorem lb gw}.

We briefly explain how one may prove claims like Theorem \ref{theorem lb gw}.
Lower bounds of the Gromov width are found by providing explicit symplectic embeddings of balls.
If the given manifold $M$ is equipped with an effective Hamiltonian action of a compact torus $S$, such 
embeddings can be constructed by ``flowing along" the flow of the vector fields induced by the action 
(\cite[Proposition 2.8]{KT}) and can be read off from the image of the momentum 
map\footnote{
The momentum map for a Hamiltonian $S$ action on $M$ is a map $\Phi \colon M \rightarrow \mathfrak{s}^*$ such that for any $\xi \in \mathfrak{s}$ the differential of $p \mapsto \Phi(p) (\xi)$ is equal to $\omega(\xi,\_ )$. Thus it is unique only up to adding a constant.
}
$\Phi \colon M \rightarrow \mathfrak{s}^*$ associated to this Hamiltonian action. 
The situation is especially nice if the action is {\it toric}, that is, the dimension of the torus 
is equal to the complex dimension of the manifold.
We describe this basic case more carefully. 
Identify $ \mathfrak{s}^*$ with $ \R^{\dim S}$, thinking of  the circle as $S^1 = \R / \Z$, so that
the lattice of $ \mathfrak{s}^*$ is mapped to $\Z^{\dim S}$ in $\R^{\dim S}$.
The momentum map for the standard $S=(S^1)^n$ action on $(\R^{2n}, \omega_{st})$ maps a ball of capacity $a$ into 
an $n$-dimensional simplex of size $a$, closed on $n$ sides: 
\begin{equation}\label{simplex}
\mathfrak S^n(a):=\{(x_1,\ldots,x_n) \in \R^n;\ 0\leq x_j< a,\  \sum_{j=1}^n x_j< a\}.
\end{equation}
Conversely, suppose that for a toric manifold $(M^{2n},\omega)$ a ``corner" of the image $\Phi(M)$ under a momentum map $\Phi \colon M \rightarrow \mathfrak{s}^*\cong \R^{n}$ is a simplex of size $a$, i.e.
there exist
 $\Psi \in GL(n,\Z)$, $ x \in
\R^n$, and an open affine half-space $H$ such that 
$$\Psi (\mathfrak S^n(a))+\,x =  \Phi(M)\cap H.$$
Then a ball of capacity $a$ can be symplectically embedded in $(M^{2n},\omega)$.
The appearance of $\Psi \in GL(n,\Z)$ arises from the non-canonical identification of $\mathfrak{s}^*$ with $\mathbb{R}^n$ (which depends on a chosen splitting of $S$ into a product of circles), whereas $x$ appears because the momentum map is unique only up to adding a constant.
This result was later generalized to open simplices contained somewhere in the momentum map image (\emph{i.e.}, not necessarily being $\Phi(M)\cap H$).
Then one obtains only embeddings of balls of capacities $a - \varepsilon$ for any $\varepsilon>0$, still implying that the Gromov width is at least $a$.
More precisely:
\begin{proposition}\cite[Proposition 1.3]{LuSC}\cite[Proposition 2.5]{P}\label{embedding}
For any connected, proper (not necessarily compact) Hamiltonian $(S^1)^n$-space $M$ of dimension $2n$, with a momentum map $\Phi$, the Gromov width of $M$ is at least
$$\sup \{a>0\,;\, \exists \; \Psi \in GL(n,\Z), x \in
\R^n,\textrm{ such that }
\Psi (\textrm{int }\mathfrak S^n(a))+\,x \subset \Phi(M) \}.
$$
\end{proposition}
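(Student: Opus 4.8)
The plan is to strip the statement down to its geometric core and then invoke a Traynor-type ball construction. First I would dispose of $\Psi$ and $x$, which are artefacts of the non-canonical identification $\mathfrak{s}^*\cong\R^n$ and of the additive ambiguity of $\Phi$, and affect neither the action nor the Gromov width of $M$: precomposing the $(S^1)^n$-action with the automorphism of $(S^1)^n=\R^n/\Z^n$ induced by $(\Psi^{T})^{-1}\in GL(n,\Z)$ replaces $\Phi$ by $\Psi^{-1}\circ\Phi$, and subtracting a constant moves $x$ to $0$. After these reductions the hypothesis reads $\operatorname{int}\mathfrak{S}^n(a)\subset\Phi(M)$, and since the Gromov width is a supremum it suffices to produce, for every $\varepsilon\in(0,a)$, a symplectic embedding $B^{2n}_{a-\varepsilon}\hookrightarrow M$. (If the $(S^1)^n$-action is not effective, $\Phi(M)$ lies in a proper affine subspace, the supremum in the statement ranges over the empty set, and there is nothing to prove; so I assume effectiveness.)

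Next I would put $M$ into action--angle form over the simplex. The set $U:=\operatorname{int}\mathfrak{S}^n(a)$ is open, convex (hence simply connected), and contained in $\Phi(M)$, so it lies in the interior of $\Phi(M)$; since $\Phi$ is proper and $M$ is a smooth $2n$-manifold carrying an effective $(S^1)^n$-action, the action is free over $U$, the fibers over $U$ are connected single Lagrangian $T^n$-orbits (convexity theorem for proper momentum maps), and $\Phi^{-1}(U)$ is connected. The action--angle theorem, global here because $U$ is simply connected, then supplies an $(S^1)^n$-equivariant symplectomorphism
$$\Phi^{-1}(U)\ \xrightarrow{\ \sim\ }\ U\times T^n,\qquad T^n=\R^n/\Z^n,$$
taking $\omega$ to $\sum_{j=1}^n dx_j\wedge d\theta_j$ and $\Phi$ to the projection onto $U$.

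The substance of the proof is then to embed a ball of capacity $a-\varepsilon$ into $U\times T^n$ with this standard form. Fix $\delta>0$ with $n\delta<\varepsilon$; then $\mathfrak{S}^n(a-\varepsilon)+\delta(1,\dots,1)$ is a simplex of size $a-\varepsilon$, closed on $n$ sides, contained in $U$. It is a classical fact, going back to Traynor's work on symplectic packings, that $\mathfrak{S}^n(c)\times T^n$ with the form $\sum_j dx_j\wedge d\theta_j$ contains a symplectically embedded ball of capacity $c$: over $\operatorname{int}\mathfrak{S}^n(c)$ one simply uses the standard ball $B^{2n}_c\cap(\C^\times)^n$ written in action--angle coordinates $(\pi|z_j|^2,\arg z_j)$, and the coordinate strata $\{z_j=0\}$ of the ball --- where $\theta_j$ degenerates --- are ``unwound'' into the torus directions by a shear-and-cutoff in the $\theta$-coordinates, which is possible exactly because $\mathfrak{S}^n(c)$ is open along its slanted face. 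Taking $c=a-\varepsilon$ and translating by $\delta(1,\dots,1)$ yields a symplectic embedding $B^{2n}_{a-\varepsilon}\hookrightarrow\bigl(\mathfrak{S}^n(a-\varepsilon)+\delta(1,\dots,1)\bigr)\times T^n\subset U\times T^n$; composing with the symplectomorphism of the previous paragraph gives $B^{2n}_{a-\varepsilon}\hookrightarrow M$, and letting $\varepsilon\to 0$ completes the argument.

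The main obstacle I anticipate is precisely this last construction: embedding the \emph{entire} ball, not just its open toric part, into $\mathfrak{S}^n(a-\varepsilon)\times T^n$, carrying out the ``unwinding'' of the coordinate strata carefully and checking that the resulting map is a symplectomorphism onto its image. A lesser technical point is justifying the global action--angle normal form in the non-compact (but proper) setting; this follows from the structure theory of proper Hamiltonian torus actions together with the Duistermaat--Heckman theorem, and should be cited precisely rather than reproved.
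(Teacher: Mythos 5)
The paper does not actually prove Proposition \ref{embedding}; it quotes it from Lu and Pabiniak, and your route --- reduce by $GL(n,\Z)$ and translation, trivialize the Lagrangian torus fibration over the open simplex by action--angle coordinates, then invoke the Traynor-type embedding of a ball of capacity $c$ into $\mathfrak S^n(c)\times T^n$ --- is essentially the argument of those references, so in outline you are proving the right statement in the standard way. There is, however, one genuine error: your disposal of the non-effective case. A trivially acting subgroup forces $\Phi(M)$ into a proper affine subspace only when the kernel has positive \emph{dimension}. If the kernel is a nontrivial finite group, the image can still contain large simplices, and then the statement as you phrase it is simply false: take $S^2$ of area $a$ with $S^1=\R/\Z$ acting through the $k$-fold cover ($t$ acts by rotation by angle $2\pi kt$), $k\ge 2$. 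This is a compact connected Hamiltonian $S^1$-space, its momentum image is an interval of length $ka$, so your supremum equals $ka$, while the Gromov width of $S^2$ is $a$. Thus effectiveness --- more precisely, that the torus acting has period lattice $\Z^n$, as is arranged in the Harada--Kaveh integrable systems to which the proposition is applied --- is an implicit hypothesis of the proposition that must be assumed, not argued away; in your own scheme a finite kernel shrinks the action--angle fibers from $\R^n/\Z^n$ to $\R^n/\Lambda$ with $\Lambda\supsetneq\Z^n$, and the Traynor embedding (hence the bound) is lost by the index of the lattice.

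A lesser point: the claim that over $U=\operatorname{int}\mathfrak S^n(a)$ the action is free and every fiber is a single Lagrangian orbit is true but needs more than the word ``convexity''. One needs that a proper momentum map is open onto its image (local--global convexity, Lerman--Meinrenken--Tolman--Woodward; see also Karshon--Lerman), which together with the local normal form rules out positive-dimensional stabilizers over interior points of $\Phi(M)$, and then effectiveness plus the principal-orbit-type (or local model) argument rules out nontrivial finite stabilizers; compactness and connectedness of the fibers then force each fiber to be one free orbit, after which Duistermaat's action--angle theorem over the contractible base $U$ gives your equivariant trivialization. You do flag that these structure results and the Traynor lemma should be cited rather than reproved, so apart from the effectiveness issue above this is bookkeeping; note also that even a version of the Traynor lemma losing an $\varepsilon$ suffices, since the conclusion is stated as a supremum.
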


In the case where the action is not toric, one needs to look for projections of simplices not in the whole moment map image, but in its part called a centered region (\cite[Proposition 2.8]{KT}).
 Coadjoint orbits of $K$ are equipped with Hamiltonian (though usually not toric) actions of the maximal torus of $K$. Applying \cite[Proposition 2.8]{KT} to this action, Zoghi in \cite{Z} constructed symplectic embeddings, proving that the Gromov widths of generic indecomposable coadjoint orbits of $U(n)$ are given by \eqref{gw formula}. For non-simply laced groups, the same trick does not give the expected lower bound, but a weaker one (\cite[Appendix A]{Pso}). To obtain the good lower bounds for the coadjoint orbits of $SO(2n)$ and $SO(2n+1)$, (\emph{i.e.}, equal to \eqref{gw formula}), a different action was used: a Gelfand-Tsetlin action (\cite{P}). 
This action is defined only on an open dense subset of the orbit, but there it is toric and therefore provides embeddings of relatively big balls.
This approach fails for the symplectic group. The corresponding Gelfand-Tsetlin action is not toric in that case, and although one still obtains some symplectic embeddings of balls, these balls are of capacities smaller than the expected Gromov width.
\par
A new upgrade in these tools came with the work of Harada and Kaveh \cite{HK}, where the idea of a toric degeneration was brought from algebraic to symplectic geometry. 
A toric degeneration of a complex algebraic variety $X$ is a flat family over $\C$, with generic fibers $X_z$ isomorphic to $X$, and the special fiber $X_0$ being a toric variety. One constructs such a degeneration from a very ample Hermitian line bundle over $X$ and a valuation on its sections (satisfying certain assumptions).
Harada and Kaveh showed how to create a degeneration of a given symplectic manifold (with some relatively mild assumptions)
to a toric variety, keeping track of the symplectic form, and in such a way that the toric action on that variety can be pulled back to a toric action on an open dense subset of the symplectic manifold (Theorem~\ref{summary2}). A toric action obtained in this way was used by Halacheva and the third author in \cite{HP} to prove that the Gromov width of generic coadjoint orbits of the symplectic group $\text{Sp}(n)=U(n,\mathbb{H})$ has a lower bound as in \eqref{gw formula}. 
As this type of argument could be used for any (compact connected and simple) Lie group, it prompted the idea of having one unified proof for all coadjoint orbits.
In the next section, we explain in more detail how this new tool can be used to analyze the Gromov width.

\section{Newton-Okounkov bodies and root subgroups}\label{section NObodies}
As before, let $K$ be a compact connected simple Lie group. 
Since $K$ and its universal cover $\widetilde{K}$ differ only by a finite group which is central in $\widetilde{K}$, both have identical coadjoint orbits. Thus, without loss of generality,
we can assume that $K$ is simply connected. In this section we use
the strong relationship between the theory of coadjoint orbits for a compact Lie group 
and generalized flag varieties in the algebraic setting for its complexification.
We apply tools from algebraic geometry (theory of Newton-Okounkov bodies) and representation theory to the algebraic
setting, which then, following the ideas of Harada and Kaveh \cite{HK}, will lead us
to the proof of the main theorem.

Denote the complexification of $K$ by $G=K_{\mathbb C}$. Let $\mathfrak g$ be the Lie
algebra of $G$.  We fix a maximal torus $S\subset K$.
Its complexification $T=S_{\mathbb C}$ is then a maximal (algebraic) torus in $G$. Denote by $\mathfrak t$
its Lie algebra. Then $\mathfrak t$ is a Cartan subalgebra of $\mathfrak g$.
We fix a Borel subalgebra $\mathfrak b$ of $\mathfrak{g}$ containing $\mathfrak t$. Let $B\subset G$
be the corresponding Borel subgroup, then $T\subset B$.
Let $U^-$ be the unipotent radical of $B^-$, the Borel subgroup opposite to $B$. 
Let $\lambda$ be a dominant integral weight and denote by $\supp(\lambda)$ the {\it support of $\lambda$},
\emph{i.e.,} the set of fundamental weights occurring with a nonzero coefficient in writing $\lambda$ into a sum of fundamental weights. The set of dominant integral weights is denoted by $\Lambda^+$.
For the irreducible representation $V(\lambda)$ of $G$ of highest weight $\lambda$, let 
$\mathbb Cv_\lambda$ be the highest weight line. Let $P=P_{\lambda}\supseteq B$ be the 
normalizer in $G$ of this line. Recall that $P$ depends only on the support $\supp(\lambda)$
of $\lambda$, not on the weight itself. The associated line bundle $\mathcal L_\lambda$ on $G/P$
is very ample, thus, after fixing a Hermitian structure on $\mathcal L_\lambda$, one can equip $G/P$ with a symplectic structure $\omega_{\lambda}$ induced from the Fubini-Study form on the projective space 
$\mathbb P(\mathrm{H}^0(G/P,\mathcal L_\lambda)^*)=\mathbb P(V(\lambda))$ via the Kodaira embedding.
\par
To compare the algebraic and the compact setting, let $K_{P}=K\cap P$.  Then $G/P=K/K_{P}$ can be identified with the highest weight orbit
$G.[v_\lambda]\subset \mathbb P(V(\lambda))$. 
Recall that a dominant integral weight is an integral point in the chosen positive Weyl chamber.
The coadjoint orbit $\mathcal{O}_{\lambda}$ of $K$ through $\lambda$, is diffeomorphic to $K/K_{P}$, and, when equipped with the Kostant-Kirillov-Souriau symplectic form, it is symplectomorphic to $(G/P, \omega_{\lambda})$ (see for example \cite[Remark 5.5]{CC}). 
\par
To construct toric degenerations of $G/P$, we use the theory of Newton-Okounkov bodies \cite{KK,LM}
and the method of birational sequences \cite{FFL}.
Let $U^-_{P}$ be the unipotent radical of the parabolic
subgroup $P^-$, opposite to $P$. The birational map $U^-_{P}\rightarrow G/P$, $u\mapsto u.[\text{id}]$, induces an isomorphism of fields $\mathbb C(U^-_P)\simeq \mathbb C(G/P)$.
\par
For a positive root $\beta$, denote by $\mathfrak g_{-\beta}\subset \text{Lie\,} U^-$ the associated root subspace
and let $U_{-\beta}=\exp \mathfrak g_{-\beta}\subset U^-$ be the corresponding root subgroup.
Let 
\begin{equation}\label{def_phi_p}
\Phi^+_{P}=\{\beta\in\Phi^+;\ \mathfrak g_{-\beta}\subset \text{Lie\,} U^-_{P}\}.
\end{equation} 
We fix an enumeration $\underline{\beta}=\{\beta_1,\ldots,\beta_N\}$ of the roots in $\Phi^+_{P}$.
The product map
$$
\pi: U_{-\beta_1}\times \cdots\times U_{-\beta_N}\rightarrow U^-_{P}
$$
is known to be an isomorphism of affine varieties. We write $\mathbb C[x_{\beta_i}]$ for the
coordinate ring of $U_{-\beta_i}$, which is, as affine variety, just an affine line.

Let $>_{\mathrm{r}}$ be the right lexicographic order on $\mathbb N^N$, that is for two tuples $\underline{\mathbf m}=(m_1,m_2,\ldots,m_N)$ and $\underline{\mathbf k}=(k_1,k_2,\ldots,k_N)$, we say that
$\underline{\mathbf m}>_{\mathrm{r}} \underline{\mathbf k}$, if there exists $1\leq s\leq N$ such that $m_N=k_N,\cdots, m_{s+1}=k_{s+1}$, and $m_{s}>k_{s}$.
We get an induced monomial order on the set of monomials in $\mathbb C[x_{\beta_1},\ldots,x_{\beta_N}]$ by defining
$x^{\underline{\mathbf m}}>_{\mathrm{r}} x^{\underline{\mathbf k}}$ if 
${\underline{\mathbf m}}>_{\mathrm{r}} {\underline{\mathbf k}}$, where for $\underline{\mathbf t}=(t_1,\ldots,t_N)$, $x^{\underline{\mathbf t}}:=x_{\beta_1}^{t_1}\cdots x_{\beta_N}^{t_N}$.
We define an induced $\mathbb Z^N$-valued valuation on $\mathbb C(G/P)=\mathbb C(x_{\beta_1},\ldots,x_{\beta_N})$ in the following way: for a nonzero polynomial in $\mathbb C[x_{\beta_1},\ldots,x_{\beta_N}]$,
$$
\nu(\sum a_{\underline{\mathbf m}} x^{\underline{\mathbf m}}):=\min\{\underline{\mathbf m}; a_{\underline{\mathbf m}}\not=0\}
$$
 and $\nu(\frac{f}{g})=\nu(f)-\nu(g)$ for a nonzero
element $f/g\in \mathbb C(G/P)$.

Let $R_\lambda$ be the ring of sections
$$
R_\lambda=\bigoplus_{\ell\geq 0}\, \mathrm{H}^0(G/P,\mathcal L_\lambda^{\otimes \ell}).
$$
We fix a nonzero highest weight section $s_0\in \mathrm{H}^0(G/P,\mathcal L_\lambda)$ and form a graded monoid (\emph{i.e.}, a graded semigroup with identity)
\begin{align*}
\Gamma_{\lambda,\,\underline{\beta}}&=\bigcup_{\ell\in\mathbb N} \Gamma_{\lambda,\,\underline{\beta}}(\ell) \subset \mathbb N\times \mathbb Z^N,\\ \ \text{where}\ \
\Gamma_{\lambda,\,\underline{\beta}}(\ell)&=\{(\ell,\nu(s/s_0^\ell)); s\in \mathrm{H}^0(G/P,\mathcal L_\lambda^{\otimes \ell})\}.
\end{align*}
The Newton-Okounkov body $\Delta_\lambda(\underline{\beta})$ associated to the valuation is the convex body defined as
$$
\Delta_\lambda(\underline{\beta})=\overline{ Conv\ \big\{ \tfrac{1}{\ell} \underline{\mathbf m}; (\ell,\underline{\mathbf m})\in \Gamma_{\lambda,\,\underline{\beta}}\big\}}\subset \mathbb R^N.
$$
Equivalently, $\Delta_\lambda(\underline{\beta})$ is the intersection of the closure of the convex hull of $\Gamma_{\lambda,\,\underline{\beta}} \cup \{(0,0)\} \subset  \mathbb N\times \mathbb Z^N$ with $\{1\} \times \mathbb Z^N$.
The following theorems are results of Anderson \cite{A}, Harada and Kaveh \cite{HK}. 
They hold in a much more general situation, but here we rephrase them according to the special circumstance of this article.
\begin{theorem}[\cite{A}]\label{summary}
If the monoid $\Gamma_{\lambda,\,\underline{\beta}}$ is finitely generated, there exists a flat family $\pi:\frak X\rightarrow \mathbb C$ such that 
for any $z\in\mathbb C\backslash\{0\}$, the fibre $X_z=\pi^{-1}(z)$ is isomorphic to $G/P$, and 
$X_0=\pi^{-1}(0)$ is isomorphic to $\mathrm{Proj\,}\mathbb C[\Gamma_{\lambda,\,\underline{\beta}}]$.  The variety $X_0$ is equipped
with an action of the torus $(\mathbb C^*)^N$. 
The normalization of the variety $X_0$ is the toric variety $X_{\Delta_\lambda(\underline{\beta})}$ associated to the rational polytope $\Delta_\lambda(\underline{\beta})$.
\end{theorem}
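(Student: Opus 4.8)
This is Anderson's theorem \cite{A}; let me indicate the proof I would give, which is essentially a Rees‐algebra degeneration, organized as three steps plus the identification of the normalized special fibre. The first step is to check that the associated graded ring of $R_\lambda$ with respect to $\nu$ is the semigroup algebra: $\operatorname{gr}_\nu R_\lambda\cong\mathbb{C}[\Gamma_{\lambda,\underline{\beta}}]$ as graded algebras. For each $\ell$ the assignment $s\mapsto\nu(s/s_0^{\ell})$ filters $\mathrm{H}^0(G/P,\mathcal{L}_\lambda^{\otimes\ell})$, hence $R_\lambda$, by $\mathbb{Z}^N$, with value set $\{\mathbf{m}:(\ell,\mathbf{m})\in\Gamma_{\lambda,\underline{\beta}}\}$. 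The point to verify is that the leaves of this filtration are one‐dimensional: if $\nu(s/s_0^{\ell})=\nu(s'/s_0^{\ell})$, then the ratio $(s/s_0^{\ell})(s'/s_0^{\ell})^{-1}\in\mathbb{C}(G/P)$ has $\nu$‐value $0$, and comparing the $>_{\mathrm{r}}$‐smallest monomials of numerator and denominator shows it equals a unique nonzero scalar $c$ modulo a function of strictly larger valuation, i.e.\ $\nu(s-cs')>_{\mathrm{r}}\nu(s')$. Hence every graded piece of $\operatorname{gr}_\nu R_\lambda$ has dimension at most one, the product of two leading terms has exponent the sum of the two, and one gets the asserted isomorphism; in particular $\mathbb{C}[\Gamma_{\lambda,\underline{\beta}}]$ is a domain, being a subalgebra of the group algebra of $\mathbb{Z}^{N+1}$.

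The second step is where the finite generation hypothesis is essential, and it is the part I expect to be the main obstacle: I would convert the $\mathbb{Z}^N$‐valued filtration into a genuine $\mathbb{N}$‐filtration with the same associated graded. Fixing finitely many homogeneous generators of $\Gamma_{\lambda,\underline{\beta}}$, one constructs a group homomorphism $e\colon\mathbb{Z}^N\to\mathbb{Z}$ realizing the order $>_{\mathrm{r}}$ on the finitely many exponent vectors occurring among those generators and their pairwise sums. Combining $e$ with a sufficiently positive multiple of the $\ell$‐grading gives a functional nonnegative on $\Gamma_{\lambda,\underline{\beta}}$, hence an exhaustive, grading‐compatible filtration $F_{\le p}R_\lambda$ with $\operatorname{gr}^F R_\lambda\cong\operatorname{gr}_\nu R_\lambda\cong\mathbb{C}[\Gamma_{\lambda,\underline{\beta}}]$, where $R_\lambda$ and $\mathbb{C}[\Gamma_{\lambda,\underline{\beta}}]$ are both finitely generated. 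Such an $e$ need not exist when $\Gamma_{\lambda,\underline{\beta}}$ is not finitely generated, which is exactly why the theorem carries that hypothesis.

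The rest is formal. I would form the Rees algebra $\mathcal{R}:=\bigoplus_{p\ge 0}(F_{\le p}R_\lambda)\,t^{p}\subset R_\lambda[t]$, bigraded by $(\ell,p)$; it is finitely generated over $\mathbb{C}[t]$ and $\mathbb{C}[t]$‐torsion‐free (a subring of the domain $R_\lambda[t]$), hence flat over $\mathbb{C}[t]$, with $\mathcal{R}/t\mathcal{R}\cong\operatorname{gr}^F R_\lambda\cong\mathbb{C}[\Gamma_{\lambda,\underline{\beta}}]$ and $\mathcal{R}/(t-z)\mathcal{R}\cong R_\lambda$ for $z\ne 0$ via $ft^{p}\mapsto z^{p}f$. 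Set $\pi\colon\mathfrak{X}:=\operatorname{Proj}_{\mathbb{C}[t]}\mathcal{R}\to\operatorname{Spec}\mathbb{C}[t]=\mathbb{C}$, the relative $\operatorname{Proj}$ taken for the $\ell$‐grading. Then $\pi$ is flat because $\mathcal{R}$ is, and since relative $\operatorname{Proj}$ commutes with base change on the base, $\pi^{-1}(z)\cong\operatorname{Proj}R_\lambda\cong G/P$ for $z\ne 0$ (as $\mathcal{L}_\lambda$ is very ample) while $\pi^{-1}(0)=X_0:=\operatorname{Proj}\mathbb{C}[\Gamma_{\lambda,\underline{\beta}}]$; the $\mathbb{Z}^N$‐grading on $\mathbb{C}[\Gamma_{\lambda,\underline{\beta}}]$ (coming from the $\mathbb{Z}^N$ factor of $\mathbb{N}\times\mathbb{Z}^N$) induces the $(\mathbb{C}^{*})^{N}$‐action on $X_0$.

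Finally, for the normalization statement: since $\Gamma_{\lambda,\underline{\beta}}$ is a finitely generated cancellative torsion‐free monoid, the integral closure of $\mathbb{C}[\Gamma_{\lambda,\underline{\beta}}]$ in its fraction field is $\mathbb{C}[\widehat{\Gamma}]$, where $\widehat{\Gamma}=\mathbb{Z}\Gamma_{\lambda,\underline{\beta}}\cap\mathbb{Q}_{\ge 0}\Gamma_{\lambda,\underline{\beta}}$ is the saturation (Gordan's lemma); passing to $\operatorname{Proj}$ (after replacing $\Gamma_{\lambda,\underline{\beta}}$ by a Veronese generated in degree one, which does not change $\operatorname{Proj}$) shows the normalization of $X_0$ is $\operatorname{Proj}\mathbb{C}[\widehat{\Gamma}]$. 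The slice $\{\mathbf{m}:(1,\mathbf{m})\in\mathbb{Q}_{\ge 0}\Gamma_{\lambda,\underline{\beta}}\}$ of the rational cone generated by $\Gamma_{\lambda,\underline{\beta}}$ is, by the definition of the Newton–Okounkov body together with finite generation (which makes it bounded), the polytope $\Delta_\lambda(\underline{\beta})$; hence $\operatorname{Proj}\mathbb{C}[\widehat{\Gamma}]$ is the projective toric variety $X_{\Delta_\lambda(\underline{\beta})}$ attached to that polytope, by the standard dictionary between rational polytopes and projective toric varieties. This completes the plan.
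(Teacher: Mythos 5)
The paper does not prove Theorem \ref{summary} at all: it is quoted (specialized to $G/P$) from Anderson \cite{A}, whose argument is precisely the Rees-algebra degeneration you outline --- one-dimensional leaves of the lowest-term valuation giving $\operatorname{gr}_\nu R_\lambda\cong\mathbb C[\Gamma_{\lambda,\underline\beta}]$, a linear form converting the $\mathbb Z^N$-filtration into an $\mathbb N$-filtration (this is where finite generation enters), relative Proj of the Rees algebra over $\mathbb C[t]$ for the flat family with the $(\mathbb C^*)^N$-action on the special fibre, and saturation of the semigroup for the normalization statement. Your sketch is correct in outline and follows essentially the same route as the cited source; the one point to tighten is that the functional $e$ must respect $>_{\mathrm{r}}$ on the finite set of valuation vectors arising from a finite presentation (generators \emph{and} relations) of the algebra, not merely on the generators and their pairwise sums, which is the standard refinement made in \cite{A}.
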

Moreover, the torus action on $X_0$ induces a torus action on a subset of $G/P$.
\begin{theorem}[\cite{HK}]\label{summary2}
Assume that $\Gamma_{\lambda,\,\underline{\beta}}$ is finitely generated.
There exists an integrable system $\mu= (\mathcal F_1,\ldots,\mathcal F_N): G/P \rightarrow\mathbb R^N$ on
$(G/P, \omega_{\lambda})$, and the image of $\mu$ coincides with the Newton-Okounkov body $\Delta_\lambda(\underline{\beta})$.
The integrable system generates a torus action on the inverse
image under $\mu$ of the interior of $\Delta_\lambda(\underline{\beta})$, and there the restriction of $\mu$ is a momentum map.
\end{theorem}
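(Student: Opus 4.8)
The plan is to deduce the statement from the algebraic degeneration of Theorem~\ref{summary} by transporting the symplectic picture from the toric special fibre back to $G/P$ along a \emph{gradient--Hamiltonian flow}, in the spirit of Ruan's construction as used by Harada and Kaveh.

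First I would fix the flat family $\pi\colon \mathfrak X\to\mathbb C$ of Theorem~\ref{summary}, with $X_z\cong G/P$ for $z\neq 0$ and $X_0=\mathrm{Proj}\,\mathbb C[\Gamma_{\lambda,\,\underline\beta}]$ carrying the $(\mathbb C^*)^N$--action. Since $\Gamma_{\lambda,\,\underline\beta}$ is finitely generated, this family can be realized inside $\mathbb P(W)\times\mathbb C$ for a finite dimensional space $W$ (built from enough graded pieces of $R_\lambda$), compatibly with the very ample polarization $\mathcal L_\lambda$ and equivariantly for the degeneration torus. Restricting the Fubini--Study form of $\mathbb P(W)$ then yields a Kähler form $\Omega$ on the smooth locus of $\mathfrak X$ whose restriction to each $X_z$, $z\neq 0$, is $\omega_\lambda$, and whose restriction to the smooth locus of $X_0$ is a toric Kähler form.

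Next I would introduce the gradient--Hamiltonian vector field $V$, defined away from the critical locus of $\pi$ so that $\iota_V\Omega$ is proportional to $-\,d(\mathrm{Im}\,\pi)$; by the Cauchy--Riemann equations $V$ is, up to sign and a positive rescaling, the metric gradient of $\mathrm{Re}\,\pi$, normalized so that $\mathrm{Re}\,\pi$ decreases at unit speed along its flow, which therefore preserves the level sets of $\mathrm{Im}\,\pi$ and carries $X_1$ towards $X_0$. Because $\pi$ is flat, the time-$1$ flow produces a symplectomorphism $\phi\colon X_1\setminus Z_1\to X_0\setminus Z_0$ onto the complement of a proper closed subset, intertwining $\omega_\lambda$ with the toric symplectic form on $X_0$; here $Z_1\subset G/P$ is the locus where the flow fails to exist up to time $1$, which one shows lies over the union of the non-open torus orbits of $X_0$. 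Establishing the completeness estimates for $V$ away from the singular locus of $\mathfrak X$, and proving that $\phi$ extends continuously over all of $X_1=G/P$, is the technical heart of the argument.

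Finally, let $p\colon X_{\Delta_\lambda(\underline\beta)}\to X_0$ be the normalization of Theorem~\ref{summary} and $\Phi_{\mathrm{tor}}\colon X_{\Delta_\lambda(\underline\beta)}\to\Delta_\lambda(\underline\beta)$ the toric momentum map, which is onto the polytope and a genuine momentum map for $(S^1)^N$ over $\mathrm{int}\,\Delta_\lambda(\underline\beta)$, the locus where $p$ restricts to an isomorphism onto the open orbit. I would then define $\mu=(\mathcal F_1,\ldots,\mathcal F_N)$ on $G/P\setminus Z_1$ by $\mu=\Phi_{\mathrm{tor}}\circ p^{-1}\circ\phi$, verify that each $\mathcal F_i$ extends to a smooth function on all of $G/P$ (using the global Fubini--Study data), that $\mu$ is then continuous and proper with image the closed polytope $\Delta_\lambda(\underline\beta)$, that $\{\mathcal F_i,\mathcal F_j\}=0$ (inherited from the toric side), and that over $\mathrm{int}\,\Delta_\lambda(\underline\beta)$ the map $\mu$ is a momentum map for the torus action pulled back through $\phi$. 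This gives the asserted integrable system. As indicated, the main obstacle is controlling the gradient--Hamiltonian flow near the degenerate toric strata together with the accompanying extension and continuity statements; granting these, the conclusion is a transcription of the standard toric momentum--map picture.
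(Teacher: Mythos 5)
The paper does not prove Theorem \ref{summary2} itself: it is imported verbatim from Harada and Kaveh \cite{HK}, and your sketch follows exactly the strategy of that source --- realize the flat family of Theorem \ref{summary} projectively, run Ruan's gradient--Hamiltonian flow to carry $X_1\cong G/P$ onto the special fibre by a map $\phi$ that is a symplectomorphism over the smooth (in particular the open-orbit) locus of $X_0$, and pull back the toric momentum map through the normalization. So your approach coincides with the cited one; the only inaccuracy is the claim that each $\mathcal F_i$ extends \emph{smoothly} to all of $G/P$: Harada and Kaveh obtain only a continuous extension of $\phi$ (hence of $\mu$), smooth on a dense open subset, which is all that Theorem \ref{summary2} requires since the torus action and momentum-map property are asserted only over $\mu^{-1}\bigl(\mathrm{int}\,\Delta_\lambda(\underline{\beta})\bigr)$, and this continuity near the degenerate strata is indeed the technical heart you correctly flag.
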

In fact, Harada and Kaveh proved that the torus action is defined on a set bigger than just the inverse
image under $\mu$ of the interior of $\Delta_\lambda(\underline{\beta})$.
This is not relevant for proving the main result of this paper, Theorem \ref{theorem lb gw}, but will be needed for proving our side result, Proposition \ref{prop gw attained}.
Recall that a polytope $\Delta \in \R^n$ is called {\it smooth} if there are exactly $n$ edges meeting at each vertex of $\Delta$ and their primitive generators form a $\Z$ basis of $\Z^n$ (see for example \cite[Definition 2.4.2]{CLS}). A point $x$ in the interior of a facet $F$ of a polytope $\Delta$ is called smooth, if $F$ itself is a smooth polytope.
 Note that for a simplex all points are smooth.
\begin{corollary}\label{cor action on smooth pts}
Assume that $\Gamma_{\lambda,\,\underline{\beta}}$ is finitely generated.
The integrable system from Theorem \ref{summary2} generates a torus action on the preimage of the smooth points of $\Delta_\lambda(\underline{\beta})$. 
Therefore, if there exist
 $\Psi \in GL(n,\Z)$, $ x \in
\R^n$, and an open affine half-space $H$ such that 
$$\Psi (\mathfrak S^n(a))+\,x =  \Delta_\lambda(\underline{\beta})\cap H,$$
then a ball of capacity $a$ can be symplectically embedded in $(G/P, \omega_{\lambda})$.
\end{corollary}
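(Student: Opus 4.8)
The plan is to establish the statement in two largely independent steps. In \textbf{Step 1} I would strengthen Theorem~\ref{summary2}, proving that the integrable system $\mu$ generates a Hamiltonian torus action --- with $\mu$ itself as momentum map --- on the open set $\mu^{-1}(\Delta^{\mathrm{sm}})$, where we write $\Delta:=\Delta_\lambda(\underline{\beta})$ and $\Delta^{\mathrm{sm}}\subseteq\Delta$ for the set of its smooth points (here $n$ equals $N=\dim_{\C}G/P$). In \textbf{Step 2} I would feed this into the classical ``toric corner'' packing argument, applied to the non-compact Hamiltonian toric manifold $\mu^{-1}(\Delta\cap H)$, to obtain the ball of capacity exactly $a$.

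For Step 1 I would invoke the full strength of the Harada--Kaveh construction \cite{HK} alluded to in the paragraph after Theorem~\ref{summary2}. The flat family $\pi\colon\mathfrak X\to\C$ of Theorem~\ref{summary} carries a gradient--Hamiltonian flow whose time-one map is a surjective continuous map $\varphi\colon G/P\to X_0$ restricting to an $(S^1)^N$-equivariant symplectomorphism over the nonsingular locus of $X_0$; there the compact torus acts in a Hamiltonian fashion with a momentum map $\bar\mu$ of image $\Delta$, and $\mu=\bar\mu\circ\varphi$. Hence it suffices to show that a smooth point $p$ of $\Delta$ lies over the nonsingular locus of $X_0$. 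If $p$ lies in the relative interior of a face $F$ of $\Delta$ --- the case $F=\Delta$, i.e.\ $p\in\mathrm{int}\,\Delta$, being trivial --- then $p$ sits over a single $(\C^*)^N$-orbit $O_p$ of $X_0$, and the hypothesis that $F$ is a smooth polytope is precisely what should force $X_0$ (together with $\overline{O_p}$, which up to normalization is the toric variety of $F$) to be nonsingular along $O_p$; here the normalization $X_\Delta\to X_0$, the orbit--face dictionary for the normal toric variety $X_\Delta$, and finite generation of $\Gamma_{\lambda,\underline{\beta}}$ are the tools. Granting this, $\varphi$ is a symplectomorphism over a neighbourhood of $\varphi(\mu^{-1}(p))$ and the torus acts there; assembling over all $p\in\Delta^{\mathrm{sm}}$ gives the claim.

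For Step 2, let $\Psi\in GL(n,\Z)$, $x\in\R^n$ and an open affine half-space $H$ satisfy $\Psi(\mathfrak S^n(a))+x=\Delta\cap H$. Since $\mathfrak S^n(a)$ is, up to the lattice automorphism $\Psi$ and the translation by $x$, a piece of the standard simplex, the region $\Delta\cap H$ consists of smooth points of $\Delta$ --- near the vertex $x$ it is bounded by $n$ facets meeting unimodularly (as $\Psi\in GL(n,\Z)$), so the faces of $\Delta$ touching $\Delta\cap H$ are smooth polytopes --- and Step 1 applies over $\mu^{-1}(\Delta\cap H)$. As $G/P$ is compact, $\mu$ is proper, hence $W:=\mu^{-1}(\Delta\cap H)\subseteq(G/P,\omega_\lambda)$ is a connected, proper Hamiltonian $(S^1)^n$-manifold whose momentum image is exactly $\Psi(\mathfrak S^n(a))+x$: closed along the $n$ facets meeting at the vertex $x$, open only along the cut $\partial H$. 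The closed-corner sharpening of Proposition~\ref{embedding} --- the one going back to \cite[Proposition 2.8]{KT}, see also \cite{LuSC} --- then yields a symplectic embedding of a ball of capacity exactly $a$ into $W$, and hence into $(G/P,\omega_\lambda)$. Just as in Section~\ref{methods}, the presence of $\Psi\in GL(n,\Z)$ and of the shift $x$ is harmless: the former encodes the non-canonical identification $\mathfrak s^*\cong\R^n$ and the latter the ambiguity in the choice of momentum map.

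I expect the main obstacle to lie in the toric bookkeeping inside Step 1: pinning down exactly which torus-invariant strata of the a priori non-normal variety $X_0=\mathrm{Proj}\,\C[\Gamma_{\lambda,\underline{\beta}}]$ are nonsingular, translating this faithfully into the face combinatorics of $\Delta$ via the normalization, and matching it with the precise locus on which the Harada--Kaveh gradient--Hamiltonian flow is a symplectomorphism carrying the $(S^1)^N$-action back to $G/P$ --- being careful that it is the nonsingular locus of the degenerate fibre $X_0$, not merely that of the abstract toric variety $X_\Delta$, which governs where the action lives, and that $\Delta\cap H$ really does land inside $\Delta^{\mathrm{sm}}$. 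Everything else is either quoted from \cite{HK,KT,LuSC} or is elementary convex geometry.
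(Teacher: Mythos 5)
Your overall route coincides with the paper's: the first claim is obtained from the Harada--Kaveh surjection $\phi\colon G/P\to X_0$ (gradient--Hamiltonian flow), which by \cite[Corollary 2.10 and the proof of Theorem 2.19]{HK} is a symplectomorphism over the smooth locus $U_0$ of $X_0$ and carries the torus action on $\phi^{-1}(U_0)$; the second claim is then the standard ``corner'' argument of Section~\ref{methods} (\cite[Proposition 2.8]{KT}, \cite{LuSC}) applied to $\mu^{-1}(\Delta_\lambda(\underline{\beta})\cap H)$, exactly as in your Step~2. The difference is where the weight of the proof sits. The paper's entire argument for the first claim is: cite the two HK statements above, and then observe that the normalization $X_{\Delta_\lambda(\underline{\beta})}\to X_0$ induces a bijection on smooth points, so that preimages of smooth points of the polytope are precisely $\phi^{-1}(U_0)$-type points where the action and the symplectomorphism are available.

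That last sentence is exactly the step you do not supply: you write that smoothness of the face ``should force'' $X_0$ to be nonsingular along the corresponding orbit, proceed ``granting this'', and your final paragraph names this as the main obstacle. This is a genuine gap, because the implication is not formal: normalization does not in general send smooth points to smooth points (the normalization $\mathbb{A}^1$ of a cuspidal cubic is smooth over the cusp), so the orbit--face dictionary for the \emph{normal} toric variety $X_{\Delta_\lambda(\underline{\beta})}$ alone cannot decide nonsingularity of the possibly non-normal fibre $X_0=\mathrm{Proj}\,\mathbb C[\Gamma_{\lambda,\underline{\beta}}]$; some statement specific to this situation --- the paper's claim that the normalization map is a bijection on smooth points --- is what you would have to prove or locate to close Step~1, and it is the only nontrivial content of the corollary's proof. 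A smaller imprecision in your Step~2: it is not literally true that the faces of $\Delta_\lambda(\underline{\beta})$ touching $\Delta_\lambda(\underline{\beta})\cap H$ are smooth polytopes, since such a face may extend beyond $H$ and be singular there; what one actually uses is that $H$ is open and $\Delta_\lambda(\underline{\beta})\cap H$ equals the unimodular corner $\Psi(\mathfrak S^n(a))+x$ on the nose, so every point of $\Delta_\lambda(\underline{\beta})\cap H$ has a neighbourhood in $\Delta_\lambda(\underline{\beta})$ identical to a neighbourhood in a simplex (for which all points are smooth), which is the form in which the paper applies the first claim.
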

\begin{proof}
 The first claim is proved in \cite{HK}, though not stated explicitly, and the second is an immediate corollary.
The integrable system on $G/P$ is induced from the integrable system on the toric variety $X_0$, using a surjective continuous map $\phi \colon G/P \rightarrow X_0$ (see the proof of Theorem 2.19 of \cite{HK}). The map $\phi$ is a symplectomorphism when restricted to $\phi^{-1}(U_0)$, where $U_0$ is the smooth locus of $X_0$ (Corollary 2.10 of \cite{HK}), and the toric action is defined on $\phi^{-1}(U_0)$. 
As the normalization map $X_{\Delta_\lambda(\underline{\beta})}\rightarrow X_0$ induces a bijection on smooth points, the claim follows.
\end{proof}
To show the finite generation of the monoid $\Gamma_{\lambda,\,\underline{\beta}}$ is usually (not only in the flag variety case) a difficult problem. 
Some special cases are presented in Section~\ref{essmon}.

The Newton-Okounkov body may provide interesting information even if $\Gamma_{\lambda,\,\underline{\beta}}$ is \emph{not necessarily finitely generated}.
Kaveh in  \cite[Corollary 12.3 and 12.4]{K} showed that even then one can still form a family $\pi:\frak X\rightarrow \mathbb C$ such that $X_z=\pi^{-1}(z)$ is isomorphic to $G/P$ for $z\neq 0$ and 
$X_0=\pi^{-1}(0)$ is isomorphic to $(\mathbb C^*)^N$.
Recall that by $\textrm{int}\,\mathfrak S^N(r)$ we denote the interior of the $N$-dimensional simplex of size $r$.
\begin{theorem}[\cite{K}]\label{summary3}
The Gromov width of $(G/P,\omega_\lambda)$ is at least R, where R is the supremum of
the sizes of open simplices  that fit (up to $GL(N,\Z)$ transformation) in the interior of the Newton-Okounkov body $\Delta_\lambda (\underline{\beta})$, \emph{i.e.},
$$R= \sup \{r>0\,;\, \exists \; \Psi \in GL(N,\Z), x \in
\R^N\textrm{ so that }\Psi (\textrm{int }\mathfrak S^N(r))+\,x \subset \Delta_\lambda (\underline{\beta}) \}.$$
\end{theorem}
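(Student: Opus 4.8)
The plan is to exhibit large balls inside $(G/P,\omega_\lambda)$ by realising the open simplices that fit in $\Delta_\lambda(\underline{\beta})$ as images of balls under a momentum map defined on an open dense subset of $G/P$, and then to invoke Proposition~\ref{embedding}. First I would set up the degeneration: even when $\Gamma_{\lambda,\,\underline{\beta}}$ is not finitely generated, Kaveh's construction (\cite[Cor.~12.3 and 12.4]{K}) provides a flat family $\pi\colon\mathfrak X\to\mathbb C$ with $X_z\cong G/P$ for $z\neq 0$ and special fibre $X_0\cong(\mathbb C^*)^N$, where $(\mathbb C^*)^N$ is the big torus acting on itself in the standard way, and where the image of the standard momentum map for the $(S^1)^N$-action on $(\mathbb C^*)^N$ (with the appropriate Kähler structure coming from the family) is the interior $\mathrm{int}\,\Delta_\lambda(\underline{\beta})$.

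Next I would transfer the toric structure to $G/P$ exactly as in the gradient-Hamiltonian flow argument of Harada and Kaveh used in Theorem~\ref{summary2} and Corollary~\ref{cor action on smooth pts}. The family $\pi$ yields a continuous surjection $\phi\colon G/P\to X_0$ that is a symplectomorphism over the smooth locus of $X_0$; since $X_0\cong(\mathbb C^*)^N$ is smooth, $\phi$ restricts to a symplectomorphism $\phi^{-1}\big((\mathbb C^*)^N\big)\xrightarrow{\ \sim\ }(\mathbb C^*)^N$. Pulling back the $(S^1)^N$-momentum map on $(\mathbb C^*)^N$ along $\phi$ produces an integrable system $\mu\colon G/P\to\mathbb R^N$ with $\mu(G/P)=\Delta_\lambda(\underline{\beta})$, together with an effective Hamiltonian $(S^1)^N$-action on the open dense subset $U:=\mu^{-1}\big(\mathrm{int}\,\Delta_\lambda(\underline{\beta})\big)$ for which $\mu|_U$ is a momentum map. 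Since $G/P$ is compact, $\mu$ is proper, hence so is $\mu|_U\colon U\to\mathrm{int}\,\Delta_\lambda(\underline{\beta})$; and because $\mathrm{int}\,\Delta_\lambda(\underline{\beta})$ is connected while the fibres of $\mu|_U$ are tori, $U$ is connected. Thus $(U,\omega_\lambda|_U)$ is a connected proper Hamiltonian $(S^1)^N$-space of dimension $2N$ with momentum-map image $\mathrm{int}\,\Delta_\lambda(\underline{\beta})$.

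Finally I would conclude. Applying Proposition~\ref{embedding} to $(U,\omega_\lambda|_U)$, its Gromov width is at least $\sup\{r>0\,;\ \exists\,\Psi\in GL(N,\Z),\ x\in\R^N,\ \Psi(\mathrm{int}\,\mathfrak S^N(r))+x\subset\mathrm{int}\,\Delta_\lambda(\underline{\beta})\}$. As any open subset of $\R^N$ contained in the convex body $\Delta_\lambda(\underline{\beta})$ is automatically contained in its interior, this supremum is exactly $R$. Since the inclusion $U\hookrightarrow G/P$ is a symplectic embedding, monotonicity of the Gromov width gives $\mathrm{gw}(G/P,\omega_\lambda)\ge\mathrm{gw}(U)\ge R$, as claimed.

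The one genuinely delicate point is the first two steps: one must know that the Harada--Kaveh gradient-Hamiltonian flow machinery still functions when the central fibre is only the non-compact, non-projective torus $(\mathbb C^*)^N$ instead of a projective toric variety, and that the resulting integrable system has image precisely $\Delta_\lambda(\underline{\beta})$ with the toric action defined over the entire interior. This is exactly what Kaveh establishes in \cite[\S 12]{K}, and I would invoke it directly rather than reprove it; the remaining steps (properness, connectedness of $U$, the equality of the two suprema, and monotonicity) are routine.
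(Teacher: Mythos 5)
The paper itself does not prove Theorem~\ref{summary3}: it is imported wholesale from Kaveh \cite[Corollaries 12.3 and 12.4]{K}, so your plan of deferring the hard analytic input to \cite[\S 12]{K} and then doing the bookkeeping with Proposition~\ref{embedding} (properness and connectedness of $U$, monotonicity of the Gromov width, equality of the two suprema) is in substance the same route the paper takes, and that bookkeeping part of your argument is fine.

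However, two intermediate claims in your outline would fail as literally stated, and they are exactly where the possible non-finite-generation of $\Gamma_{\lambda,\underline{\beta}}$ bites. First, there can be no continuous surjection $\phi\colon G/P\to X_0$ when $X_0\cong(\mathbb C^*)^N$: $G/P$ is compact and $(\mathbb C^*)^N$ is not, so the Harada--Kaveh gradient-Hamiltonian flow argument used in Theorem~\ref{summary2} and Corollary~\ref{cor action on smooth pts} (which relies on a proper family with projective central fibre) cannot be transplanted verbatim. Second, and relatedly, without finite generation you are not entitled to an integrable system on all of $G/P$ whose image is exactly $\Delta_\lambda(\underline{\beta})$ and whose torus action lives over the entire interior --- that is precisely Theorem~\ref{summary2}, whose hypothesis you are trying to drop. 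Kaveh's actual argument circumvents this by approximating the value semigroup by finitely generated graded subalgebras/subsemigroups, whose Okounkov bodies are rational polytopes exhausting the interior of $\Delta_\lambda(\underline{\beta})$; for each such approximation the central fibre is projective, the degeneration machinery applies, and one obtains toric charts whose momentum images only approximate $\mathrm{int}\,\Delta_\lambda(\underline{\beta})$ from inside. Any fixed open simplex $\Psi(\mathrm{int}\,\mathfrak S^N(r))+x$ contained in $\Delta_\lambda(\underline{\beta})$ is (after an arbitrarily small shrinking) captured by some approximation, and since the asserted bound is a supremum these $\varepsilon$-losses are harmless. So the theorem does follow, but by this limiting argument --- which is the real content of \cite[Cor.~12.3, 12.4]{K} --- and not by the exact-image, global-surjection statements in your second step; as a standalone proof your write-up has a gap there, while as a citation-based proof it coincides with what the paper does.
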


\begin{remark}\rm
The study of embeddings of simplices in Newton-Okounkov bodies turns out to be useful also for other purposes.
For example, see the work of K\"uronya and Lozovanu \cite{KL1,KL2}, who study the positivity for divisors in terms of convex geometry, 
or the work of Ito \cite{I} (see also \cite{N}), who studies Seshadri constants.
\end{remark}

We finish this section with a remark about a relation of our result to the global Seshadri constant. 
Recall that for a complex manifold $(M,J)$ and an ample line bundle $\mathcal{L}$ over $M$ the {\it Seshadri constant} of $\mathcal{L}$ at a point $p \in M$ is defined as the nonnegative real number 
$$\epsilon (M,\mathcal{L},p):=\inf_C \frac{\int_C c_1(\mathcal{L})}{mult_pC},$$
where the infimum is taken over all irreducible holomorphic curves C passing through the point $p$, and $mult_pC$ is the multiplicity of $C$ at $p$. The global Seshadri constant is defined as $\epsilon(M,\mathcal{L}):= \inf_{p \in M}\,\epsilon (M,\mathcal{L},p).$ The cohomology class $c_1(\mathcal{L})$ can be represented by a $J$-compatible K\"ahler form $\omega_\mathcal{L}$, 
and the symplectomorphism type of $(M,\omega_\mathcal{L})$ depends only on the cohomology class $[c_1(\mathcal{L})]=[\omega_\mathcal{L}]$.
Biran and Cieliebak in \cite[Proposition 6.3]{BC} showed that 
$$\epsilon(M,\mathcal{L}) \leq \textrm{Gromov width of }(M,\omega_\mathcal{L}).$$
Moreover, \cite[Theorem 1.2 and Lemma 4.4]{I} of Ito (see also \cite{N}) imply that if $\overline{\mathfrak S^n(a)}$ is a subset of the Newton-Okounkov body associated to the line bundle $\mathcal{L}$ and a lowest term valuation for some coordinate system on $M$, then $\epsilon(M,\mathcal{L})$ is at least the Seshadri constant of the toric variety and the line bundle associated to the polytope $\overline{\mathfrak S^n(a)}$, i.e. of the complex projective space with $\frac{a}{\pi}\omega_{FS}$ (whose Seshadri constant is $a$). For this conclusion of Ito it is not necessary to assume that the associated monoid is finitely generated. Therefore Corollary \ref{cor gw} implies the following.
\begin{corollary}
Let $K$ be a compact connected simple Lie group.
The global Seshadri constant of a coadjoint orbit $\mathcal{O}_\lambda$ through an integral $\lambda$, with the line bundle $\mathfrak{\mathcal{L}}_{\lambda}$, is given by \eqref{gw formula}.
\end{corollary}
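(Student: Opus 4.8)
The plan is to reduce the final corollary to a direct combination of two already-cited results. By Corollary~\ref{cor gw}, the Gromov width of $(\mathcal{O}_\lambda,\omega_\lambda^{KKS})$ for integral $\lambda$ equals the number \eqref{gw formula}; since $\mathcal{O}_\lambda$ is symplectomorphic to $(G/P,\omega_\lambda)$ and $\omega_\lambda$ represents $c_1(\mathcal{L}_\lambda)$, this gives the Gromov width of $(G/P,\omega_\lambda)$ as well. The Biran--Cieliebak inequality $\epsilon(G/P,\mathcal{L}_\lambda)\le \text{Gromov width}$ of \cite[Proposition 6.3]{BC} then furnishes the upper bound $\epsilon(G/P,\mathcal{L}_\lambda)\le \eqref{gw formula}$. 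So the only thing left is the matching lower bound for the global Seshadri constant.

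For the lower bound I would invoke exactly the mechanism spelled out in the paragraph preceding the statement: Theorem~\ref{summary3} (or rather the simplex constructions it relies on, carried out in Sections~\ref{good orderings},~\ref{convex ordering},~\ref{cominuscule telescopes}) produces, for a suitable enumeration $\underline\beta$, a standard simplex $\overline{\mathfrak S^N(a)}$ with $a$ equal to \eqref{gw formula} sitting inside the Newton--Okounkov body $\Delta_\lambda(\underline\beta)$ associated to $\mathcal{L}_\lambda$ and the lowest-term valuation $\nu$ in the coordinate system on $G/P$ coming from the birational chart $U^-_P\hookrightarrow G/P$. Then I would apply \cite[Theorem~1.2 and Lemma~4.4]{I}: whenever such a translated copy of $\overline{\mathfrak S^N(a)}$ lies in the Newton--Okounkov body, $\epsilon(M,\mathcal{L})$ is bounded below by the Seshadri constant of the polarized toric variety attached to the polytope $\overline{\mathfrak S^N(a)}$, which is $(\mathbb P^N,\tfrac{a}{\pi}\omega_{FS})$ and has Seshadri constant $a$. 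Hence $\epsilon(G/P,\mathcal{L}_\lambda)\ge a=\eqref{gw formula}$. Crucially, Ito's result does not need finite generation of $\Gamma_{\lambda,\underline\beta}$, so this works uniformly for all simple $K$.

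Combining the two inequalities gives $\epsilon(G/P,\mathcal{L}_\lambda)=\eqref{gw formula}$, and transporting back along the symplectomorphism $\mathcal{O}_\lambda\cong G/P$ (which matches $\mathcal{L}_\lambda$ and hence its Seshadri data) yields the claim. One subtlety I would be careful about: Ito's statement is phrased for a lowest-term valuation with respect to \emph{some} coordinate system on $M$, and one must check that the valuation $\nu$ used here — built from the right-lexicographic order on the root-subgroup coordinates $x_{\beta_1},\dots,x_{\beta_N}$ on the open chart $U^-_P$ — is of the required type and that the translation/unimodular-transformation freedom ($\Psi\in GL(N,\Z)$, $x\in\R^N$) in the placement of the simplex is compatible with Ito's hypotheses; a unimodular change of coordinates on the torus chart corresponds to replacing $\nu$ by $\Psi\circ\nu$, which is again a lowest-term valuation, so this is harmless. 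That compatibility check, together with correctly citing where the simplex of size \eqref{gw formula} is constructed, is the only real content beyond quoting \cite{BC} and \cite{I}; I expect it to be routine given the machinery already set up in the preceding sections.
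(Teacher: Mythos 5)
Your proposal matches the paper's own argument: the upper bound comes from the Biran--Cieliebak inequality combined with Corollary~\ref{cor gw}, and the lower bound from Ito's result \cite[Theorem 1.2 and Lemma 4.4]{I} applied to the closed simplex of size \eqref{gw formula} contained in the Newton--Okounkov body (Theorem~\ref{Prop:goodordering}), with no finite-generation hypothesis needed. Your extra remark about the lowest-term valuation and the $GL(N,\Z)$ freedom is fine but not even necessary here, since the constructed simplex is already the standard one at the origin in the chart coordinates $x_{\beta_1},\dots,x_{\beta_N}$.
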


\section{The proof of the main result.}\label{proof}
With this background reviewed, we are now ready to present the proof of Theorem \ref{theorem lb gw}, up to 
the detailed analysis of the Newton-Okounkov bodies, which is postponed to the next sections. 
\begin{proof}[Proof of Theorem \ref{theorem lb gw}]
We first prove Theorem \ref{theorem lb gw} for integral $\lambda$ in a positive Weyl chamber, \emph{i.e.}, for a dominant integral weight.
Then the coadjoint orbit $(\mathcal{O}_{\lambda}, \omega^{KKS})$ is symplectomorphic to a flag manifold $(G/P, \omega_{\lambda})$ with a symplectic structure pulled back via Kodaira embedding $G/P\hookrightarrow \mathbb P(\mathrm{H}^0(G/P,\mathcal L_\lambda)^*)$.

In the previous section, we explained how to associate a valuation on the space of sections of $\mathcal L_\lambda$ (and of its tensor products) to a given enumeration $\underline{\beta}=\{\beta_1,\ldots,\beta_N\}$ of roots in $\Phi^+_{P_{}}$ (see \eqref{def_phi_p} for the definition of $\Phi^+_P$). 
From there one obtains a Newton-Okounkov body $\Delta_{\lambda}(\underline{\beta})$.
In Section \ref{good orderings} we fix an enumeration $\underline{\beta}=\{\beta_1,\ldots,\beta_N\}$
of the positive roots in question and analyze the associated Newton-Okounkov bodies $\Delta_{\lambda}(\underline{\beta})$.
In Theorem \ref{Prop:goodordering} we show that the body contains an open simplex of  the size precisely as in \eqref{gw formula}.
Then Theorem \ref{summary3} gives that the Gromov width of $(G/P, \omega_{\lambda})$ is greater or equal \eqref{gw formula}, what proves Theorem \ref{theorem lb gw} for an integral $\lambda$. (See Section~\ref{convex ordering} and~\ref{cominuscule telescopes} for other enumerations.)

If $\lambda \neq 0$ is on a rational line, i.e. there exists $\ell \in \R \setminus \{0\}$ such that $\ell \lambda$ is integral, then 
$(\mathcal{O}_{\lambda}, \ell \omega^{KKS})$ is symplectomorphic to $(\mathcal{O}_{\ell\lambda},  \omega^{KKS})$
and thus 
the Gromov width of $(\mathcal{O}_{\lambda},  \omega^{KKS})$ is $\frac{1}{\ell}$ of the 
Gromov width of $(\mathcal{O}_{\ell\lambda},  \omega^{KKS})$, \emph{i.e.,}
\begin{eqnarray*}
& &\frac{1}{\ell} \min\{\, \left|\left\langle \alpha^{\vee},\ell\lambda \right\rangle \right|;\  \alpha^{\vee} \textrm{ a  coroot and }\left\langle \alpha^{\vee},\lambda \right\rangle \neq0\}\\
&=& \min\{\, \left|\left\langle \alpha^{\vee},\lambda \right\rangle \right|;\  \alpha^{\vee} \textrm{ a  coroot and }\left\langle \alpha^{\vee},\lambda \right\rangle \neq0\}.
\end{eqnarray*}
\end{proof}
\begin{remark}
Of course one would like to extend this result to orbits $\mathcal{O}_{\lambda}$ with arbitrary $\lambda$. Such extension would immediately follow from upper semi-continuity
\footnote{Adjusting a ``Moser type" argument from  \cite{MP} one can show that the Gromov width of $\mathcal{O}_{\lambda}$ is lower semi-continuous.} 
of the Gromov width of $\mathcal{O}_{\lambda}$
 as a function of $\lambda$.
Paul Biran once claimed that he expects all obstructions to embeddings of balls to come from $J$-holomorphic curves. If this expectation turns out to be true, one could prove the upper semi-continuity.
Note that an important implication of the above conjecture of Biran is that the Gromov width of integral symplectic manifolds must be greater or equal to one. This statement was proved, under certain assumption, by Kaveh in \cite{K}.
\end{remark}
\begin{proof}[Proof of Proposition \ref{prop gw attained}]
To prove Proposition \ref{prop gw attained} one repeats the proof of Theorem \ref{theorem lb gw} but using enumerations presented in Section~\ref{cominuscule telescopes}. 
The result follows from Theorem \ref{Thm:cominuscule} and Corollary \ref{cor action on smooth pts}.
\end{proof}


\section{Essential monomials}\label{essmon}
Let $\lambda$ be a dominant integral weight and let $P=P_\lambda$ be the associated parabolic subgroup. Recall that 
$P$ is by definition a standard parabolic subgroup, i.e. $B\subseteq P$.
In \cite{FFL}, the first two authors 
give a representa\-tion-theoretic construction of the monoid
$\Gamma_{\lambda, \underline{\beta}}$ coming from an enumeration $\underline{\beta}=\{\beta_1,\ldots,\beta_N\}$ of roots in $\Phi^+_P$
(see \eqref{def_phi_p} for the definition of $\Phi^+_P$).
For the rest of this section, we fix the enumeration $\underline{\beta}$ and remove it from the notations for simplicity.
 
\subsection{Filtration arising from birational sequences}\label{afiltration} 
Let $\mathfrak n^-_P$ be the Lie algebra of $U^-_P$. For each positive root $\beta$, let $F_\beta$ be a generator of $\mathfrak g_{-\beta}$, $E_\beta$ be a generator of $\mathfrak g_{\beta}$ such that the sub-algebra generated by $E_\beta$, $F_\beta$ and $\beta^\vee$ is isomorphic to $\mathfrak{sl}_2$.
The vectors $\{F_\beta; \beta\in\Phi^+_P\}$
form a vector space basis of $\mathfrak n^-_{P}$. As a vector space, the enveloping algebra $U(\mathfrak n^-_{P})$ admits a PBW-basis (\emph{i.e.}, ordered monomials in the
root vectors):
$$
\{ F^{\underline{\mathbf m}}=F_{\beta_1}^{m_1}\cdots F_{\beta_N}^{m_N}; \underline{\mathbf m}=(m_1,m_2,\ldots,m_N)\in \mathbb N^N\}.
$$
Let $>_{\mathrm{or}}$ be the right opposite lexicographic order on $\mathbb N^N$: that is to say, $\underline{\mathbf m}>_{\mathrm{or}} \underline{\mathbf k}$ if and only if $\underline{\mathbf m}<_{\mathrm{r}} \underline{\mathbf k}$.
We use this total order to define for $\underline{\mathbf m}\in \mathbb N^N$ subspaces of $U(\mathfrak n^-_P)$ as follows:
$$
U(\mathfrak n^-_P)_{<_{\mathrm{or}} \underline{\mathbf m}}=\sspan_{\mathbb{C}}\{ F^{\underline{\mathbf k}};\underline{\mathbf k}<_{\mathrm{or}} \underline{\mathbf m}\},
\quad
U(\mathfrak n^-_P)_{\le_{\mathrm{or}} \underline{\mathbf m}}=\sspan_{\mathbb{C}}\{ F^{\underline{\mathbf k}};\underline{\mathbf k}\le_{\mathrm{or}} \underline{\mathbf m}\}.
$$
For an irreducible representation $V(\lambda)$, we have induced subspaces of $V(\lambda)$:
\begin{equation}\label{filt1}
V(\lambda)_{<_{\mathrm{or}} \underline{\mathbf m}}=(U(\mathfrak n^-_P)_{<_{\mathrm{or}} \underline{\mathbf m}})\cdot v_\lambda,\quad
V(\lambda)_{\le_{\mathrm{or}} \underline{\mathbf m}}=(U(\mathfrak n^-_P)_{\le_{\mathrm{or}} \underline{\mathbf m}})\cdot v_\lambda.
\end{equation}
The subquotient $V(\lambda)_{\le_{\mathrm{or}} \underline{\mathbf m}}/V(\lambda)_{<_{\mathrm{or}} \underline{\mathbf m}}$
 is obviously at most one dimensional.
\begin{definition}\rm
A tuple $\underline{\mathbf m}\in \mathbb N^N$, the monomial $F^{\underline{\mathbf m}}$ and the vector $F^{\underline{\mathbf m}} v_\lambda$ 
are called {\it essential} for $V(\lambda)$, if the subquotient
$V(\lambda)_{\le_{\mathrm{or}} \underline{\mathbf m}}/V(\lambda)_{<_{\mathrm{or}} \underline{\mathbf m}}$ is of dimension one.
\end{definition}

Denote the set of all essential tuples
for $V(\lambda)$ by
$$
\es_{P}(\lambda)=\{\underline{\mathbf m}\in\mathbb N^N; \underline{\mathbf m}\text{\ is essential for\ }V(\lambda)\}.
$$

\subsection{Essential monoids and global version}\label{anothermonoid}

It has been shown in \cite[Proposition 1]{FFL} that for integers $\ell,k\geq 1$, 
$$\{\ell\}\times\es_{P}(\ell\lambda)+\{k\}\times\es_{P}(k\lambda)\subset \{\ell+k\}\times\es_{P}((\ell+k)\lambda)$$ 
(here $+$ stands for the Minkowski sum of two sets), therefore the set
$$
\Es_{P}(\lambda)=\bigcup_{\ell\in\mathbb N}\,\{\ell\}\times\es_{P}(\ell\lambda)\subset \mathbb N\times\mathbb N^N
$$ 
is naturally endowed with the structure of a submonoid of $\mathbb N\times\mathbb N^N$.
Moreover: 
\begin{theorem}[\cite{FFL}]\label{gammagleichess} 
The graded submonoids $\Gamma_\lambda$ and $\Es_{P}(\lambda)$ of $\mathbb N\times\mathbb N^N$ coincide.
\end{theorem}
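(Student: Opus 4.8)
The plan is to prove the equality one graded piece at a time: for each integer $\ell\geq 1$ I would show that
$$\{\nu(s/s_0^\ell)\;;\;s\in\mathrm{H}^0(G/P,\mathcal L_\lambda^{\otimes\ell})\}=\es_P(\ell\lambda),$$
and then take the union over $\ell$. Both $\Gamma_\lambda$ and $\Es_P(\lambda)$ are already known to be submonoids of $\mathbb N\times\mathbb N^N$ graded by the first coordinate (for $\Es_P(\lambda)$ by \cite[Proposition 1]{FFL}, for $\Gamma_\lambda$ by multiplicativity of the valuation), so this is the entire content of the theorem. Throughout one uses $\supp(\ell\lambda)=\supp(\lambda)$, hence $P_{\ell\lambda}=P=P_\lambda$, and the Borel--Weil isomorphism $\mathrm{H}^0(G/P,\mathcal L_\lambda^{\otimes\ell})\cong V(\ell\lambda)^*$.

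First I would make the restriction of a section to the big cell explicit in the coordinates $x_{\beta_1},\ldots,x_{\beta_N}$. Let $\xi\in V(\ell\lambda)^*$ correspond to a section $s$. The highest weight section $s_0$ corresponds to the covector of weight $-\lambda$; since every $u\in U^-$ satisfies $u\cdot v_{\ell\lambda}=v_{\ell\lambda}+(\text{lower weight terms})$ and the weight $\ell\lambda$ occurs only on $v_{\ell\lambda}$, the pairing $\langle s_0^\ell,u\cdot v_{\ell\lambda}\rangle$ is a nonzero constant. Hence $s_0^\ell$ is nowhere zero on $U^-_P$, and $s/s_0^\ell$ restricts there to the regular function $u\mapsto\langle\xi,u\cdot v_{\ell\lambda}\rangle$ up to that constant; in particular $\nu$ is $\mathbb N^N$-valued on such functions. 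Writing $u\in U^-_P$ via the product isomorphism $\pi$ as $u=\exp(x_{\beta_1}F_{\beta_1})\cdots\exp(x_{\beta_N}F_{\beta_N})$ and expanding each exponential,
$$u\cdot v_{\ell\lambda}=\sum_{\underline{\mathbf m}\in\mathbb N^N}\tfrac{1}{\underline{\mathbf m}!}\,x^{\underline{\mathbf m}}\,F^{\underline{\mathbf m}}v_{\ell\lambda},\qquad F^{\underline{\mathbf m}}=F_{\beta_1}^{m_1}\cdots F_{\beta_N}^{m_N},$$
so that $\nu(s/s_0^\ell)=\min_{>_{\mathrm r}}\{\underline{\mathbf m}\;;\;\langle\xi,F^{\underline{\mathbf m}}v_{\ell\lambda}\rangle\neq 0\}$.

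Then the proof would be finished by a matching of definitions, exploiting that $>_{\mathrm{or}}$ is the reverse of $>_{\mathrm r}$. Because $F_\beta v_{\ell\lambda}=0$ whenever $\mathfrak g_{-\beta}\subset\mathrm{Lie}\,P$, the vectors $\{F^{\underline{\mathbf k}}v_{\ell\lambda}\}_{\underline{\mathbf k}\in\mathbb N^N}$ span $V(\ell\lambda)$; unravelling the filtration, $V(\ell\lambda)_{<_{\mathrm{or}}\underline{\mathbf m}}=\sspan_{\mathbb C}\{F^{\underline{\mathbf k}}v_{\ell\lambda}\;;\;\underline{\mathbf k}>_{\mathrm r}\underline{\mathbf m}\}$, and $\underline{\mathbf m}$ is essential for $V(\ell\lambda)$ exactly when $F^{\underline{\mathbf m}}v_{\ell\lambda}\notin V(\ell\lambda)_{<_{\mathrm{or}}\underline{\mathbf m}}$. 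If $\underline{\mathbf m}=\nu(s/s_0^\ell)$ with $s\leftrightarrow\xi$, then $\xi$ annihilates $V(\ell\lambda)_{<_{\mathrm{or}}\underline{\mathbf m}}$ while $\langle\xi,F^{\underline{\mathbf m}}v_{\ell\lambda}\rangle\neq 0$, so $\underline{\mathbf m}\in\es_P(\ell\lambda)$. Conversely, given $\underline{\mathbf m}\in\es_P(\ell\lambda)$, one has $F^{\underline{\mathbf m}}v_{\ell\lambda}\notin V(\ell\lambda)_{<_{\mathrm{or}}\underline{\mathbf m}}$, so by duality there is $\xi\in V(\ell\lambda)^*$ vanishing on $V(\ell\lambda)_{<_{\mathrm{or}}\underline{\mathbf m}}$ with $\langle\xi,F^{\underline{\mathbf m}}v_{\ell\lambda}\rangle\neq 0$, and the associated section has valuation exactly $\underline{\mathbf m}$. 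This yields both inclusions.

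The hard part is the coordinate computation in the middle step. One must check both that $s/s_0^\ell|_{U^-_P}$ really is the matrix coefficient $u\mapsto\langle\xi,u\cdot v_{\ell\lambda}\rangle$ (via the weight argument showing $s_0$ trivializes the bundle on the big cell, which also guarantees $\nu\geq 0$) and, most importantly, that the monomial order on $\mathbb C[x_{\beta_1},\ldots,x_{\beta_N}]$ induced by the chosen enumeration $\underline\beta$ is transported by $\pi$ to precisely the PBW order on $U(\mathfrak n^-_P)$ used in the definition of $\es_P$. Once the two orders are aligned, everything else is Borel--Weil plus the elementary linear-algebra duality between ``$\xi$ kills an upper set of PBW monomials'' and ``$\xi$ has leading exponent $\underline{\mathbf m}$''.
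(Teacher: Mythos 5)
The paper itself gives no proof of this statement (it is quoted from \cite{FFL}), so there is nothing to compare route-wise; your overall strategy --- reduce to graded pieces, identify $\mathrm{H}^0(G/P,\mathcal L_\lambda^{\otimes\ell})$ with $V(\ell\lambda)^*$, trivialize on the big cell by $s_0^\ell$, expand the matrix coefficient in the coordinates coming from $\pi$, and finish by linear-algebra duality --- is indeed the expected one. The problem is that at the decisive duality step you cross the two orders. From your own formula $\nu(s/s_0^\ell)=\min_{>_{\mathrm r}}\{\underline{\mathbf m}\,;\,\langle\xi,F^{\underline{\mathbf m}}v_{\ell\lambda}\rangle\neq0\}$, what follows is that $\xi$ kills $F^{\underline{\mathbf k}}v_{\ell\lambda}$ for all $\underline{\mathbf k}<_{\mathrm r}\underline{\mathbf m}$, i.e.\ for all $\underline{\mathbf k}>_{\mathrm{or}}\underline{\mathbf m}$; it says nothing about the vectors with $\underline{\mathbf k}<_{\mathrm{or}}\underline{\mathbf m}$ (equivalently $\underline{\mathbf k}>_{\mathrm r}\underline{\mathbf m}$), and those are exactly the vectors spanning $V(\ell\lambda)_{<_{\mathrm{or}}\underline{\mathbf m}}$. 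So ``then $\xi$ annihilates $V(\ell\lambda)_{<_{\mathrm{or}}\underline{\mathbf m}}$'' is a non sequitur, and the converse direction fails at the same spot: a functional vanishing on $V(\ell\lambda)_{<_{\mathrm{or}}\underline{\mathbf m}}$ with $\xi(F^{\underline{\mathbf m}}v_{\ell\lambda})\neq0$ may well pair nontrivially with some $F^{\underline{\mathbf k}}v_{\ell\lambda}$, $\underline{\mathbf k}<_{\mathrm r}\underline{\mathbf m}$, and then the valuation of the associated section is strictly smaller than $\underline{\mathbf m}$. What your argument actually establishes is that the degree-$\ell$ values of $\nu$ are the tuples $\underline{\mathbf m}$ with $F^{\underline{\mathbf m}}v_{\ell\lambda}\notin\sspan\{F^{\underline{\mathbf k}}v_{\ell\lambda}\,;\,\underline{\mathbf k}<_{\mathrm r}\underline{\mathbf m}\}$, which is the ``greedy'' set for the order $\le_{\mathrm r}$, not the set $\es_P(\ell\lambda)$ defined through the $\le_{\mathrm{or}}$-filtration.

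These two sets genuinely differ, so the step cannot be waved through. For instance, take $\mathfrak g=\mathfrak{sl}_3$, $\lambda=\rho$, and the (good) enumeration $(\beta_1,\beta_2,\beta_3)=(\alpha_2,\alpha_1,\alpha_1+\alpha_2)$: the monomials $F^{(2,1,0)}=F_{\alpha_2}^2F_{\alpha_1}$ and $F^{(1,0,1)}=F_{\alpha_2}F_{\alpha_1+\alpha_2}$ both send $v_\rho$ to nonzero multiples of the same vector (the $-\alpha_2$ weight space of the adjoint representation is one dimensional), and $(2,1,0)<_{\mathrm r}(1,0,1)$; the $\le_{\mathrm r}$-greedy condition selects $(2,1,0)$, whereas the $\le_{\mathrm{or}}$-filtration defining essential tuples selects $(1,0,1)$. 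This shows that the entire content of the theorem sits in the point you yourself flagged as ``the hard part'': one must determine how $s/s_0^\ell$ really looks in the coordinates $x_{\beta_1},\ldots,x_{\beta_N}$ --- in particular whether the coefficient of $x^{\underline{\mathbf m}}$ pairs $\xi$ with $F_{\beta_1}^{m_1}\cdots F_{\beta_N}^{m_N}v_{\ell\lambda}$ or with the opposite-order monomial (the trivialization may involve the group element or its inverse, which reverses the product), and accordingly to which of the filtrations ($\le_{\mathrm r}$ or $\le_{\mathrm{or}}$) the lowest-term condition dualizes. As written, you take the first option in the expansion and the second in the filtration, and these are incompatible; repairing the proof requires carrying out this bookkeeping so that the valuation order and the filtration order actually correspond under the pairing, as is done in \cite{FFL}, rather than asserting the alignment one set out to prove.
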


Let $\mu$ be a dominant integral weight such that 
$\supp(\mu)\subseteq \supp(\lambda)$. In this case $V(\mu)$ is still a cyclic $U(\mathfrak n^-_P)$-module,
so the filtration described in \eqref{filt1} for $V(\lambda)$ also makes sense for $V(\mu)$, and so does the notation
of an essential tuple. We set 
$$
\Es_{P}=\{(\mu,\underline{\mathbf m})\in \Lambda^+\times\mathbb N^N; 
\supp(\mu)\subseteq\supp(\lambda),\,\underline{\mathbf m}\text{\ is essential for\ }V(\mu)\}.
$$

\begin{theorem}[\cite{FFL}]\label{global} 
The set  $\Es_{P}\subset \Lambda^+\times\mathbb N^N$ is a submonoid. In particular, for $(\mu,\underline{\mathbf m}),(\nu,\underline{\mathbf k})\in \Es_{P}$,
one has $\underline{\mathbf m}+\underline{\mathbf k} \in \es_{P}(\mu+\nu)$.
\end{theorem}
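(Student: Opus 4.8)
The plan is to prove both assertions at once, since the second ("for $(\mu,\underline{\mathbf m}),(\nu,\underline{\mathbf k})\in \Es_{P}$ one has $\underline{\mathbf m}+\underline{\mathbf k}\in\es_P(\mu+\nu)$") is exactly the statement that $\Es_P$ is closed under addition; closure under $0$ and the semigroup axioms are inherited from $\mathbb N\times\mathbb N^N$. So the whole content is: if $F^{\underline{\mathbf m}}v_\mu$ spans the subquotient $V(\mu)_{\le\underline{\mathbf m}}/V(\mu)_{<\underline{\mathbf m}}$ and $F^{\underline{\mathbf k}}v_\nu$ spans $V(\nu)_{\le\underline{\mathbf k}}/V(\nu)_{<\underline{\mathbf k}}$, then $F^{\underline{\mathbf m}+\underline{\mathbf k}}v_{\mu+\nu}$ spans $V(\mu+\nu)_{\le\underline{\mathbf m}+\underline{\mathbf k}}/V(\mu+\nu)_{<\underline{\mathbf m}+\underline{\mathbf k}}$. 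The first thing I would do is reduce to the case already handled in \cite{FFL}, Proposition~1, which is the statement $\{\ell\}\times\es_P(\ell\lambda)+\{k\}\times\es_P(k\lambda)\subset\{\ell+k\}\times\es_P((\ell+k)\lambda)$ cited just above Theorem~\ref{gammagleichess}; the point is that \emph{nothing in that argument uses that $\mu$ and $\nu$ are multiples of a single weight} — it only uses that $V(\mu),V(\nu)$ and $V(\mu+\nu)$ are all cyclic $U(\mathfrak n^-_P)$-modules generated by their highest weight vectors, which holds for every dominant $\mu,\nu$ with $\supp(\mu),\supp(\nu)\subseteq\supp(\lambda)$ (this is precisely why $P$, not $\lambda$, is what matters: $\mathfrak n^-_P$ acts on $v_\mu$ with a nonzero $\mathfrak{sl}_2$-triple for each $\beta\in\Phi^+_P$).

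The core step is the Cartan-multiplication argument. Fix $(\mu,\underline{\mathbf m})$ and $(\nu,\underline{\mathbf k})$ in $\Es_P$. Use the standard embedding $V(\mu+\nu)\hookrightarrow V(\mu)\otimes V(\nu)$ sending $v_{\mu+\nu}\mapsto v_\mu\otimes v_\nu$ and the comultiplication $\Delta\colon U(\mathfrak n^-_P)\to U(\mathfrak n^-_P)\otimes U(\mathfrak n^-_P)$, with $\Delta(F_\beta)=F_\beta\otimes 1+1\otimes F_\beta$. Then $F^{\underline{\mathbf m}+\underline{\mathbf k}}v_{\mu+\nu}$ maps, inside $V(\mu)\otimes V(\nu)$, to a sum of terms $F^{\underline{\mathbf a}}v_\mu\otimes F^{\underline{\mathbf b}}v_\nu$ over $\underline{\mathbf a}+\underline{\mathbf b}=\underline{\mathbf m}+\underline{\mathbf k}$ (after reordering, using that $[F_\beta\otimes 1, 1\otimes F_{\beta'}]=0$ so no correction terms appear for the cross terms, and absorbing the within-factor reordering into lower-order PBW terms). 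I would filter $V(\mu)\otimes V(\nu)$ by the tensor-product filtration induced from the $\le_{\mathrm{or}}$-filtrations on the two factors, and show: (i) the associated graded is spanned by classes of $F^{\underline{\mathbf a}}v_\mu\otimes F^{\underline{\mathbf b}}v_\nu$, and (ii) under the monomial order $>_{\mathrm{or}}$ the \emph{leading} term among all splittings $\underline{\mathbf a}+\underline{\mathbf b}=\underline{\mathbf m}+\underline{\mathbf k}$ with $\underline{\mathbf a}\in\es_P(\mu)$ and $\underline{\mathbf b}\in\es_P(\nu)$ is precisely $\underline{\mathbf a}=\underline{\mathbf m}$, $\underline{\mathbf b}=\underline{\mathbf k}$, and it occurs with multiplicity one. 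This gives $F^{\underline{\mathbf m}}v_\mu\otimes F^{\underline{\mathbf k}}v_\nu\neq 0$ in the subquotient, hence $F^{\underline{\mathbf m}+\underline{\mathbf k}}v_{\mu+\nu}\neq 0$, which forces $\underline{\mathbf m}+\underline{\mathbf k}\in\es_P(\mu+\nu)$ (the subquotient is at most one-dimensional, so nonvanishing is all that is needed). Conversely, $\underline{\mathbf m}+\underline{\mathbf k}$ is the $\nu$-value (in the notation of Section~\ref{essmon}) of this section, so no strictly larger tuple can appear — this gives both directions of the equality of filtration degrees.

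The step I expect to be the main obstacle is (ii): controlling the interaction between the PBW reordering and the monomial order $>_{\mathrm{or}}$ so that the "diagonal" splitting $(\underline{\mathbf m},\underline{\mathbf k})$ genuinely dominates every other splitting into essential pieces, and ruling out cancellation. The key lemma I would need is that if $\underline{\mathbf a}\in\es_P(\mu)$, $\underline{\mathbf b}\in\es_P(\nu)$, $\underline{\mathbf a}+\underline{\mathbf b}=\underline{\mathbf m}+\underline{\mathbf k}$ and $(\underline{\mathbf a},\underline{\mathbf b})\neq(\underline{\mathbf m},\underline{\mathbf k})$, then $\min(\underline{\mathbf a},\underline{\mathbf b})<_{\mathrm{or}}\underline{\mathbf m}$ in a suitable sense — this is essentially a convexity/leading-term statement about the monoid structure and is exactly where one re-examines the proof of \cite[Proposition~1]{FFL} to confirm it never used $\mu=\ell\lambda$, $\nu=k\lambda$. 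Once that is in hand, the rest is bookkeeping: linear independence of leading PBW monomials, the at-most-one-dimensionality of the subquotients, and the observation that $\supp(\mu+\nu)=\supp(\mu)\cup\supp(\nu)\subseteq\supp(\lambda)$ so that $(\mu+\nu,\underline{\mathbf m}+\underline{\mathbf k})$ indeed lies in the set $\Es_P$ as defined.
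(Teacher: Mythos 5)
The paper itself gives no proof of this theorem --- it is quoted from \cite{FFL} --- and your overall architecture (the Cartan embedding $V(\mu+\nu)\hookrightarrow V(\mu)\otimes V(\nu)$ with $v_{\mu+\nu}\mapsto v_\mu\otimes v_\nu$, the comultiplication, a filtration on the tensor product compatible with $\le_{\mathrm{or}}$, plus the observation that the argument only uses that $V(\mu),V(\nu),V(\mu+\nu)$ are cyclic $U(\mathfrak n^-_P)$-modules, so only $\supp$ matters and not proportionality of the weights) is indeed the route taken there. The genuine gap is in your pivotal step (ii), which is moreover formulated as the wrong statement: all splittings $\underline{\mathbf a}+\underline{\mathbf b}=\underline{\mathbf m}+\underline{\mathbf k}$ with $\underline{\mathbf a}\in\es_P(\mu)$, $\underline{\mathbf b}\in\es_P(\nu)$ have the \emph{same} sum, hence sit in the same top piece of any filtration graded by $\underline{\mathbf a}+\underline{\mathbf b}$, so there is no sense in which the diagonal pair $(\underline{\mathbf m},\underline{\mathbf k})$ is ``the leading term with multiplicity one''. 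Your proposed key lemma (that $\min(\underline{\mathbf a},\underline{\mathbf b})<_{\mathrm{or}}\underline{\mathbf m}$ ``in a suitable sense'') only records that in any other splitting one coordinate drops while the other rises; it gives no domination and cannot by itself rule out cancellation, and deferring the point to ``re-examining \cite[Proposition 1]{FFL}'' amounts to citing the result you set out to prove.

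The step can be repaired without any leading-term analysis. Use the standard fact from \cite{FFL} that the essential vectors form bases, $\{F^{\underline{\mathbf a}}v_\mu;\ \underline{\mathbf a}\in\es_P(\mu)\}$ of $V(\mu)$ and likewise for $V(\nu)$; hence the vectors $F^{\underline{\mathbf a}}v_\mu\otimes F^{\underline{\mathbf b}}v_\nu$, taken over essential pairs, are linearly independent. Expanding $\Delta(F^{\underline{\mathbf m}+\underline{\mathbf k}})(v_\mu\otimes v_\nu)$ gives exactly the sum over all splittings $\underline{\mathbf a}+\underline{\mathbf b}=\underline{\mathbf m}+\underline{\mathbf k}$ of $\bigl(\prod_i\binom{m_i+k_i}{a_i}\bigr)\,F^{\underline{\mathbf a}}v_\mu\otimes F^{\underline{\mathbf b}}v_\nu$, with both factors already in PBW order (no correction terms arise, since the two tensor legs commute and the product over the $\beta_i$ is taken in the fixed order). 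Rewrite each factor in its essential basis; since $\le_{\mathrm{or}}$ is translation-invariant, a pair $(\underline{\mathbf a}',\underline{\mathbf b}')$ with $\underline{\mathbf a}'\le_{\mathrm{or}}\underline{\mathbf a}$, $\underline{\mathbf b}'\le_{\mathrm{or}}\underline{\mathbf b}$ and $\underline{\mathbf a}'+\underline{\mathbf b}'=\underline{\mathbf m}+\underline{\mathbf k}$ forces $\underline{\mathbf a}'=\underline{\mathbf a}$, $\underline{\mathbf b}'=\underline{\mathbf b}$, so the component in filtration degree exactly $\underline{\mathbf m}+\underline{\mathbf k}$ is the sum over those splittings in which \emph{both} parts are essential. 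This component is nonzero: the summands are distinct members of a linearly independent family, the binomial coefficients are positive, and at least one summand exists, namely $(\underline{\mathbf m},\underline{\mathbf k})$ --- no uniqueness or domination is needed. Since every $F^{\underline{\mathbf t}}(v_\mu\otimes v_\nu)$ with $\underline{\mathbf t}<_{\mathrm{or}}\underline{\mathbf m}+\underline{\mathbf k}$ lands in strictly lower filtration, it follows that $F^{\underline{\mathbf m}+\underline{\mathbf k}}v_{\mu+\nu}\notin V(\mu+\nu)_{<_{\mathrm{or}}\underline{\mathbf m}+\underline{\mathbf k}}$, i.e. $\underline{\mathbf m}+\underline{\mathbf k}\in\es_P(\mu+\nu)$; combined with your (correct) remarks on supports and the trivial monoid axioms, this closes the argument.
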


Consider the following class of examples. Let $\{\alpha_1,\ldots,\alpha_n\}$ be the set of simple roots for $\mathfrak g$,
$L\supseteq T$ be the Levi subgroup of $P$ and $w_L$ be the longest word in the Weyl group of $L$. 
Fix a reduced decomposition of the longest word $w_0$ in the Weyl group $W$ of $G$, which is of the form $w_0=w_Ls_{i_1}\cdots s_{i_N}$ or $w_0=s_{i_1}\cdots s_{i_N}w_L$:
in the first case we set
\begin{equation}\label{decomplist}
\Phi^+_P=\{\beta_1=w_L(\alpha_{i_1}),\beta_2= w_Ls_{i_1}(\alpha_{i_2}), \ldots,\beta_N=w_Ls_{i_1}\cdots s_{i_{N-1}}(\alpha_{i_N})\};
\end{equation}
while in the second case we set
\begin{equation}\label{decomplistb}
\Phi^+_P=\{\beta_N=w_L(\alpha_{i_N}),\beta_{N-1}=w_Ls_{i_N}(\alpha_{i_{N-1}}),\ldots, \beta_1=w_L s_{i_{N}}\cdots s_2(\alpha_{i_1})\}.
\end{equation}
An enumeration as in  \eqref{decomplist}  or  \eqref{decomplistb} is said to be \emph{induced by a reduced decomposition}.
\begin{theorem}\cite[Corollary 6]{FFL} 
If the enumeration of the positive roots is induced by a reduced decomposition,
then $\Gamma_{\lambda,\,\underline{\beta}}$ is finitely generated and saturated.
In particular, the limit toric variety $X_0$ (see Theorem~\ref{summary}) is normal.
\end{theorem}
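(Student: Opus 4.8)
The plan is to transport the statement into convex geometry via the representation-theoretic model. By Theorem~\ref{gammagleichess}, $\Gamma_{\lambda,\,\underline{\beta}}=\Es_P(\lambda)=\bigcup_{\ell\in\mathbb N}\{\ell\}\times\es_P(\ell\lambda)$, and by Theorem~\ref{global} this is the slice over the ray $\mathbb N\lambda$ of the global monoid $\Es_P\subset\Lambda^+\times\mathbb N^N$. Hence it suffices to prove that $\Es_P$ is the set of \emph{all} lattice points of a \emph{rational polyhedral} cone $\mathcal C_{\underline{i}}\subset\Lambda_{\mathbb R}\times\mathbb R^N$. Granting this, Gordan's lemma shows $\mathcal C_{\underline{i}}\cap(\Lambda\times\mathbb Z^N)$ is finitely generated; it is saturated because it is the full lattice-point set of a cone; and intersecting with the rational half-space $\mathbb R_{\ge 0}\lambda\times\mathbb R^N$ (again a rational polyhedral cone) preserves both properties. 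Since that intersection is exactly $\Gamma_{\lambda,\,\underline{\beta}}$, this would yield the theorem for every such $\lambda$ at once.

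The core step is the polyhedral description of $\Es_P$, and here I would use that $\underline{\beta}$ is induced by a reduced decomposition. Write $w_0=w_L s_{i_1}\cdots s_{i_N}$ with $w_L$ the longest element of the Weyl group $W_L$ of $L$; equivalently $w_0=w_Lw^P$ with $w^P$ the minimal-length representative of the longest coset in $W/W_L$, so that \eqref{decomplist} expresses $\Phi^+_P$ through the reduced word $\underline{i}=(i_1,\dots,i_N)$. After translating by $w_L$, the ordered monomials $F^{\underline{\mathbf m}}=F_{\beta_1}^{m_1}\cdots F_{\beta_N}^{m_N}$ are the Lusztig PBW monomials attached to $\underline{i}$, and the right-opposite-lex order is arranged so that the associated graded of the induced filtration on $V(\mu)$ picks out a basis whose exponents are read off from the crystal $B(\mu)$ in the $\underline{i}$-string parametrization. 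Thus $\es_P(\mu)$ should be identified with the integral points of the string polytope $\mathcal S_{\underline{i}}(\mu)$, and $\Es_P$ with the homogeneous string monoid $\{(\mu,\underline{\mathbf m});\ \underline{\mathbf m}\in\mathcal S_{\underline{i}}(\mu)\cap\mathbb Z^N,\ \supp(\mu)\subseteq\supp(\lambda)\}$, up to the bookkeeping encoded in $w_0=w_Lw^P$. The key classical input --- Littelmann via the path model, Berenstein--Zelevinsky in crystal language --- is that this set is cut out from $\Lambda^+\times\mathbb N^N$ by finitely many linear inequalities depending only on $\underline{i}$ (the string cone inequalities). This exhibits $\Es_P=\mathcal C_{\underline{i}}\cap(\Lambda\times\mathbb Z^N)$ and closes the combinatorial argument.

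Finally, normality of $X_0$ is formal once finite generation and saturation are known: by Theorem~\ref{summary}, $X_0\cong\mathrm{Proj}\,\mathbb C[\Gamma_{\lambda,\,\underline{\beta}}]$, and a saturated affine semigroup has an integrally closed semigroup algebra, so this $\mathrm{Proj}$ is normal; equivalently $X_0$ already agrees with its normalization $X_{\Delta_\lambda(\underline{\beta})}$.

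The step I expect to be the main obstacle is the polyhedral description in the second paragraph: proving, \emph{uniformly in} $\mu$, that the essential exponent vectors are governed by a single finite explicit list of linear inequalities. This needs the full machinery of the Littelmann path model (or Kashiwara crystals with their string parametrizations) together with the compatibility of the PBW filtration with a reduced word, and careful bookkeeping to descend from $G/B$ to the partial flag variety $G/P$ via $w_0=w_Lw^P$ and the enumerations \eqref{decomplist}--\eqref{decomplistb}. Everything downstream of that is Gordan's lemma plus the standard toric dictionary.
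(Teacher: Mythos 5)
This statement is quoted directly from \cite[Corollary 6]{FFL}; the paper contains no internal proof of it, so there is nothing in the text itself to compare your argument against line by line. That said, your sketch follows essentially the route of the cited source: identify $\Gamma_{\lambda,\,\underline{\beta}}$ with $\Es_{P}(\lambda)$ via Theorem~\ref{gammagleichess}, realize the global monoid $\Es_{P}$ of Theorem~\ref{global} as the full set of lattice points of a rational polyhedral cone attached to the reduced word, deduce finite generation and saturation from Gordan's lemma and from slicing over the ray $\mathbb N\lambda$, and obtain normality of $X_0$ formally from saturation via Theorem~\ref{summary}. Two caveats. First, the step you yourself flag as the main obstacle is precisely the content of the cited result and is not quite as you describe it: for an enumeration induced by a reduced decomposition, the essential exponents are most directly the Lusztig data coming from the triangular expansion of the canonical basis in the PBW basis; the relation to string parametrizations is then obtained through Morier-Genoud \cite{M} (this is exactly what Remark~\ref{ExplicitNO} alludes to), and the uniform-in-$\mu$ polyhedrality is supplied by Littelmann \cite{Li} and Berenstein--Zelevinsky \cite{BZ} in either parametrization. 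So invoking ``string cone inequalities'' is the right kind of input, but the identification of essential data with string data does not follow from the right-opposite-lex order alone and needs this canonical-basis (or equivalent crystal) argument, which your proposal defers to the literature rather than proves. Second, a terminological slip: $\mathbb R_{\ge 0}\lambda\times\mathbb R^N$ is a ray times $\mathbb R^N$, not a half-space; the slicing argument is nonetheless correct, since intersecting rational polyhedral cones preserves the properties you need and the slice over $\mathbb N\lambda$ is exactly $\Gamma_{\lambda,\,\underline{\beta}}$ under $(\ell\lambda,\underline{\mathbf m})\mapsto(\ell,\underline{\mathbf m})$. With those provisos, your outline is a faithful reconstruction of the proof strategy behind \cite[Corollary 6]{FFL}.
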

\begin{remark}\label{ExplicitNO} 
Even if $\Gamma_{\lambda,\,\underline{\beta}}$ is finitely generated, it  is not an easy task
to give an explicit description of the associated Newton-Okounkov body.  
If the enumeration of the positive roots is induced by a reduced decomposition, using \cite{M}, one can show that these
bodies are related to string polytopes. The string polytopes are described in \cite{BZ,Li},
and further examples are given in \cite{AB} and \cite{FFL}.  
Comments regarding geometric properties of the limit toric variety $X_0$ (see Theorem~\ref{summary}) like Fano and Gorenstein can be found in  \cite{AB}.
\end{remark}
The identification of $\Gamma_\lambda$ with $\Es_{P}(\lambda)$ can be used to construct simplices contained in $\Delta_\lambda$. In the next two sections, we describe three different methods of doing so. 

\section{A special simplex in $\Delta_\lambda$ from good orderings}\label{good orderings}
Recall the usual partial order on positive roots:
$\beta\succ \gamma$ if $\beta-\gamma$ can be written as a sum of positive roots.
In this section, we assume that the enumeration $\underline{\beta}=\{\beta_1,\ldots,\beta_N\}$ of  the roots in $\Phi^+_P $
is a {\it good ordering} in the sense of \cite{FFL}, \emph{i.e.,} if $\beta_i\succ \beta_j$, then $i>j$. Again, as $\underline{\beta} $ is fixed throughout the 
section, we suppress it from the notation. 
For $1\leq k\leq N$, we denote by $\mathbf{e}_k$ the standard coordinate of $\mathbb{R}^N$ whose $k$-th entry is $1$ and other entries are $0$.

\begin{lemma}\label{hilfslemma1}
If $\varpi$  is a fundamental weight contained in $\supp(\lambda)$ and $\beta\in \Phi^+_P$ is such that
$\langle\varpi,\beta^\vee\rangle\not=0$, then the root vector $F_\beta$ is essential for $V(\varpi)$.
\end{lemma}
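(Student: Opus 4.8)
The plan is to show that $F_\beta v_\varpi$ spans the one-dimensional subquotient $V(\varpi)_{\le_{\mathrm{or}} \mathbf{e}_k}/V(\varpi)_{<_{\mathrm{or}} \mathbf{e}_k}$, where $k$ is the index with $\beta = \beta_k$. Since $\mathbf{e}_k$ is the tuple whose only nonzero entry is a $1$ in position $k$, the PBW monomials $F^{\underline{\mathbf m}}$ with $\underline{\mathbf m} \le_{\mathrm{or}} \mathbf{e}_k$ and $\underline{\mathbf m} \ne \mathbf 0$ are exactly $F_{\beta_1}^{m_1}\cdots F_{\beta_k}^{m_k}$ with only one factor of total degree one — i.e. the set $\{F_{\beta_1}, \ldots, F_{\beta_k}\}$ together with $\mathbf 0 \mapsto 1$ (this uses the definition of $>_{\mathrm{or}}$ as the reverse of right-lex). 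So the essentiality of $\mathbf e_k$ for $V(\varpi)$ amounts to showing $F_{\beta_k} v_\varpi \notin \sspan_{\mathbb C}\{v_\varpi, F_{\beta_1}v_\varpi, \ldots, F_{\beta_{k-1}}v_\varpi\}$.

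First I would reduce to a weight-space computation. The vector $F_{\beta} v_\varpi$ lies in the weight space $V(\varpi)_{\varpi - \beta}$, and each $F_{\beta_j}v_\varpi$ for $j < k$ lies in $V(\varpi)_{\varpi - \beta_j}$. The span on the right-hand side decomposes over weights, so if no $\beta_j$ with $j<k$ equals $\beta_k$, it suffices to check that $F_{\beta_k} v_\varpi \ne 0$ in $V(\varpi)$. The enumeration is injective (the $\beta_i$ are distinct roots), so indeed $\beta_j \ne \beta_k$ for $j \ne k$, and the reduction is immediate; no use of the good-ordering hypothesis is even needed for this part, only the injectivity of $\underline\beta$. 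Then nonvanishing of $F_\beta v_\varpi$ follows from $\mathfrak{sl}_2$-theory: the triple $(E_\beta, F_\beta, \beta^\vee)$ generates a copy of $\mathfrak{sl}_2$, and $v_\varpi$ is a highest weight vector for it with $\beta^\vee$-weight $\langle \varpi, \beta^\vee\rangle$, which is a nonnegative integer (as $\varpi$ is dominant and $\beta^\vee$ a positive coroot) and is nonzero by hypothesis. Hence $F_\beta v_\varpi$ generates a nontrivial $\mathfrak{sl}_2$-submodule and in particular $F_\beta v_\varpi \ne 0$.

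The one remaining point to verify carefully is that $F_\beta v_\varpi$ actually lies in $V(\varpi)_{\le_{\mathrm{or}} \mathbf e_k}$, i.e. that $\mathbf e_k \le_{\mathrm{or}} \mathbf e_k$ — trivially true — and that the subquotient is at most one-dimensional, which is recorded in the text right after \eqref{filt1}. Putting these together: the subquotient is spanned by the image of $F_{\beta_k}v_\varpi$, which is nonzero because $F_{\beta_k}v_\varpi \ne 0$ and $F_{\beta_k}v_\varpi \notin V(\varpi)_{<_{\mathrm{or}}\mathbf e_k}$ (the latter containing, among the weight-$(\varpi-\beta_k)$ vectors, only $0$ since no earlier PBW monomial has that weight). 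Therefore $\dim V(\varpi)_{\le_{\mathrm{or}}\mathbf e_k}/V(\varpi)_{<_{\mathrm{or}}\mathbf e_k} = 1$, so $\mathbf e_k$ is essential and $F_\beta$ is essential for $V(\varpi)$.

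I do not anticipate a serious obstacle here; the only mild subtlety is bookkeeping with the two reversed orders ($>_{\mathrm r}$ versus $>_{\mathrm{or}}$) to confirm that the PBW monomials $\le_{\mathrm{or}} \mathbf e_k$ are precisely $\{1, F_{\beta_1}, \ldots, F_{\beta_k}\}$, and a sign/normalization check that $v_\varpi$ really is $E_\beta$-highest (which holds because $\beta$ is a positive root, so $E_\beta \in \mathfrak n^+$ kills the highest weight vector). Both are routine.
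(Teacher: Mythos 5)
Your $\mathfrak{sl}_2$ argument that $\langle\varpi,\beta^\vee\rangle\neq 0$ forces $F_\beta v_\varpi\neq 0$ is correct and coincides with the paper's, but the combinatorial half of your proof rests on a mis-identification of the filtration, and that is exactly where the content of the lemma lies. The tuples $\underline{\mathbf k}\le_{\mathrm{or}}\mathbf{e}_k$ are \emph{not} $\{\mathbf 0,\mathbf{e}_1,\ldots,\mathbf{e}_k\}$: since $\underline{\mathbf k}<_{\mathrm{or}}\mathbf{e}_k$ means $\underline{\mathbf k}>_{\mathrm{r}}\mathbf{e}_k$, this set is infinite and contains, for instance, every tuple whose last nonzero entry sits in a position $>k$, every $m\mathbf{e}_k$ with $m\ge 2$, and every tuple equal to $\mathbf{e}_k$ plus something supported before position $k$; on the other hand $\mathbf 0,\mathbf{e}_1,\ldots,\mathbf{e}_{k-1}$ are all $>_{\mathrm{or}}\mathbf{e}_k$ (indeed $\mathbf 0$ is the maximum for $>_{\mathrm{or}}$). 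Hence $V(\varpi)_{<_{\mathrm{or}}\mathbf{e}_k}$ is spanned by many monomial vectors of degree $\ge 2$, and your weight argument has to exclude those of weight $\varpi-\beta$ among \emph{them}; this does not follow from injectivity of the enumeration, and your assertion that the good-ordering hypothesis is not needed is false. Concretely, in type ${\tt A}_2$ with $\varpi=\varpi_1$ and the (bad) enumeration $\beta_1=\alpha_1+\alpha_2$, $\beta_2=\alpha_2$, $\beta_3=\alpha_1$, one has $(0,1,1)>_{\mathrm{r}}(1,0,0)$, i.e. $(0,1,1)<_{\mathrm{or}}\mathbf{e}_1$, and $F_{\alpha_2}F_{\alpha_1}v_{\varpi_1}$ is a nonzero multiple of $F_{\alpha_1+\alpha_2}v_{\varpi_1}$ because the relevant weight space is one-dimensional; so $F_{\beta_1}v_{\varpi_1}\in V(\varpi_1)_{<_{\mathrm{or}}\mathbf{e}_1}$ and $\mathbf{e}_1$ is \emph{not} essential, even though $\langle\varpi_1,\beta_1^\vee\rangle=1\neq 0$.

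The missing step, which is the actual point of the paper's proof, is the use of the good ordering: if $F^{\underline{\mathbf k}}$ is any PBW monomial of weight $\beta=\beta_{i_0}$ other than $F_{\beta_{i_0}}$ itself, then every $\beta_j$ with $k_j\neq 0$ satisfies $\beta\succ\beta_j$, hence $j<i_0$ by the good-ordering assumption, hence $\underline{\mathbf k}$ is supported strictly before position $i_0$ and therefore $\underline{\mathbf k}<_{\mathrm{r}}\mathbf{e}_{i_0}$, i.e. $\underline{\mathbf k}>_{\mathrm{or}}\mathbf{e}_{i_0}$. Consequently no spanning vector of $V(\varpi)_{<_{\mathrm{or}}\mathbf{e}_{i_0}}$ has weight $\varpi-\beta$, and combined with $F_\beta v_\varpi\neq 0$ this yields essentiality. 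Your write-up skips exactly this step (and also gets the direction of $>_{\mathrm{or}}$ on the degree-one tuples backwards: among those, the ones below $\mathbf{e}_k$ are $\mathbf{e}_{k+1},\ldots,\mathbf{e}_N$, not $\mathbf{e}_1,\ldots,\mathbf{e}_{k-1}$), so as it stands it has a genuine gap.
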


\begin{proof}
Let $i_0$ be such that $\beta=\beta_{i_0}$ in the enumeration of the roots in $\Phi^+_P$.
Let $F^{\underline{\mathbf k}}$ be an element of the PBW-basis of weight $\beta$. Then either $\underline{\mathbf k}=\mathbf{e}_{i_0}$,
or one has $\beta =\sum_{j=1}^N k_j\beta_j$ with at least two nonzero coefficients. It follows that if $k_j\not=0$, then $\beta\succ\beta_j$,
and hence $\underline{\mathbf k}>_{\mathrm{or}} \mathbf{e}_{i_0}$ in the opposite right lexicographic ordering. In addition,
$\langle\varpi,\beta^\vee\rangle\not=0$ implies $F_\beta v_\varpi\not=0$, which proves that 
$F_\beta v_\varpi$ and hence $F_\beta$ is essential for $V(\varpi)$.
\end{proof}

Recall the definition of $\mathfrak S^N(a)$  in \eqref{simplex}. 
Let $\rho_{P}$ be  the sum of all fundamental weights in $\supp(\lambda)$.

\begin{theorem}\label{Prop:goodordering}
For $k=\min\{\vert \langle\lambda,\alpha^\vee\rangle\vert ; \alpha^{\vee} \textrm{ a  coroot and }\left\langle \lambda,\alpha^{\vee} \right\rangle \neq0\}$,   
one has $\mathfrak S^N(k)\subset \Delta_\lambda$.
\end{theorem}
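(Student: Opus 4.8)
The plan is to prove the slightly stronger statement that the closed simplex $\mathrm{Conv}\{0,k\mathbf e_1,\dots,k\mathbf e_N\}$ is contained in $\Delta_\lambda$. Since $\Delta_\lambda$ is closed and convex, it then suffices to check that each of its $N+1$ vertices lies in $\Delta_\lambda$, and because $\mathfrak S^N(k)\subset\mathrm{Conv}\{0,k\mathbf e_1,\dots,k\mathbf e_N\}$ this gives the theorem. Using Theorem~\ref{gammagleichess} I identify $\Delta_\lambda$ with the closure of the convex hull of $\{\tfrac1\ell\underline{\mathbf m};\ (\ell,\underline{\mathbf m})\in\Es_P(\lambda)\}$; the vertex $0$ is there because the tuple $\underline{\mathbf m}=0$ is trivially essential for $V(\lambda)$. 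So the whole problem reduces to showing that $k\,\mathbf e_i\in\es_{P}(\lambda)$ for each $i=1,\dots,N$.

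The first step is to get $\mathbf e_i$ essential at level one, for the weight $\rho_P$. Since $\beta_i\in\Phi^+_P$ the root $\beta_i$ is not a root of the Levi subgroup of $P$, so there is a fundamental weight $\varpi\in\supp(\lambda)$ with $\langle\varpi,\beta_i^\vee\rangle\neq0$ (equivalently $\langle\rho_P,\beta_i^\vee\rangle\neq0$). By Lemma~\ref{hilfslemma1} this gives $\mathbf e_i\in\es_{P}(\varpi)$, i.e. $(\varpi,\mathbf e_i)\in\Es_{P}$. Adding the points $(\varpi',0)\in\Es_{P}$ for the remaining fundamental weights $\varpi'\in\supp(\lambda)$ and using that $\Es_{P}$ is a monoid (Theorem~\ref{global}), I obtain $(\rho_P,\mathbf e_i)\in\Es_{P}$, that is, $\mathbf e_i\in\es_{P}(\rho_P)$.

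The second step is to inflate by the factor $k$ and transport the result from $\rho_P$ to $\lambda$, staying inside the monoid $\Es_{P}$. Adding $(\rho_P,\mathbf e_i)$ to itself $k$ times gives $(k\rho_P,k\mathbf e_i)\in\Es_{P}$. Write $\lambda=\sum_{\varpi\in\supp(\lambda)}a_\varpi\varpi$ with all $a_\varpi\ge1$. For $\varpi=\varpi_s$ one has $\langle\lambda,\alpha_s^\vee\rangle=a_s\neq0$, hence $k\le a_s$ by the definition of $k$; therefore $k\le\min_\varpi a_\varpi$, and $\lambda-k\rho_P=\sum_\varpi(a_\varpi-k)\varpi$ is still a dominant weight with support contained in $\supp(\lambda)$. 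Thus $(\lambda-k\rho_P,0)\in\Es_{P}$, and adding it to $(k\rho_P,k\mathbf e_i)$ yields $(\lambda,k\mathbf e_i)\in\Es_{P}$, i.e. $k\,\mathbf e_i\in\es_{P}(\lambda)$. This produces all $N+1$ vertices and finishes the proof.

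I expect the delicate point to be the first step: obtaining $\mathbf e_i$ essential simultaneously for all $i$ is exactly where the good-ordering hypothesis is genuinely used, through Lemma~\ref{hilfslemma1} — it is the good ordering that guarantees that no PBW monomial of weight $\beta_i$ precedes $F_{\beta_i}$ in the chosen order, so that $F_{\beta_i}v_\varpi\ne0$ cannot be absorbed into the strictly smaller part of the filtration. A pitfall to avoid is trying to push the $i$-th coordinate all the way up to $\langle\lambda,\beta_i^\vee\rangle$: in general $\langle\lambda,\beta_i^\vee\rangle\,\mathbf e_i$ is \emph{not} essential, since higher powers of $F_{\beta_i}$ can collide with other PBW monomials of the same weight that are smaller in the order. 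Stopping at $k$ is what makes everything go through, and it costs nothing precisely because $k$ does not exceed the smallest coefficient of $\lambda$.
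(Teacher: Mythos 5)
Your proposal is correct and follows essentially the same route as the paper: use Lemma~\ref{hilfslemma1} to get each $\mathbf e_i$ essential for some fundamental weight in $\supp(\lambda)$, then the monoid property of $\Es_P$ (Theorem~\ref{global}) together with the decomposition $\lambda=k\rho_P+\nu$ to conclude $k\mathbf e_i\in\es_P(\lambda)$, and finish by convexity of $\Delta_\lambda$. Your verification that $k\le\min_\varpi a_\varpi$ just makes explicit what the paper states as ``$k$ is the maximal integer with $\lambda=k\rho_P+\nu$ dominant,'' so there is no substantive difference.
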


\begin{proof}
It is easy to see that $k=\min\{\langle\lambda,\beta^\vee\rangle; \beta\in \Phi_P^+\}$, and it
is the maximal integer such that $\lambda=k\rho_{P}+\nu$, where $\nu$ is a dominant integral weight with $\supp(\nu)\subset \supp(\lambda)$.
\par
Let $\beta_i\in \Phi^+_P$. Since $\langle\lambda,\beta_i^\vee\rangle\not=0$, one has $\langle\varpi,\beta_i^\vee\rangle\not=0$
for some fundamental weight $\varpi\in \supp(\lambda)$. By Lemma \ref{hilfslemma1}, $\mathbf{e}_i$ is essential for $V(\varpi)$ for some $\varpi\in \supp(\lambda)$. Since the zero vector is always among the essential tuples for any dominant weight, Theorem~\ref{global} implies that $\mathbf{e}_1,\ldots,\mathbf{e}_N$ are essential for $V(\rho_P)$. The same reasoning implies that  $k\mathbf{e}_1,\ldots,k\mathbf{e}_N$ are essential for $V(k\rho_{P})$, and, again for the same reason, they are essential for $V(\lambda)$. By the convexity of $\Delta_\lambda$, one has $\mathfrak S^N(k)\subset \Delta_\lambda$.
\end{proof}

\begin{remark}\label{remarkpeter}
If $G$ is of type $\tt A_n,C_n, G_2$ or $\tt D_4$, then it
is known, that there exists a good ordering such that the monoid $\Gamma_\lambda=\Es_P(\lambda)$ is 
finitely generated for any dominant integral weight $\lambda$ \cite{FFL1,G,G2}. We conjecture that this holds for any simply connected simple 
complex algebraic group $G$.
\end{remark}
\begin{remark}\label{remarkpeter2}
Let $G$ be of one of the types above. It has been shown that in these cases, there exists a good ordering such that for any dominant weight
the monoid $\Gamma_\lambda=\Es_P(\lambda)$ is finitely generated, saturated and has the Minkowski property \cite{FFL1,G,G2}, i.e., 
$$\{k\}\times\es_{P}(k\lambda)+\{\ell\}\times\es_{P}(\ell\lambda)=\{k+\ell\}\times\es_{P}((k+\ell)\lambda).$$
In particular,
the Newton-Okounkov body $\Delta_\lambda$ is an integral polytope and the limit variety $X_0$ (Theorem~\ref{summary}) is 
a normal toric variety. In addition, if $\mathcal L_\lambda = \mathcal O(-K_{G/P})$ is the anticanonical line bundle, then, by  \cite[Theorem 3.8]{AB},
$X_0$ is a Fano variety and the Newton-Okounkov body is reflexive.
\end{remark}

\section{Additional special simplices in $\Delta_\lambda$}\label{mooreorderings}
Although the first method (Section~\ref{good orderings}) gives a proof of Theorem \ref{theorem lb gw} in full generality, 
we present two additional approaches for generic orbits. In these approaches we use enumerations
of the positive roots ${\underline{\beta}}$ induced by reduced decompositions of $w_0$, so the 
monoids $\Gamma_{\lambda,\,\underline{\beta}}$ are finitely generated and saturated.  It follows that the limit varieties $X_0$ 
(Theorem~\ref{summary}) exist and are normal toric varieties. Moreover, in many of these cases, an explicit
description of the Newton-Okounkov body is known (see \cite{BZ,L}, Remark~\ref{ExplicitNO}).
Therefore, we hope that these approaches could help to solve other related problems about coadjoint orbits
(for example, symplectic packing, formula for potential functions and finding non-displaceable Lagrangians \cite{NNU},  \emph{etc.}). 
Additionally, the degeneration discussed in Section~\ref{cominuscule telescopes} has the advantage that the simplex of appropriate size
is a ``corner" of the Newton-Okounkov body. Therefore, Corollary \ref{cor action on smooth pts} implies that there exists 
an embedding of a ball of capacity given by \eqref{gw formula}, i.e. the supremum appearing in the Gromov width definition is attained.
Moreover, the construction of cominuscule telescopes (Section~\ref{cominuscule telescopes}) may relate to  Thimm's trick \cite{T}.

We  assume in the following that $\lambda$ is a {\it  regular dominant integral weight}, \emph{i.e.}, for any simple root
$\alpha$: $\langle\lambda,\alpha^\vee\rangle>0$. In this case,
$\supp(\lambda)$ is the set of all fundamental weights, so $P=B$ and we omit this subscript to simplify the notation, and
\eqref{gw formula} is equal to $\min\{\langle\lambda,\beta^\vee\rangle;\beta\in \Phi^+\}$.
\subsection{A special simplex in $\Delta_\lambda$ from convex ordering}\label{convex ordering}
Let $w_0=s_{i_1}\cdots s_{i_N}$ be a reduced decomposition of the longest word and let $\underline{\beta}=\{\beta_1,\ldots,\beta_N\}$
be the induced enumeration of all positive roots as in \eqref{decomplistb} with $L=T$.
Given $\underline{m}=(m_1, \ldots, m_N)\in \mathbb{N}^N$, let $F^{\underline{m}}v_{\lambda}$ denote 
$F_{\beta_{1}}^{m_{1}}\cdots F_{\beta_N}^{m_N}v_\lambda$. If $F^{\underline{m}}v_{\lambda}\neq 0$, then it is a $T$-eigenvector of weight $\lambda -m_1\beta_1-\ldots - m_N \beta_N$.
\par
We define a tuple $\underline{\mathbf m}^{\max}=(m_1^{\max},\ldots,m_N^{\max})\in\mathbb N^N$ by descending induction: 
\begin{itemize}
\item $m_N^{\max}$ is the maximal integer with the property
 $F_{\beta_N}^{m_N^{\max}}v_\lambda\not=0$;
\item if $m^{\max}_\ell$ is defined for $\ell=k+1,\ldots, N$, then $m_k^{\max}$
 is defined as  the maximal integer with the property 
$F_{\beta_{k}}^{m_{k}^{\max}}F_{\beta_{k+1}}^{m_{k+1}^{\max}}\cdots F_{\beta_N}^{m_N^{\max}}v_\lambda\neq 0$.
\end{itemize}
The tuple $\underline{\mathbf m}^{\max}$ gives rise to a sequence of tuples in $\mathbb N^N$: for $k=1,\ldots, N$, set
$$
\underline{\mathbf m}_k^{\max}=(0,\ldots,0,m_{k}^{\max},m_{k+1}^{\max},\ldots,m_{N}^{\max}).
$$
\begin{lemma}\label{lemma23} 
For all $k=1,\ldots,N$, the following statements hold:
\begin{enumerate}
\item $m_{k}^{\max}=\langle \lambda,\alpha^\vee_{i_k}\rangle>0$ for all $k=1,\ldots,N$;
\item $F_{\beta_{k}}^{m_{k}^{\max}}\cdots F_{\beta_N}^{m_N^{\max}}v_\lambda$ is a weight
vector of weight $s_{\beta_k}\cdots s_{\beta_N}(\lambda)$;
\item  $\underline{\mathbf m}_k^{\max}$ is essential for $V(\lambda)$.
\end{enumerate}
\end{lemma}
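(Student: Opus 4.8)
The plan is to prove the three statements simultaneously by descending induction on $k$, from $k=N$ down to $k=1$, exploiting the fact that the enumeration comes from a reduced decomposition $w_0 = s_{i_1}\cdots s_{i_N}$, so that $\beta_k = s_{i_N}s_{i_{N-1}}\cdots s_{i_{k+1}}(\alpha_{i_k})$ (reading off \eqref{decomplistb} with $L=T$), and the partial products $s_{i_N}\cdots s_{i_{k}}$ are themselves reduced. The engine of the induction is the $\mathfrak{sl}_2$-theory attached to the simple root $\alpha_{i_k}$: if $v$ is a weight vector of weight $\xi$ that is a highest weight vector for the $\mathfrak{sl}_2$-triple $(E_{\alpha_{i_k}}, F_{\alpha_{i_k}}, \alpha_{i_k}^\vee)$ — equivalently $E_{\alpha_{i_k}} v = 0$ — then $F_{\alpha_{i_k}}^m v \neq 0$ exactly for $0 \le m \le \langle \xi, \alpha_{i_k}^\vee\rangle$, and $F_{\alpha_{i_k}}^{\langle \xi,\alpha_{i_k}^\vee\rangle} v$ has weight $s_{i_k}(\xi)$. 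The subtlety is that the relevant root vector is $F_{\beta_k}$, not $F_{\alpha_{i_k}}$; one handles this by conjugating by a representative $\dot w$ of $w = s_{i_N}\cdots s_{i_{k+1}}$, which sends the $\mathfrak{sl}_2$-triple for $\alpha_{i_k}$ to the one for $\beta_k = w(\alpha_{i_k})$, and sends $v_\lambda$-type highest-weight data along accordingly.

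Concretely, here is the order of steps. First, base case $k=N$: $\beta_N = \alpha_{i_N}$ is simple, $v_\lambda$ is a highest weight vector so $E_{\alpha_{i_N}} v_\lambda = 0$, hence by $\mathfrak{sl}_2$-theory $m_N^{\max} = \langle\lambda,\alpha_{i_N}^\vee\rangle$ (which is $>0$ by regularity of $\lambda$), giving (1); the top power produces a vector of weight $s_{i_N}(\lambda) = s_{\beta_N}(\lambda)$, giving (2); and (3) follows because $\underline{\mathbf m}_N^{\max} = m_N^{\max}\mathbf e_N$ sits at the very end of the enumeration, so no PBW monomial of the same weight is $>_{\mathrm{or}}$-larger — indeed any competing monomial would have to involve some $\beta_j$ with $j<N$, and one checks via the convexity/order properties of the reduced-word enumeration that such a monomial is $<_{\mathrm{or}}$, so the subquotient is one-dimensional. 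Second, the inductive step: assume (1)–(3) hold for $\ell = k+1,\ldots,N$. Set $v := F_{\beta_{k+1}}^{m_{k+1}^{\max}}\cdots F_{\beta_N}^{m_N^{\max}} v_\lambda$, which by (2) at level $k+1$ is a nonzero weight vector of weight $\xi_{k+1} := s_{\beta_{k+1}}\cdots s_{\beta_N}(\lambda)$. The key claim is that $v$ is annihilated by $E_{\beta_k}$; granting this, $\mathfrak{sl}_2$-theory for the $\beta_k$-triple gives $m_k^{\max} = \langle \xi_{k+1}, \beta_k^\vee\rangle$, and a standard reduced-word identity rewrites $\langle s_{\beta_{k+1}}\cdots s_{\beta_N}(\lambda), \beta_k^\vee\rangle = \langle \lambda, (s_{\beta_N}\cdots s_{\beta_{k+1}})^{-1}\beta_k^\vee\rangle = \langle\lambda,\alpha_{i_k}^\vee\rangle$, proving (1); then (2) follows since $F_{\beta_k}^{m_k^{\max}} v$ has weight $s_{\beta_k}(\xi_{k+1}) = s_{\beta_k}s_{\beta_{k+1}}\cdots s_{\beta_N}(\lambda)$; and (3) follows by the same "leading monomial in the $>_{\mathrm{or}}$ order" argument: any PBW monomial of weight $\lambda - m_k^{\max}\beta_k - \cdots - m_N^{\max}\beta_N$ other than $\underline{\mathbf m}_k^{\max}$ must, because of good-ordering/convexity of the reduced-word enumeration, be strictly $<_{\mathrm{or}}$ than $\underline{\mathbf m}_k^{\max}$ (its support would force a factor $\beta_j$ with $j < k$, which is $>_{\mathrm{or}}$-later hence $<_{\mathrm{or}}$-earlier in a way that lowers the monomial), so again the subquotient $V(\lambda)_{\le_{\mathrm{or}} \underline{\mathbf m}_k^{\max}}/V(\lambda)_{<_{\mathrm{or}}\underline{\mathbf m}_k^{\max}}$ is one-dimensional, i.e.\ $\underline{\mathbf m}_k^{\max}$ is essential.

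The main obstacle is the claim $E_{\beta_k} v = 0$ in the inductive step. There are two natural routes. One is Lie-theoretic bookkeeping: write $E_{\beta_k}$ in terms of $\dot w^{-1}$-conjugates and commute it past the product $F_{\beta_{k+1}}^{m_{k+1}^{\max}}\cdots F_{\beta_N}^{m_N^{\max}}$, using that $\beta_k \succ \beta_j$ or incomparability controls whether brackets $[E_{\beta_k}, F_{\beta_j}]$ land in $\mathfrak n^-$ or kill $v_\lambda$ — this requires care about which root strings occur. The cleaner route, which I would follow, is to pass through Demazure modules: the vector $F_{\beta_k}^{m_k^{\max}}\cdots F_{\beta_N}^{m_N^{\max}} v_\lambda$ should be (up to scalar) the extremal weight vector $v_{s_{i_N}\cdots s_{i_k}(\lambda)}$, and the chain $\underline{\mathbf m}_N^{\max}, \underline{\mathbf m}_{N-1}^{\max},\ldots$ realizes the standard monomial path from $v_\lambda$ to $v_{w_0\lambda}$ along the reduced word; that each successive $F_{\beta_k}^{m_k^{\max}}$ is exactly the "Demazure operator power" killing the highest weight of the next $\mathfrak{sl}_2$-string is precisely the content of the theory of Demazure modules generated by reduced words, and annihilation by $E_{\beta_k}$ is then automatic because $v$ is an extremal (hence $B$- or $B^-$-singular along the appropriate root) weight vector. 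I would cite the relevant statement from \cite{FFL} (or the standard reference on Demazure modules) for this, reducing Lemma \ref{lemma23} to an essentially formal verification.
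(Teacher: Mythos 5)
Your handling of parts (1) and (2) is essentially the paper's argument: by induction $F_{\beta_{k+1}}^{m_{k+1}^{\max}}\cdots F_{\beta_N}^{m_N^{\max}}v_\lambda$ is a nonzero vector of the extremal weight $s_{\beta_{k+1}}\cdots s_{\beta_N}(\lambda)=s_{i_N}\cdots s_{i_{k+1}}(\lambda)$, the telescoping computation gives $\langle s_{\beta_{k+1}}\cdots s_{\beta_N}(\lambda),\beta_k^\vee\rangle=\langle\lambda,\alpha_{i_k}^\vee\rangle>0$, and since an extremal weight vector is either a highest or a lowest weight vector for $\mathfrak{sl}_2(\beta_k)$, positivity forces it to be highest, so $E_{\beta_k}$ kills it and $\mathfrak{sl}_2$-theory yields (1) and (2); no Demazure-module machinery is needed beyond this standard fact, and your ``key claim'' is not really an obstacle.

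The genuine gap is in part (3), where your logic is inverted. Essentiality of $\underline{\mathbf m}_k^{\max}$ means $F^{\underline{\mathbf m}_k^{\max}}v_\lambda\notin V(\lambda)_{<_{\mathrm{or}}\underline{\mathbf m}_k^{\max}}$, i.e.\ it is not a linear combination of the vectors $F^{\underline{\mathbf k}}v_\lambda$ with $\underline{\mathbf k}<_{\mathrm{or}}\underline{\mathbf m}_k^{\max}$; thus the \emph{dangerous} tuples are precisely the $<_{\mathrm{or}}$-smaller ones of the same weight. You assert that every other monomial of that weight is strictly $<_{\mathrm{or}}$-smaller and conclude that the subquotient is one-dimensional, but that inference does not follow: the subquotient is at most one-dimensional automatically, and pushing all same-weight competitors below $\underline{\mathbf m}_k^{\max}$ is exactly the scenario in which essentiality could fail. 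Indeed, since $s_{\beta_k}\cdots s_{\beta_N}(\lambda)$ is extremal its weight space is one-dimensional, so a single tuple $\underline{\mathbf k}<_{\mathrm{or}}\underline{\mathbf m}_k^{\max}$ of that weight with $F^{\underline{\mathbf k}}v_\lambda\neq 0$ would put $F^{\underline{\mathbf m}_k^{\max}}v_\lambda$ inside $V(\lambda)_{<_{\mathrm{or}}\underline{\mathbf m}_k^{\max}}$ and destroy essentiality. What has to be proved — and what the paper proves — is a \emph{vanishing} statement: if $\underline{\mathbf k}<_{\mathrm{or}}\underline{\mathbf m}_k^{\max}$ and $F^{\underline{\mathbf k}}v_\lambda\neq 0$, then the entries of $\underline{\mathbf k}$ in positions $k,\ldots,N$ must coincide with those of $\underline{\mathbf m}_k^{\max}$ (otherwise, at the rightmost disagreement $s\geq k$ one has $k_s>m_s^{\max}$ while the entries to the right agree, and by (1)--(2) the vector $F_{\beta_{s+1}}^{m_{s+1}^{\max}}\cdots F_{\beta_N}^{m_N^{\max}}v_\lambda$ is an $\mathfrak{sl}_2(\beta_s)$-highest weight vector of weight pairing $m_s^{\max}$ with $\beta_s^\vee$, so $F_{\beta_s}^{k_s}$ annihilates it); hence some entry among the first $k-1$ coordinates of $\underline{\mathbf k}$ is positive and the weight of $F^{\underline{\mathbf k}}v_\lambda$ is strictly lower than that of $F^{\underline{\mathbf m}_k^{\max}}v_\lambda$. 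So no $<_{\mathrm{or}}$-smaller tuple of the same weight contributes a nonzero vector, and essentiality follows. Your auxiliary convexity claim (that any same-weight monomial must involve some $\beta_j$ with $j<k$ and is therefore $<_{\mathrm{or}}$-smaller) is also unsubstantiated — the order is decided by the rightmost differing entry, not by the presence of small-index factors — but once the vanishing argument is in place it is not needed.
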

\begin{proof}
The proofs of $(1)$ and $(2)$ are executed by descending induction. Note that for $k=N$,
$\beta_N$ is a simple root. Since $\lambda$ is a regular dominant weight, $\mathfrak{sl}_2$-theory implies
$(1)$ and $(2)$.

Suppose now $k<N$ and the claims hold
for $k+1$. Then $F_{\beta_{k+1}}^{m_{k+1}^{\max}}\cdots F_{\beta_N}^{m_N^{\max}}v_\lambda$ is an
extremal weight vector, hence is either a highest or a lowest weight vector for the subalgebra $\mathfrak{sl}_2(\beta_k)$ 
of $\mathfrak g$ generated by the root vectors $F_{\beta_k},E_{\beta_k}$ and $\beta^\vee_k$. Recall from  \eqref{decomplistb} that
$\beta_k=s_{i_N}\cdots s_{i_{k+1}}(\alpha_{i_k})$. Since
 $$
 \begin{array}{rcl}
 \langle s_{\beta_{k+1}}\cdots s_{\beta_N}(\lambda),\beta_k^\vee\rangle
 &=& \langle s_{i_N}\cdots s_{i_{k+1}}(\lambda), s_{i_N}\cdots s_{i_{k+1}}(\alpha_{i_k}^\vee)\rangle\\
 &=&\langle \lambda,\alpha_{i_k}^\vee\rangle
 \end{array}
 $$
is strictly positive, $F_{\beta_{k+1}}^{m_{k+1}^{\max}}\cdots F_{\beta_N}^{m_N^{\max}}v_\lambda$ is a highest weight vector
for $\mathfrak{sl}_2(\beta_k)$. It follows that $m_{k}^{\max}=\langle \lambda,\alpha^\vee_{i_k}\rangle>0$,
and $F_{\beta_{k}}^{m_{k}^{\max}}\cdots F_{\beta_N}^{m_N^{\max}}v_\lambda$ is a weight
vector of weight $s_{\beta_k}\cdots s_{\beta_N}(\lambda)$, which proves the claim by induction.

To prove $(3)$, notice that for a tuple $\underline{\mathbf m}<_{or} \underline{\mathbf m}_k^{\max}$
one has only two possibilities: either $F^{ \underline{\mathbf m}}v_\lambda=0$, or the weight
of $F^{ \underline{\mathbf m}}v_\lambda$ is different from the weight of $F^{ \underline{\mathbf m}_k^{\max}}v_\lambda$. Indeed, $F^{ \underline{\mathbf m}}v_\lambda\neq 0$ and $\underline{\mathbf m}<_{or} \underline{\mathbf m}_k^{\max}$ imply that the last $N-k+1$ coordinates of $\mathbf{m}$ and $\underline{\mathbf m}_k^{\max}$ coincide. Now $m_1^{\max}=\cdots=m_{k-1}^{\max}=0$ and some of the first $k-1$ entries of $\underline{\mathbf m}$ are positive (as $\underline{\mathbf m}<_{or} \underline{\mathbf m}_k^{\max}$), so the weights of $F^{ \underline{\mathbf m}_k^{\max}}v_\lambda$ and $F^{ \underline{\mathbf m}}v_\lambda$ differ by a positive linear combination of positive weights.
\par
This implies that $F^{ \underline{\mathbf m}_k^{\max}}v_\lambda$ is linearly independent of the $F^{ \underline{\mathbf m}}v_\lambda$
with $\underline{\mathbf m}<_{or} \underline{\mathbf m}_k^{\max}$, and hence 
$\underline{\mathbf m}_k^{\max}$ is essential for $V(\lambda)$. 

\end{proof}

Denote by $\mathbf{e}_{i,N}\in\mathbb R^N$ the element $\mathbf{e}_{i,N}=\sum_{j=i}^N \mathbf{e}_{j}\in\mathbb R^N$. 
Recall that $\rho$ is the sum of all fundamental weights. Lemma~\ref{lemma23} implies that the tuples $\mathbf{e}_{i,N}$
are elements of $\es(\rho)\subset \Delta_\rho$. Set
$$
\mathfrak S_{\rho}^N=\text{Convex hull of\ }\{0,\mathbf{e}_{1,N} ,\mathbf{e}_{2,N}\ldots,\mathbf{e}_{N,N}\}\subseteq \Delta_\rho\subset \mathbb R^N.
$$
The polytope $\mathfrak S_{\rho}^N$ is obviously unimodularly equivalent to the closure of $\mathfrak S^N(1)$. 

\begin{theorem}\label{main1}
Let $\lambda$ be a regular dominant integral weight and $k=\min\{\langle\lambda,\beta^\vee\rangle;\beta\in \Phi^+\}$.
Fix a reduced decomposition $w_0=s_{i_1}\cdots s_{i_N}$. Let $\Gamma_\lambda$ be the associated
finitely generated monoid and denote by $\Delta_\lambda\subset \mathbb R^N$ the associated Newton-Okounkov polytope. Then  
$k\mathfrak S_{\rho}^N\subset \Delta_\lambda$.
\end{theorem}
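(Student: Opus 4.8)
The plan is to follow the template of the proof of Theorem~\ref{Prop:goodordering}, reducing the general regular case to the base case $\lambda=\rho$, which is precisely the content of Lemma~\ref{lemma23} and the remark following it.

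First I would note that $\nu:=\lambda-k\rho$ is again a dominant integral weight. Indeed, since the simple coroots are among all coroots, $k\le\langle\lambda,\alpha_i^\vee\rangle$ for every simple root $\alpha_i$, so $\langle\nu,\alpha_i^\vee\rangle=\langle\lambda,\alpha_i^\vee\rangle-k\ge 0$; and because $\lambda$ is regular we have $P=B$, so the support condition $\supp(\nu)\subseteq\supp(\lambda)$ is automatic (in fact $k$ is the largest integer for which $\lambda-k\rho$ remains dominant).

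Next, applying Lemma~\ref{lemma23} with $\lambda$ replaced by $\rho$ gives $m_\ell^{\max}=\langle\rho,\alpha_{i_\ell}^\vee\rangle=1$ for all $\ell$, hence $\underline{\mathbf m}_k^{\max}=\mathbf{e}_{k,N}$, so that $\mathbf{e}_{1,N},\dots,\mathbf{e}_{N,N}\in\es(\rho)$; moreover the zero tuple is essential for every dominant weight. I would then feed this into Theorem~\ref{global} (the monoid property of the essential tuples): iterating the Minkowski sum shows $k\mathbf{e}_{i,N}\in\es(k\rho)$ for each $i$, and adding the pair $(\nu,\mathbf 0)$ gives $k\mathbf{e}_{i,N}=k\mathbf{e}_{i,N}+\mathbf 0\in\es(k\rho+\nu)=\es(\lambda)$; the same argument with the zero tuple throughout yields $\mathbf 0\in\es(\lambda)$.

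Finally, by Theorem~\ref{gammagleichess} the tuples in $\es(\lambda)$ are exactly the level-$1$ part of $\Gamma_\lambda$, so the points $\mathbf 0,k\mathbf{e}_{1,N},\dots,k\mathbf{e}_{N,N}$ all lie in $\Delta_\lambda$, and convexity of $\Delta_\lambda$ yields that their convex hull $k\mathfrak S_\rho^N$ is contained in $\Delta_\lambda$, which is the claim. I do not anticipate a real obstacle here: the technical heart is Lemma~\ref{lemma23} (already established) and the monoid results of \cite{FFL} recalled in Section~\ref{essmon}. The only points needing care are verifying that $\lambda-k\rho$ is dominant — this is where the definition of $k$ as a minimum over all positive coroots enters — and noting that in the regular-weight setting the support hypothesis of Theorem~\ref{global} is vacuous, so the monoid there may be applied freely.
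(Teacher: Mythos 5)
Your proposal is correct and follows essentially the same route as the paper: decompose $\lambda=k\rho+\nu$ with $\nu$ dominant, use Lemma~\ref{lemma23} together with the monoid property of Theorem~\ref{global} to place $\mathbf 0,k\mathbf{e}_{1,N},\dots,k\mathbf{e}_{N,N}$ in $\es(\lambda)=\Gamma_\lambda(1)$, and conclude by convexity of $\Delta_\lambda$. The only cosmetic difference is that you apply Lemma~\ref{lemma23} to $\rho$ and scale by Minkowski sums, whereas the paper applies it directly to $k\rho$; both are equally valid.
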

\begin{proof}   
Recall that $\lambda=k\rho+\mu$ for some dominant integral  weight $\mu$, which implies that $\es(k\rho)\subset \es(\lambda)=\Gamma_{\lambda}(1)$
by Theorem~\ref{global}. Now Lemma~\ref{lemma23} implies that $k\mathbf{e}_{1,N} ,k\mathbf{e}_{2,N}\ldots,k\mathbf{e}_{N,N}\in \es(k\rho)$
and hence:
$$
\{0, k\mathbf{e}_{1,N} ,k\mathbf{e}_{2,N}\ldots,k\mathbf{e}_{N,N}\}\subset \es(\lambda)=\Gamma_{\lambda}(1)\subset \Delta_\lambda.
$$
The convexity of  $\Delta_\lambda$ implies that the convex hull of these points is contained in $\Delta_\lambda$,
and hence $k\mathfrak S_{\rho}^N\subseteq \Delta_\lambda$. 
\end{proof}

\subsection{A special simplex in $\Delta_\lambda$ from cominuscule telescopes}\label{cominuscule telescopes}
\subsubsection{A tower of Levi subalgebras}
Another approach to construct Newton-Okounkov bodies  $\Delta_\lambda=\Delta_\lambda(\underline{\beta})$ containing particular simplices uses a tower of Levi subalgebras
and is in this sense in the spirit of Thimm's trick \cite{T}.
We assume in the following that $\lambda$ is a {\it regular dominant integral weight}. 
Let  $\alpha_1,\alpha_2,\ldots,\alpha_n$ be an enumeration of the simple roots of $\mathfrak g$, and $\varpi_1,\ldots,\varpi_n$ be the associated fundamental weights.
Let $\mathfrak l_j\subset \mathfrak g$ be the Levi subalgebra 
associated to the subset of simple roots $\{\alpha_1,\alpha_2,\ldots,\alpha_j\}$.
The enumeration induces an increasing sequence of Levi subalgebras
\begin{equation}
\label{Levi}
\mathfrak l_0=\mathfrak t\subset \mathfrak l_1\subset\mathfrak l_2\subset\ldots  \subset\mathfrak l_n=\mathfrak g.
\end{equation}
Set $\mathfrak n^-_j=\mathfrak n^-\cap \mathfrak l_j$, then we have an induced sequence of inclusions:
\begin{equation}
\label{nilLevi}
\mathfrak n^-_0=0\subset \mathfrak n^-_1\subset\mathfrak n^-_2\subset\ldots  \subset\mathfrak n^-_n=\mathfrak n^-.
\end{equation} 
Let $\Phi(\mathfrak l_j)^+$ be the set of positive roots of the Levi subalgebra $\mathfrak l_j$.
For $j=1,\ldots,n$, set $\Phi_j^+=\Phi(\mathfrak l_j)^+\setminus\Phi(\mathfrak l_{j-1})^+$, then \eqref{nilLevi} induces a partition of the set of positive roots $\Phi^+$:
\begin{equation}
\label{roots}
\Phi^+=\Phi_1^+ \sqcup \Phi_2^+ \sqcup \cdots \sqcup  \Phi_n^+.
\end{equation}
We fix now an enumeration of the set of positive roots which is compatible with the partition above. More precisely,
there exist $N= i_1>i_{2} >\ldots > i_{n-1}>i_n=1$ such that  for any $1\leq j\leq N$:
\begin{equation}\label{enumlevi}
\Phi_1^+ \sqcup \Phi_2^+ \sqcup \cdots \sqcup  \Phi_j^+=\{\beta_{i_j},\ldots,\beta_N\}.
\end{equation}
In the following, we take the PBW basis of the enveloping algebra $U(\mathfrak n^-)$ 
with respect to this enumeration. As in the sections before,
from now on we fix as a total order the {\it opposite right lexicographic order} on the tuples and on the monomials.
\subsubsection{Levi subalgebras and essential monomials for $V(\varpi_j)$}
We will use the sequence in \eqref{Levi} to provide an inductive procedure to construct essential monomials. 
The fundamental weight $\varpi_j$ can be viewed as a fundamental weight
for $\mathfrak g$, as well as a fundamental weight for the Levi subalgebra $\mathfrak l_j$. We write
$V(\varpi_j)$ for the irreducible $\mathfrak g$-representation and $\mathcal V(\varpi_j)$ for the irreducible
$\mathfrak l_j$-representation. By fixing a highest weight vector $v_{\varpi_j}$ in $V(\varpi_j)$, we may identify
$\mathcal V(\varpi_j)$ with the cyclic $\mathfrak l_j$-submodule of $V(\varpi_j)$ generated by the highest weight vector 
$v_{\varpi_j}$.

The filtration of $U(\mathfrak n^-)$ defined in Section~\ref{afiltration} also makes  sense for the enveloping algebra 
$U(\mathfrak n_j^-)$. To be able to compare the filtrations of the algebras and the induced filtrations 
on the representations $V(\varpi_j)$ and $\mathcal V(\varpi_j)$, recall that
there exists a number $i_j$ (see \eqref{enumlevi}) such that $\Phi(\mathfrak l_j)^+=\{\beta_{i_j},\ldots,\beta_N\}$. In the following we take the
PBW-basis of $U(\mathfrak n_j^-)$ with respect to this enumeration, which are monomials of the form
$F_{\beta_{i_j}}^{m_{i_j}}\cdots F_{\beta_N}^{m_N}$.
We can thus identify
the enveloping algebra $U(\mathfrak n_j^-)$ with the linear span of all ordered monomials $F^{\mathbf{\underline m}}$
in  $U(\mathfrak n^-)$ such that 
\begin{equation}\label{leviindex}
m_1=m_2=\ldots=m_{i_j-1}=0.
\end{equation}

If $\underline{\mathbf m}$ is as in \eqref{leviindex}, the monomial $F^{\mathbf{\underline m}}$ is an element of both $U(\mathfrak n^-_j)$
and $U(\mathfrak n^-)$, giving two notions of being essential: ${\mathbf{\underline m}}$ can be essential for either
the $\mathfrak l_j$-representation $\mathcal V(\varpi_j)$, or the $\mathfrak g$-representation 
$V(\varpi_j)$. 

\begin{lemma}\label{induc1}
If $\underline{\mathbf m}$ is as in \eqref{leviindex}, then ${\mathbf{\underline m}}$
is   essential   for the $\mathfrak l_j$-representation $\mathcal V(\varpi_j)$ if and only if it is so 
for the $\mathfrak g$-representation $V(\varpi_j)$.
\end{lemma}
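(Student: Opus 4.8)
The plan is to rewrite both notions of essentiality as statements about linear independence and then compare the two ambient sets of competitors. Recall that $\underline{\mathbf m}$ is essential for $V(\varpi_j)$ (respectively for $\mathcal V(\varpi_j)$) exactly when $F^{\underline{\mathbf m}}v_{\varpi_j}\neq 0$ and $F^{\underline{\mathbf m}}v_{\varpi_j}$ does not lie in the span of the vectors $F^{\underline{\mathbf k}}v_{\varpi_j}$ with $\underline{\mathbf k}<_{\mathrm{or}}\underline{\mathbf m}$, where $\underline{\mathbf k}$ runs through all of $\mathbb N^N$ in the first case and only through tuples satisfying \eqref{leviindex} in the second. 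Since $\underline{\mathbf m}$ itself satisfies \eqref{leviindex}, and since any competitor $\underline{\mathbf k}$ for $\mathcal V(\varpi_j)$ also has its first $i_j-1$ coordinates equal to zero, comparing $\underline{\mathbf m}$ with such $\underline{\mathbf k}$ under $<_{\mathrm{or}}$ only involves the coordinates $i_j,\ldots,N$; hence this restriction coincides with the opposite right lexicographic order used to define essentiality for $\mathcal V(\varpi_j)$. Moreover $F^{\underline{\mathbf m}}\in U(\mathfrak n_j^-)$, so $F^{\underline{\mathbf m}}v_{\varpi_j}\in\mathcal V(\varpi_j)$. Consequently the set of competitors for $\mathcal V(\varpi_j)$ is a subset of the set of competitors for $V(\varpi_j)$, which immediately gives the implication ``essential for $V(\varpi_j)$ $\Rightarrow$ essential for $\mathcal V(\varpi_j)$''.

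For the reverse implication I would separate the competitors by their $\mathfrak t$-weight. Assume $\underline{\mathbf m}$ satisfies \eqref{leviindex} and $F^{\underline{\mathbf m}}v_{\varpi_j}\neq 0$; then its weight is $\mu_0=\varpi_j-\sum_{\ell\geq i_j}m_\ell\beta_\ell$, and since each $\beta_\ell$ with $\ell\geq i_j$ lies in $\Phi(\mathfrak l_j)^+$, the weight $\mu_0$ equals $\varpi_j$ minus a $\mathbb Z_{\geq 0}$-combination of the simple roots $\alpha_1,\ldots,\alpha_j$ alone, i.e.\ $\mu_0$ is a weight of $\mathcal V(\varpi_j)$. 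The key claim is: if $\underline{\mathbf k}\in\mathbb N^N$ does not satisfy \eqref{leviindex} and $F^{\underline{\mathbf k}}v_{\varpi_j}\neq 0$, then the weight of $F^{\underline{\mathbf k}}v_{\varpi_j}$ is not a weight of $\mathcal V(\varpi_j)$. Indeed, pick $\ell_0<i_j$ with $k_{\ell_0}>0$; by \eqref{enumlevi} we have $\beta_{\ell_0}\notin\Phi(\mathfrak l_j)^+$, so by the partition \eqref{roots} and the standard description of the root system of a Levi subalgebra, $\beta_{\ell_0}$ has a strictly positive coefficient on some simple root $\alpha_{m_0}$ with $m_0>j$. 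As all $\beta_\ell$ are positive roots and all $k_\ell\geq 0$, there is no cancellation, so in $\varpi_j-\mathrm{wt}(F^{\underline{\mathbf k}}v_{\varpi_j})=\sum_\ell k_\ell\beta_\ell$ the coefficient of $\alpha_{m_0}$ is strictly positive; but every weight of $\mathcal V(\varpi_j)$ differs from $\varpi_j$ only by $\alpha_1,\ldots,\alpha_j$, which proves the claim.

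Granting the claim, suppose $\underline{\mathbf m}$ is essential for $\mathcal V(\varpi_j)$ and, for contradiction, that $F^{\underline{\mathbf m}}v_{\varpi_j}=\sum_{\underline{\mathbf k}<_{\mathrm{or}}\underline{\mathbf m}}c_{\underline{\mathbf k}}\,F^{\underline{\mathbf k}}v_{\varpi_j}$ in $V(\varpi_j)$. Projecting onto the $\mu_0$-weight space leaves the left-hand side unchanged and discards on the right every term with $F^{\underline{\mathbf k}}v_{\varpi_j}=0$ as well as, by the claim, every term with $\underline{\mathbf k}$ not satisfying \eqref{leviindex}. Hence $F^{\underline{\mathbf m}}v_{\varpi_j}$ is expressed as a linear combination of vectors $F^{\underline{\mathbf k}}v_{\varpi_j}$ with $\underline{\mathbf k}<_{\mathrm{or}}\underline{\mathbf m}$ satisfying \eqref{leviindex}, i.e.\ as an element of $\mathcal V(\varpi_j)_{<_{\mathrm{or}}\underline{\mathbf m}}$, contradicting essentiality for $\mathcal V(\varpi_j)$. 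Therefore no such relation exists and $\underline{\mathbf m}$ is essential for $V(\varpi_j)$. The main point of the argument is the weight-separation claim; the only real obstacle beyond it is bookkeeping — verifying that $<_{\mathrm{or}}$ restricts correctly to tuples of the form \eqref{leviindex} so that ``competitor for $\mathcal V(\varpi_j)$'' literally means ``competitor for $V(\varpi_j)$ satisfying \eqref{leviindex}'', and that the weight $\mu_0$ singles out exactly these tuples — and both follow once the partition \eqref{roots}, \eqref{enumlevi} is used.
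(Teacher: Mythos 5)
Your proof is correct and follows essentially the same route as the paper: the nontrivial direction is handled by a weight argument showing that any PBW monomial competing with $F^{\underline{\mathbf m}}$ in a relation for $F^{\underline{\mathbf m}}v_{\varpi_j}$ can only involve roots supported on $\alpha_1,\ldots,\alpha_j$, hence lies in $U(\mathfrak n_j^-)$, so non-essentiality descends to $\mathcal V(\varpi_j)$. You merely make explicit what the paper compresses into ``for weight reasons'' (the projection onto the $\mu_0$-weight space) and spell out the easy inclusion of competitor sets and the compatibility of $<_{\mathrm{or}}$ with tuples satisfying \eqref{leviindex}.
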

\begin{proof}
Suppose that the tuples ${\mathbf{\underline a}^1},\ldots,{\mathbf{\underline a}^r}\in\mathbb N^N$ are essential
for $\varpi_j$, satisfying $\mathbf{\underline a}^k < \mathbf{\underline m}$ for $k=1,\ldots,r$ and
\begin{equation}
\label{sum}
F^{\mathbf{\underline m}} v_{\varpi_j}=\sum_{k=1}^r c_kF^{\mathbf{\underline a}^k} v_{\varpi_j}.
\end{equation}
All positive roots such that the root vector $F_\gamma$ occurs in  $F^{\mathbf{\underline m}}$ are
elements in $\Phi(\mathfrak l_j)^+$ by assumption, and hence these roots are linear combinations of the simple roots $\alpha_1,\ldots,\alpha_j$. For weight reasons, all positive roots $\gamma$ such that the root vector $F_\gamma$ occurs in one of the monomials
on the right hand side in \eqref{sum} must also be linear combinations of the simple roots $\alpha_1,\ldots,\alpha_j$. 
Hence the monomials occurring on the right hand side are elements of the enveloping algebra of $\mathfrak l_j$.
This implies that in this special situation being essential for $\mathcal V(\varpi_j)$ is equivalent to being essential for $V(\varpi_j)$.
\end{proof}

\subsubsection{Some special essential monomials for cominuscule weights}
\begin{lemma}\label{special1}
If $\varpi_j$ is a cominuscule weight for $\mathfrak l_j$, then all root vectors $F_\beta$, $\beta\in  \Phi^+_j$,
are essential monomials for the $\mathfrak g$-representation $V(\varpi_j)$.
\end{lemma}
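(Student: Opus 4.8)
The plan is to reduce the assertion (which concerns the $\mathfrak g$-module $V(\varpi_j)$) to the analogous assertion for the $\mathfrak l_j$-module $\mathcal V(\varpi_j)$ by means of Lemma~\ref{induc1}, and then to prove the latter by showing that, for $\beta=\beta_{i_0}\in\Phi^+_j$, the tuple $\mathbf{e}_{i_0}$ is the \emph{only} $\underline{\mathbf{k}}\in\mathbb N^N$ with $F^{\underline{\mathbf{k}}}v_{\varpi_j}\ne 0$ of weight $\varpi_j-\beta$. Two standard properties of a cominuscule $\varpi_j$ will be used throughout. First, the positive roots of $\mathfrak l_j$ orthogonal to $\varpi_j$ are exactly those of $\Phi(\mathfrak l_{j-1})^+$ (the simple roots orthogonal to $\varpi_j$ being $\alpha_1,\ldots,\alpha_{j-1}$), so $\langle\varpi_j,\gamma^\vee\rangle>0$ for $\gamma\in\Phi^+_j$ and $\langle\varpi_j,\gamma^\vee\rangle=0$ for $\gamma\in\Phi(\mathfrak l_{j-1})^+$. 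Second, cominuscularity of $\varpi_j$ for $\mathfrak l_j$ means that $\alpha_j$ occurs with coefficient $1$ in the highest root of $\mathfrak l_j$, hence with coefficient exactly $1$ in every root of $\Phi^+_j$ and with coefficient $0$ in every root of $\Phi(\mathfrak l_{j-1})^+$.

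First I would fix $\beta\in\Phi^+_j$ and write $\beta=\beta_{i_0}$. Since $\Phi^+_j=\{\beta_{i_j},\ldots,\beta_{i_{j-1}-1}\}$ by \eqref{enumlevi}, the index $i_0$ satisfies $i_0\ge i_j$, so the tuple $\mathbf{e}_{i_0}$ meets the condition \eqref{leviindex}; by Lemma~\ref{induc1} it is then enough to prove that $\mathbf{e}_{i_0}$ is essential for the $\mathfrak l_j$-module $\mathcal V(\varpi_j)$. From $\langle\varpi_j,\beta^\vee\rangle>0$ we get $F_\beta v_{\varpi_j}\ne 0$, a weight vector of weight $\varpi_j-\beta$. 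Hence essentiality of $\mathbf{e}_{i_0}$ will follow once we know that the only $\underline{\mathbf{k}}$ with $F^{\underline{\mathbf{k}}}v_{\varpi_j}\ne 0$ of weight $\varpi_j-\beta$ is $\mathbf{e}_{i_0}$ itself: indeed, the subspace spanned by the $F^{\underline{\mathbf{k}}}v_{\varpi_j}$ with $\underline{\mathbf{k}}<_{\mathrm{or}}\mathbf{e}_{i_0}$ is then contained in the sum of the weight spaces of $\mathcal V(\varpi_j)$ of weight different from $\varpi_j-\beta$, so it cannot contain the nonzero vector $F_\beta v_{\varpi_j}$.

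The heart of the argument is this uniqueness. Suppose $F^{\underline{\mathbf{k}}}v_{\varpi_j}\ne 0$ has weight $\varpi_j-\beta$, so $\sum_l k_l\beta_l=\beta$; since we work inside $\mathcal V(\varpi_j)$, every $\beta_l$ with $k_l\ne 0$ lies in $\Phi(\mathfrak l_j)^+=\Phi^+_j\sqcup\Phi(\mathfrak l_{j-1})^+$. Comparing the coefficient of $\alpha_j$ on the two sides of $\sum_l k_l\beta_l=\beta$ and using the cominuscule facts above (coefficient $1$ on roots of $\Phi^+_j$, coefficient $0$ on roots of $\Phi(\mathfrak l_{j-1})^+$, and coefficient $1$ on $\beta$), I get $\sum_{l:\,\beta_l\in\Phi^+_j}k_l=1$. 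Thus exactly one index $l_0$ with $\beta_{l_0}\in\Phi^+_j$ occurs, with $k_{l_0}=1$, and any further $\beta_l$ occurring lies in $\Phi(\mathfrak l_{j-1})^+$. But by \eqref{enumlevi} the roots of $\Phi^+_j$ have indices $\le i_{j-1}-1$ whereas those of $\Phi(\mathfrak l_{j-1})^+$ have indices $\ge i_{j-1}$; so if some $\beta_l\in\Phi(\mathfrak l_{j-1})^+$ occurred, the largest index $l^{*}$ with $k_{l^{*}}\ne 0$ would index a root of $\Phi(\mathfrak l_{j-1})^+$, and then $F^{\underline{\mathbf{k}}}v_{\varpi_j}$ would be obtained by applying the remaining (lower-indexed) root operators to $F_{\beta_{l^{*}}}^{k_{l^{*}}}v_{\varpi_j}$, which is $0$ because $F_{\beta_{l^{*}}}v_{\varpi_j}=0$ (an $\mathfrak{sl}_2$-string argument through the highest weight vector, using $\langle\varpi_j,\beta_{l^{*}}^\vee\rangle=0$). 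This contradicts $F^{\underline{\mathbf{k}}}v_{\varpi_j}\ne 0$. Hence $\underline{\mathbf{k}}$ is supported only at $l_0$ with $k_{l_0}=1$, and $\beta_{l_0}=\beta$ forces $l_0=i_0$, i.e. $\underline{\mathbf{k}}=\mathbf{e}_{i_0}$, completing the argument.

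I expect the only genuinely delicate point to be the coefficient count in the previous paragraph: one must pair $\sum_l k_l\beta_l=\beta$ with the fundamental \emph{coweight} $\varpi_j^\vee$ (equivalently, extract the $\alpha_j$-coordinate), \emph{not} with the coroot $\beta^\vee$ --- in non-simply-laced types $\langle\varpi_j,\gamma^\vee\rangle$ can equal $2$ for some $\gamma\in\Phi^+_j$ even though $\varpi_j$ is cominuscule, which would break a naive string-length bound on $k_{l^{*}}$, whereas the $\alpha_j$-coefficient of every root in $\Phi^+_j$ is exactly $1$. Once this is handled, the remainder is bookkeeping with $\mathfrak{sl}_2$-strings through $v_{\varpi_j}$ and with the enumeration \eqref{enumlevi}.
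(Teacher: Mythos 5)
Your proposal is correct and follows essentially the same route as the paper: reduce to the $\mathfrak l_j$-module $\mathcal V(\varpi_j)$ via Lemma~\ref{induc1}, compare $\alpha_j$-coefficients using that every root of $\Phi^+_j$ contains $\alpha_j$ exactly once, and use the enumeration \eqref{enumlevi} together with $F_\gamma v_{\varpi_j}=0$ for $\gamma\in\Phi(\mathfrak l_{j-1})^+$ to kill any competing monomial. Phrasing this as uniqueness of the tuple in the weight space $\varpi_j-\beta$ rather than as the impossibility of a dependence relation is only a cosmetic difference from the paper's argument.
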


\begin{proof}
By Lemma~\ref{induc1} it is sufficient to prove that  $F_\beta$ is an essential monomial for the 
$\mathfrak l_j$-representation $\mathcal V(\varpi_j)$. Let $\mathbf{\underline m}_\beta\in \mathbb N^N$
be such that  $F_\beta=F^{\mathbf{\underline m}_\beta}$. Suppose there exist monomials 
$F^{\mathbf{\underline a}^1},\ldots,F^{\mathbf{\underline a}^r}\in U(\mathfrak n_j^-)$ such that $\mathbf{\underline a}^k < \mathbf{\underline m}_\beta$
for $k=1,\ldots,r$, and
\begin{equation}
\label{sum2}
F^{\mathbf{\underline m}_\beta} v_{\varpi_j}=\sum_{k=1}^r c_k F^{\mathbf{\underline a}^k} v_{\varpi_j}.
\end{equation}
For a monomial $F^{\mathbf{\underline a}^k}$ with nonzero coefficient $c_k$ and a positive root $\gamma_i\in\Phi^+(\mathfrak l_j)$,
let $a_i$ be the exponent of $F_{\gamma_i}$ in the monomial. Comparing the weights on both sides of \eqref{sum2}, the
sum $\sum_{\ell=i_j}^N a_\ell\gamma_\ell$ is equal to $\beta$. Since $\varpi_j$ is assumed to be cominuscule,
the coefficient of $\alpha_j$ in the expression of $\beta$ as a sum of simple roots is equal to one. 
It follows, that at least for one of the roots one has that $a_i>0$ but $\gamma_i\not\in \Phi^+_j$. This implies
$\gamma_i\in \Phi^+(\mathfrak l_{j-1})$, and, because of the fixed order in which the monomials are written, we have
$F^{\mathbf{\underline a}^k}v_{\varpi_j}=0$.
As a consequence, a linear dependence relation as in \eqref{sum2} is not possible.
\end{proof}

\begin{theorem}\label{Thm:cominuscule}
Let $\lambda$ be a regular dominant integral weight and $k=\min\{\langle\lambda,\beta^\vee\rangle;\beta\in \Phi^+\}.$
If $\mathfrak g$ is not of type $\tt G_2, \tt F_4$ or $\tt E_8$, then there exists an enumeration of the positive
roots such that $\Gamma_\lambda$ is finitely generated and $\mathfrak S^N(k)\subset \Delta_\lambda$.
\end{theorem}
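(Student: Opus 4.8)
The plan is to follow the same strategy used in Theorem~\ref{main1} and Theorem~\ref{Prop:goodordering}: exhibit enough essential tuples for $V(\rho)$ (equivalently, points of $\es(\rho)\subset\Delta_\rho$) to span a unimodular copy of $\overline{\mathfrak S^N(1)}$, then scale by $k$ using $\lambda = k\rho+\mu$ together with Theorem~\ref{global}. First I would choose, for each simple Lie type not equal to $\tt G_2,\tt F_4,\tt E_8$, an enumeration $\alpha_1,\ldots,\alpha_n$ of the simple roots such that at every step $j$ the fundamental weight $\varpi_j$ is \emph{cominuscule} for the Levi subalgebra $\mathfrak l_j$ generated by $\alpha_1,\ldots,\alpha_j$. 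This is the combinatorial heart of the argument: one needs, for the chain \eqref{Levi}, that each newly adjoined node is a cominuscule node of the Dynkin diagram on $\{\alpha_1,\ldots,\alpha_j\}$. For type $\tt A_n$ any order along the diagram works; for $\tt B_n,\tt C_n,\tt D_n$ one builds the diagram from the ``classical end''; for $\tt E_6,\tt E_7$ one uses the minuscule/cominuscule node at each stage; and the excluded types $\tt G_2,\tt F_4,\tt E_8$ are precisely those with no cominuscule fundamental weight at all, which is why they are omitted. I would record this as a short lemma or case check, citing the classification of (co)minuscule weights.

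Next, fix the enumeration $\underline{\beta}=\{\beta_1,\ldots,\beta_N\}$ of $\Phi^+$ compatible with the partition \eqref{roots} as in \eqref{enumlevi}, and use the PBW basis and opposite right lexicographic order fixed there. Since the chosen simple-root order makes the enumeration induced (in the sense of \eqref{decomplist}/\eqref{decomplistb}) by a reduced decomposition of $w_0$ adapted to the Levi chain, the monoid $\Gamma_\lambda=\Es(\lambda)$ is finitely generated and saturated by \cite[Corollary 6]{FFL}; this gives the ``finitely generated'' half of the conclusion and, via Theorem~\ref{summary}, a normal limit toric variety. For the simplex, apply Lemma~\ref{special1}: for each $j$ and each $\beta\in\Phi^+_j$, the unit vector $\mathbf e_{\beta}$ (the tuple with a single $1$ in the slot of $\beta$) is essential for $V(\varpi_j)$. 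Because $0$ is always essential and $\supp(\varpi_j)\subseteq\supp(\rho)$, Theorem~\ref{global} lets me add these up over $j=1,\ldots,n$: the tuples $\mathbf e_1,\ldots,\mathbf e_N$ are all essential for $V(\rho)$, hence lie in $\Gamma_\rho(1)$, hence in $\Delta_\rho$. Together with $0\in\Delta_\rho$ and convexity, this yields $\overline{\mathfrak S^N(1)}\subset\Delta_\rho$, in fact the standard closed simplex $\mathrm{Conv}\{0,\mathbf e_1,\ldots,\mathbf e_N\}$.

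Finally I would scale. Writing $\lambda=k\rho+\mu$ with $\mu$ dominant (here one uses that $k=\min\{\langle\lambda,\beta^\vee\rangle;\beta\in\Phi^+\}$ is the largest integer for which $\lambda-k\rho$ is dominant, exactly as in the proof of Theorem~\ref{Prop:goodordering}), Theorem~\ref{global} gives $\es(k\rho)\subseteq\es(\lambda)=\Gamma_\lambda(1)$, and applying it $k$ times to the tuples $\mathbf e_i$ shows $k\mathbf e_1,\ldots,k\mathbf e_N\in\es(k\rho)\subseteq\es(\lambda)$. Hence $\{0,k\mathbf e_1,\ldots,k\mathbf e_N\}\subset\Delta_\lambda$, and convexity of $\Delta_\lambda$ gives $k\,\mathrm{Conv}\{0,\mathbf e_1,\ldots,\mathbf e_N\}\subset\Delta_\lambda$, which contains $\mathfrak S^N(k)$. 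Since the simplex $\mathrm{Conv}\{0,\mathbf e_1,\ldots,\mathbf e_N\}$ sits at the origin as a genuine corner of the polytope (its $N$ edges are the coordinate axes, a $\Z$-basis of $\Z^N$), this is moreover a ``corner'' realization, as needed for Proposition~\ref{prop gw attained} via Corollary~\ref{cor action on smooth pts}. The main obstacle is purely the first step — verifying, type by type, that a cominuscule-at-every-stage ordering of the simple roots exists exactly outside $\tt G_2,\tt F_4,\tt E_8$; once that ordering is in hand, the essential-monomial bookkeeping and the scaling are routine given Lemma~\ref{special1} and Theorem~\ref{global}.
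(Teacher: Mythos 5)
Your proposal is correct and follows essentially the same route as the paper: a type-by-type choice of simple-root ordering making each $\varpi_j$ cominuscule for $\mathfrak l_j$, an enumeration of $\Phi^+$ compatible with the Levi chain and induced by a block reduced decomposition of $w_0$ (giving finite generation via \cite{FFL}), then Lemma~\ref{special1} and Theorem~\ref{global} to show each $k\mathbf e_i$ is essential for $V(\lambda)$ and conclude by convexity. The only detail you leave implicit, which the paper spells out, is the explicit construction of the adapted reduced decomposition via minimal coset representatives $\tau_j$ of $w_0^j$ in $W_{j-1}\backslash W_j$, but this is the same argument in substance.
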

\begin{proof}
By \cite{FFL}, $\Gamma_\lambda=\Es(\lambda)$ is finitely generated if, for example, one can show that there exists
a reduced decomposition of $w_0$ such that the induced ordering has the properties demanded in \eqref{enumlevi}.

\par
If $\mathfrak g$ is not of type $\tt G_2, \tt F_4$ or $\tt E_8$, then
there exists an enumeration $\{\alpha_1,\ldots,\alpha_n\}$ of the simple roots such that for all $j=1,\ldots,n$, the 
fundamental weight $\varpi_j$ is cominuscule for the Levi subalgebra $\mathfrak l_j$, \emph{i.e.,} $\langle\varpi_j,\theta_j^\vee\rangle=1$ for the highest root $\theta_j$ in $\mathfrak{l}_j$.
For $\mathfrak g$ of type $\tt A_{n}$, any enumeration of the simple roots has this property. 
For $\mathfrak g$ of type $\tt C_{n}$
or $\tt D_n$, one takes the standard enumeration as in \cite{Bo}. For $\mathfrak g$ of type $\tt B_{n}$
take $\alpha_1$ to be the only short root among the simple roots, then add successively the only
simple root which is connected in the Dynkin diagram to the already chosen ones. 
For $\mathfrak g$ of type $\tt E_6$ take first the enumeration of the simple roots as in the  $\tt D_5$ case above,
and then add the only missing simple root as the sixth one. For $\mathfrak g$ of type $\tt E_7$ take first the enumeration 
of the simple roots as in the  $\tt E_6$ case above, and then add the only missing simple root as the seventh one. 
\par
To get an enumeration of the positive roots induced by a reduced decomposition
and also satisfying \eqref{enumlevi}, let $w_0$ be the longest word in the Weyl group $W$ of $G$, $w_0^j$ be the longest word in the Weyl group of $\mathfrak l_{j}$, and 
$$
\tau_j\equiv w_0^j\ \ (\bmod\ W_{j-1})
$$
be a minimal representative in $W_j$ of the class of $w_0^j$ in $W_{j-1}\backslash W_j$.
Note that the block decomposition $w^j_0=\tau_1\tau_2\cdots\tau_j$ is such that the lengths add up,
\emph{i.e.}, $\ell(w^j_0)=\sum_{s=1}^j \ell(\tau_s)$ for all $j=1,\ldots,n$.
We fix a reduced decomposition of the longest word $w_0$ which is
compatible with this block decomposition, \emph{i.e.,} the decomposition of the form:
\begin{equation}
\label{decomp}
w_0=
 \underbrace{s_1}_{\tau_1}
 \underbrace{s_2 s_1\cdots}_{\tau_2}\ \cdots
 \ \underbrace{s_{j_1}\cdots s_{j_r}}_{\tau_j}\ \cdots
\ \underbrace{s_{n_1}\cdots s_{n_t}}_{\tau_n}.
\end{equation}
It is now easy to see that the enumeration of the elements of $\Phi^+$ induced by this reduced decomposition
(see \eqref{decomplist}, here $L=T$) has the properties described in \eqref{enumlevi}. 

Lemma~\ref{special1} implies that with respect to this enumeration,
every root vector is essential for some fundamental representation. Now
the same arguments as in the proof of Theorem~\ref{main1} imply
that for any $i=1,\ldots,N$, $\mathbf{e}_i$ is essential for $V(\rho)$. By the same arguments as above, $k$ is maximal such that $\lambda=k\rho +\mu$ for some dominant weight $\mu$,
so $\es(k\rho)\subset \es(\lambda)$ and hence 
$$
\{0,k\mathbf{e}_1,k\mathbf{e}_2,\ldots,k\mathbf{e}_N\}\subset \Delta_\lambda.
$$
Since $\Delta_\lambda$ is convex, it follows that $\mathfrak S^N(k)\subset \Delta_\lambda$.
\end{proof}
\begin{remark}
Recall that by our construction the Newton-Okounkov body is contained in the positive octant, thus 
in this case $\mathfrak S^N(k)$ is the intersection of $\Delta_\lambda$ with an affine half-space. 
Corollary \ref{cor action on smooth pts} implies hence that the supremum appearing in the definition of Gromov width is attained: there exists a symplectic embedding of a ball of capacity $k$.
\end{remark}

We give several examples on the construction in Theorem \ref{Thm:cominuscule}.

\begin{example}
We fix the following notations: for $1\leq i\leq j\leq n$, $\alpha_{i,j}=\alpha_i+\ldots+\alpha_j$; when $\mathfrak{g}$ is of type $\tt B_n$, for $1\leq i<j\leq n$, $\alpha_{i,\overline{j}}=\alpha_i+\ldots+\alpha_{n}+\alpha_n+\ldots+\alpha_j$; when $\mathfrak{g}$ is of type $\tt C_n$, for $1\leq i\leq j\leq n$, $\alpha_{i,\overline{j}}=\alpha_i+\ldots+\alpha_{n-1}+\alpha_{n}+\alpha_{n-1}+\ldots+\alpha_j$.
\begin{enumerate}
\item For $\mathfrak{g}$ of type $\tt A_n$, by construction, we may consider the enumeration of positive roots arising from the following inclusions of Levi subalgebras:
$$\mathfrak{sl}_2\subset \mathfrak{sl}_3\subset\ldots\subset \mathfrak{sl}_n\subset\mathfrak{sl}_{n+1}.$$
For example, when $n=3$, the enumeration can be chosen as 
$$(\alpha_{3,3},\alpha_{2,3},\alpha_{1,3},\underbrace{\alpha_{2,2},\alpha_{1,2},\alpha_{1,1}}_{\tt A_2}).$$
\item For $\mathfrak{g}$ of type $\tt B_n$, we consider the enumeration of positive roots arising from the following inclusions of Levi subalgebras:
$$\mathfrak{sl}_2\subset \mathfrak{so}_5\subset\mathfrak{so}_7\subset\ldots\subset \mathfrak{so}_{2n-1}\subset\mathfrak{so}_{2n+1}.$$
For instance, when $n=3$, the enumeration can be chosen as
$$(\alpha_{1,1},\alpha_{1,2},\alpha_{1,3},\alpha_{1,\overline{3}},\alpha_{1,\overline{2}},\underbrace{\alpha_{2,2},\alpha_{2,3},\alpha_{2,\overline{3}},\alpha_{3,3}}_{\tt B_2}).$$
\item For $\mathfrak{g}$ of type $\tt C_n$, we consider the enumeration of positive roots arising from the following inclusions of Levi subalgebras:
$$\mathfrak{sl}_2\subset \mathfrak{sl}_3\subset\ldots\subset \mathfrak{sl}_{n}\subset\mathfrak{sp}_{2n}.$$
For example, when $n=3$, the enumeration can be chosen as 
$$(\alpha_{3,3},\alpha_{2,3},\alpha_{2,\overline{2}},\alpha_{1,3},\alpha_{1,\overline{2}},\alpha_{1,\overline{1}},\underbrace{\alpha_{2,2},\alpha_{1,2},\alpha_{1,1}}_{\tt A_2}).$$
\item For $\mathfrak{g}$ of type $\tt D_n$, there are different ways to obtain enumerations of positive roots from inclusions of Levi subalgebras. For example,
$$\mathfrak{sl}_2\subset \mathfrak{sl}_3\subset \mathfrak{sl}_4\subset \mathfrak{so}_8\subset\ldots\subset \mathfrak{so}_{2n-2}\subset\mathfrak{so}_{2n}\ \ or\ \ \mathfrak{sl}_2\subset \mathfrak{sl}_3\subset\ldots\subset \mathfrak{sl}_{n}\subset\mathfrak{so}_{2n}.$$

\end{enumerate}
\end{example}

\frenchspacing


\begin{thebibliography}{99}

\bibitem{A} 
D. Anderson, {\it Okounkov bodies and toric degenerations}, Math. Ann. {\bf 356} (2013), 1183--1202.

\bibitem{AB}
V. Alexeev and M. Brion,
{\it Toric degenerations of spherical varieties},
Selecta Math. (N.S.) {\bf 10}, no. 4, (2004), 453--478.

\bibitem{BC} P. Biran and K. Cieliebak, \emph{Symplectic topology on subcritical manifolds}, Comm. Math. Helv. 
{\bf 76 } (2001), 712--753.

\bibitem{BZ}
A. Berenstein and A. Zelevinsky,
{\it Tensor product multiplicities, canonical bases and totally positive varieties},
Invent. Math. {\bf 143} (2001), 77--128.

\bibitem{Bo}
   N. Bourbaki,
   \emph{\'El\'ements de math\'ematique: Groupes et alg\`ebres de Lie, Chapitres 4 \`a 6.}
    Masson, Paris, 1981.


\bibitem{CC}
A. Caviedes Castro, 
\emph{Upper bound for the Gromov width of coadjoint orbits of compact Lie groups},
{J. Lie Theory} {\bf 26} (2016), no. 3, 821--860. 

\bibitem{CLS} D. Cox, J. Little, and H. Schenck, \emph{Toric Varieties}, Graduate Studies in Mathematics, {\bf 124}. 
American Mathemtical Society, Providence, RI, 2011. xxiv+8841pp. ISBN: 978-0-8218-4819-7.


\bibitem{FFL} 
X.~Fang, G.~Fourier, and P.~Littelmann,
\emph{Essential bases and toric degenerations arising from generating sequences}, Adv. Math, Volume 312, 25 May 2017, Pages 107--149.

\bibitem{FFL1}
E.~Feigin, G.~Fourier, and P.~Littelmann, \textit{Favourable modules: Filtrations, polytopes, Newton-Okounkov bodies and flat degenerations},
Transformation Groups, June 2017, Volume 22, Issue 2, pp 321--352.

\bibitem{G}
A. A.~Gornitskii, \emph{Essential signatures and canonical bases of irreducible representations of the group $G_2$}, Math. Notes, {\bf 97} (2015), No.1, 30--41.

\bibitem{G2}
A. A.~Gornitskii, \emph{Essential signatures and canonical bases of irreducible representations of $D_4$}, preprint, 2015,
 arXiv:1507.07498.

\bibitem{HP}
I. Halacheva and M. Pabiniak,
{\it Gromov width of coadjoint orbits of the symplectic group},
preprint 2016, arXiv:1601.02825v1.

\bibitem{I} A. Ito, \emph{Okounkov bodies and Seshadri constants}, Adv. Math. {\bf 241} (2013), 246--262.


\bibitem{HK}
M. Harada and K. Kaveh,
\emph{Integrable systems, toric degenerations and Okounkov
bodies}, Invent. Math. {\bf 202} (2015), no. 3, 927--985.


\bibitem{KT} Y.~Karshon and S.~Tolman, \textit{The Gromov width of complex Grassmannians}, Algebr. Geom. Topol. {\bf 5} (2005), 911--922.

\bibitem{K}
K. Kaveh, \textit{Toric degenerations and symplectic geometry
of projective varieties}, preprint arXiv:1508.00316v3 (2016).


\bibitem{KK} K.~Kaveh, and A. G.~Khovanskii, \emph{Newton-Okounkov bodies, semigroups of integral points, graded algebras
and intersection theory}, Ann. of Math. {\bf 176} (2012), no. 2, 925--978.

\bibitem{KL1}
A. K\"uronya and V. Lozovanu, \emph{Positivity of line bundles and Newton-Okounkov bodies},
to appear in Documenta Mathematica, arXiv:1506.06525v1 (2015).

\bibitem{KL2}
A. K\"uronya and V. Lozovanu, \emph{Infinitesimal Newton-Okounkov bodies and jet separation}, Duke Mathematical Journal, 166 (2017), no. 7, pp. 1349--1376.


\bibitem{L} J. Lane, \textit{A completely integrable system on $G_2$ coadjoint orbits}, preprint arXiv:1605.01676v3 (2016).

\bibitem{Li}
P. Littelmann, \textit{Cones, crystals, and patterns}, Transform. Groups {\bf 3}, No.2, (1998), 145--179.

\bibitem{LuGW} G.~Lu, \emph{Gromov-Witten invariants and pseudo symplectic capacities}, Israel J. Math. {\bf 156} (2006), 1--63.

\bibitem{LuSC} G.~Lu, \emph{Symplectic capacities of toric manifolds and
related results}, Nagoya Math. J. {\bf 181} (2006), 149--184.

\bibitem{LM}
R. K. Lazarsfeld and M. Musta\c{t}\u{a}, \textit{Convex bodies associated to linear series}, Ann. de l' \'{E}NS 42 (2009), 783--835.

\bibitem{LZ} A. Loi and F. Zuddas, \textit{On the Gromov width of homogeneous K\"ahler manifolds}, Differential Geometry and its Applications
Volume 47, August 2016, Pages 130--132.


\bibitem{MP} A. Mandini and M. Pabiniak, \textit{On the Gromov width of polygon spaces}, Transformation Groups (2017). https://doi.org/10.1007/s00031-017-9430-0.

\bibitem{M}
S. Morier-Genoud,
\textit{Rel\`evement g\'eom\'etrique de la base canonique et involution de Sch\"utzenberger}, 
C.R. Acad.Sci. Paris. Ser. I {\bf 337}, 2003, 371--374.

\bibitem{N} D.W. Nystr\"om, \emph{Okounkov bodies and the K\"ahler geometry of projective manifolds}, preprint  	arXiv:1510.00510 (2015).

\bibitem{NNU}
T. Nishinou, Y. Nohara and K. Ueda, \textit{Toric degenerations of Gelfand-Cetlin systems and potential functions}, 
Adv. Math. {\bf 224} (2010), no. 2, 648--706. 

\bibitem{P} M.~Pabiniak, \emph{Gromov width of non-regular
coadjoint orbits of U(n), SO(2n) and SO(2n+1)},  Math. Res. Lett. {\bf 21} (2014), no. 1, 187--205.

\bibitem{Pso} M. Pabiniak, \emph{Lower bounds for Gromov width in the special orthogonal coadjoint orbits}, preprint arXiv:1201.0240v1 (2011).

\bibitem{T} A. Thimm, \emph{Integrable geodesic flows on homogeneous spaces}, Ergodic Theory
and Dynamical Systems, {\bf 1} (1981), 495--517.

\bibitem{Z} M.~Zoghi, \emph{The Gromov width of Coadjoint Orbits of Compact Lie Groups}, PhD Thesis, University of Toronto, 2010.


\end{thebibliography}
\end{document}